\documentclass[12pt]{amsart}
\usepackage{amssymb}
\usepackage{amsfonts}
\usepackage{latexsym}
\usepackage{amscd}
\usepackage[mathscr]{euscript}
\usepackage{xy} \xyoption{all}

\vfuzz2pt 
\hfuzz2pt 


\addtolength{\textwidth}{4cm} \addtolength{\oddsidemargin}{-2cm}
\addtolength{\evensidemargin}{-2cm} \textheight=22.15truecm

\newcommand{\N}{{\mathbb{N}}}

\newcommand{\Z}{{\mathbb{Z}}}

\newcommand{\uloopr}[1]{\ar@'{@+{[0,0]+(-4,5)}@+{[0,0]+(0,10)}@+{[0,0] +(4,5)}}^{#1}}
\newcommand{\uloopd}[1]{\ar@'{@+{[0,0]+(5,4)}@+{[0,0]+(10,0)}@+{[0,0]+ (5,-4)}}^{#1}}
\newcommand{\dloopr}[1]{\ar@'{@+{[0,0]+(-4,-5)}@+{[0,0]+(0,-10)}@+{[0, 0]+(4,-5)}}_{#1}}
\newcommand{\dloopd}[1]{\ar@'{@+{[0,0]+(-5,4)}@+{[0,0]+(-10,0)}@+{[0,0 ]+(-5,-4)}}_{#1}}

\newcommand{\luloop}[1]{\ar@'{@+{[0,0]+(-8,2)}@+{[0,0]+(-10,10)}@+{[0, 0]+(2,2)}}^{#1}}

\newtheorem{lem}{Lemma}[section]
\newtheorem{corol}[lem]{Corollary}
\newtheorem{theor}[lem]{Theorem}
\newtheorem{prop}[lem]{Proposition}

\newtheorem{defi}[lem]{Definition}
\newtheorem{defis}[lem]{Definitions}
\newtheorem{exem}[lem]{Example}



\begin{document}
\title[Flow invariants for Leavitt path algebras]{Flow invariants in the
classification of \\ Leavitt path algebras}
\author{Gene Abrams}
\address{Department of Mathematics, University of Colorado,
Colorado Springs CO 80918 U.S.A.} \email{abrams@math.uccs.edu,  cdsmith@gmail.com}
\author{Adel Louly}\author{Enrique Pardo}
\address{Departamento de Matem\'aticas, Facultad de Ciencias, Universidad de C\'adiz,
Campus de Puerto Real, 11510 Puerto Real (C\'adiz),
Spain.}\email{louly.adel@uca.es,
enrique.pardo@uca.es}
 \author{Christopher Smith}

\thanks{The first author is partially supported by the U.S. National Security Agency under grant number H89230-09-1-0066. The second author was partially supported by a Post-doctoral fellow of the A.E.C.I. (Spain). The
third author was partially supported by the DGI and European Regional Development Fund, jointly,
through Projects MTM2007-60338 and MTM2008-06201-C02-02, by the Consolider Ingenio
``Mathematica" project CSD2006-32 by the MEC, by PAI III grants FQM-298 and P06-FQM-1889 of the Junta de Andaluc\'{\i}a, and by the Comissionat per Universitats i Recerca de la Generalitat de
Catalunya.} \subjclass[2000]{Primary 16D70, Secondary 46L05} \keywords{Leavitt path algebra,
Morita equivalence, Flow equivalence, K-Theory}
%

\maketitle

\begin{abstract}
We analyze in the context of Leavitt path algebras some graph
operations introduced in the context of symbolic dynamics by
Williams, Parry and Sullivan, and Franks. We show that these
operations induce Morita equivalence of the corresponding Leavitt
path algebras. As a consequence we obtain our two main results:  the
first gives sufficient conditions for which the Leavitt path
algebras in a certain class are Morita equivalent, while the second
gives sufficient conditions which yield isomorphisms.    We discuss
a possible approach to establishing whether or not these conditions
are also in fact necessary.  In the final section we present many
additional operations on graphs which preserve Morita equivalence
(resp., isomorphism) of the corresponding Leavitt path algebras.
\end{abstract}

\section*{Introduction}\label{Introduction}

Throughout this article $E$ will denote a row-finite directed graph,
and $K$ will denote an arbitrary field.  The {\it Leavitt path
algebra of $E$ with coefficients in $K$}, denoted $L_K(E)$, has
received significant attention  over the past few years, both from
algebraists as well as from analysts working in operator theory.
(The precise definition of $L_K(E)$ is given below.)   When $K$ is
the field ${\mathbb C}$ of complex numbers, the algebra $L_K(E)$ has
exhibited surprising similarity to C$^*(E)$, the {\it graph}
C$^*$-{\it algebra of} $E$.  In this context, it is natural to ask
whether an analog of the Kirchberg-Phillips Classification Theorem
\cite{Kirch, Phil} for C$^*$-algebras holds for various classes of
Leavitt path algebras as well.  Specifically, the following question
was posed in \cite{AbAnhLouP}:

\bigskip

\textbf{The Classification Question for purely infinite simple
unital Leavitt path algebras:} Let $K$ be a field, and suppose $E$
and $F$ are graphs for which $L_K(E)$ and $L_K(F)$ are purely
infinite simple unital. If $K_0(L_K(E)) \cong K_0(L_K(F))$ via an
isomorphism $\varphi$ having $\varphi([1_{L_K(E)}]) = [1_{L_K(F)}]$, must
$L_K(E)$ and $L_K(F)$ be isomorphic?

\bigskip

The Classification Question is answered in the affirmative in
\cite{AbAnhLouP} for a few specific classes of graphs.  We obtain in
the current article an affirmative answer for a significantly wider
class of graphs.   Our approach is as follows. In Section
\ref{Morita Equiv} we consider Morita equivalence of Leavitt path
algebras.  By applying a deep theorem of Franks \cite{Franks} from
the field of symbolic dynamics, we obtain in Theorem
\ref{Franksinvariantsufficient} a sufficient set of conditions on
$E$ and $F$ which ensure that $L_K(E)$ is Morita equivalent to
$L_K(F)$.  (Ideas from symbolic dynamics were employed in analyzing structures related to Leavitt path algebras in, for instance, \cite{CK}; we describe these more fully below.)  In Section \ref{Isomorphism}, we exploit these Morita
equivalences to obtain sufficient conditions which ensure
isomorphism (Theorem \ref{Th:Almost BigFish}), thereby obtaining the
aforementioned partial affirmative answer to the Classification
Question.

We complete Section \ref{Isomorphism} by examining  the remaining
difficulty in obtaining an affirmative answer to the Classification
Question for {\it all} germane graphs.  In Section \ref{Extra} we
extend several results about Morita equivalence and isomorphism to
certain classes of graphs $E$ for which $L_K(E)$ is not necessarily
purely infinite simple unital, thereby giving more general results
than have been previously known about isomorphism and Morita
equivalence of Leavitt path algebras.

We briefly recall some graph-theoretic definitions and properties;
more complete explanations and descriptions can be found in
\cite{AA1}. A \emph{graph} (synonymously, a \emph{directed graph})
$E=(E^0,E^1,r_E,s_E)$ consists of two sets $E^0,E^1$ and maps $r_E,s_E:E^1
\to E^0$.  The elements of $E^0$ are called \emph{vertices} and the
elements of $E^1$ \emph{edges}. We write $s$ for $s_E$ (resp., $r$ for $r_E$) if the graph $E$ is clear from context.  We emphasize that loops and multiple /
parallel edges are allowed.  If $s^{-1}(v)$ is a finite set for
every $v\in E^0$, then the graph is called \emph{row-finite}.  All
graphs in this paper will be assumed to be row-finite.  A vertex
$v$ for which $s^{-1}(v)$ is empty is called a \emph{sink};  a
vertex $w$ for which $r^{-1}(w)$ is empty is called a \emph{source}.

A \emph{path} $\mu$ in a graph $E$ is either a vertex, or a sequence of edges
$\mu=e_1\dots e_n$ such that $r(e_i)=s(e_{i+1})$ for
$i=1,\dots,n-1$. In the latter case, $s(\mu):=s(e_1)$ is  the \emph{source}
of $\mu$, $r(\mu):=r(e_n)$ is the \emph{range} of $\mu$, and $n$ is
the \emph{length} of $\mu$.  If $\mu = v$ is a vertex, we define $s(v) = r(v)=v$, and define the length of $v$ to be $0$.   An edge $f$ is an {\it exit} for a path
$\mu = e_1 \dots e_n$ if there exists $i$ such that $s(f)=s(e_i)$
and $f \neq e_i$. If $\mu$ is a path in $E$, and if
$v=s(\mu)=r(\mu)$, then $\mu$ is called a \emph{closed path based at
$v$}. If $\mu= e_1 \dots e_n$ is a closed path based at $v = s(\mu)$
and $s(e_i)\neq s(e_j)$ for every $i\neq j$, then $\mu$ is called a
\emph{cycle}.

The following notation is standard.  Let $A$ be a $p\times p$ matrix
having non-negative integer entries (i.e., $A = (a_{ij})\in {\rm
M}_p(\Z^+)$).  The graph $E_A$ is defined by setting
$(E_A)^0=\{v_1,v_2,\ldots,v_p\}$, and defining $(E_A)^1$ by
inserting exactly $a_{ij}$ edges in $E_A$ having source vertex $v_i$
and range vertex $v_j$.  Conversely, if $E$ is a finite graph with
vertices $\{v_1,v_2,...,v_p\}$, then we define the {\it incidence
matrix} $A_E$ {\it of} $E$ by setting $(A_E)_{ij}$ as the number of
edges in $E$ having source vertex $v_i$ and range vertex $v_j$.

Given a graph $E = (E^0,E^1,r,s)$, we define the {\it transpose
graph} $E^t$ to be the graph $(E^0,E^1,s,r)$ with the same vertices
as $E$, but with edges in the opposite direction.  Notice that
$A_{E^t}=(A_E)^t$, and $E_{A^t} = (E_A)^t$, as implied by the
notation.

Our focus in this article is on $L_K(E)$, the Leavitt path algebra
of $E$. We define $L_K(E)$  here, after which we review some
important properties and examples.

\begin{defi}\label{definition}  {\rm Let $E$ be any row-finite graph, and $K$ any field.
The {\em Leavitt path $K$-algebra} $L_K(E)$ {\em of $E$ with coefficients in $K$} is
the $K$-algebra generated by a set $\{v \mid v\in E^0\}$ of pairwise orthogonal idempotents,
together with a set of variables $\{e,e^* \mid e \in E^1 \}$, which satisfy the following
relations:

(1) $s(e)e=er(e)=e$ for all $e\in E^1$.

(2) $r(e)e^*=e^*s(e)=e^*$ for all $e\in E^1$.

(3) (The ``CK1 relations") \ $e^*e'=\delta _{e,e'}r(e)$ for all $e,e'\in E^1$.

(4) (The ``CK2 relations") \ $v=\sum _{\{ e\in E^1\mid s(e)=v \}}ee^*$ for every vertex
$v\in E^0$ for which $s^{-1}(v)$ is
nonempty.
}
\end{defi}

When the role of the coefficient field $K$ is not central to the
discussion, we will often denote $L_K(E)$ simply by $L(E)$. The set
$\{e^*\mid e\in E^1\}$ will be denoted by $(E^1)^*$. We let $r(e^*)$
denote $s(e)$, and we let $s(e^*)$ denote $r(e)$. If $\mu = e_1
\dots e_n$ is a path, then we denote by $\mu^*$ the element $e_n^*
\cdots e_1^*$ of $L_K(E)$.

An alternate description of $L_K(E)$ is given in \cite{AA1}, where
it is described in terms of a free associative algebra modulo the
appropriate relations indicated in Definition \ref{definition}
above. As a consequence, if $A$ is any $K$-algebra which contains a
set of elements satisfying these same relations (we call such a set
an $E$-\emph{family}), then there is a (unique) $K$-algebra
homomorphism from $L_K(E)$ to $A$ mapping the generators of $L_K(E)$
to their appropriate counterparts in $A$.  We will refer to this
conclusion as the {\it Universal Homomorphism Property} of $L_K(E)$.

If $F$ is a subgraph of $E$, then $F$ is called \emph{complete} in
case $s^{-1}_F(v)  = s^{-1}_E(v)$  for every $v\in F^0$ having
$s^{-1}_F(v) \neq \emptyset$.   In
particular, if $F$ is a complete subgraph of $E$ then the Universal Homomorphism Property of $L_K(F)$ yields that there is a
$K$-algebra homomorphism $L_K(F) \rightarrow L_K(E)$ mapping vertices and edges in $F$ with their counterparts in $E$. This homomorphism is in fact a $K$-algebra  {\it mono}morphism by \cite[Lemma 1.1]{AbAnhLouP}.

Many well-known algebras arise as the Leavitt path algebra of a
row-finite graph. For instance (see e.g. \cite[Examples 1.4]{AA1}),
the classical Leavitt algebras $L_n$ for $n\ge 2$ arise as the
algebras $L(R_n)$, where $R_n$ is the ``rose with $n$ petals" graph
$$\xymatrix{ & {\bullet^v} \ar@(ur,dr) ^{e_1} \ar@(u,r) ^{e_2}
\ar@(ul,ur) ^{e_3} \ar@{.} @(l,u) \ar@{.} @(dr,dl) \ar@(r,d) ^{e_n}
\ar@{}[l] ^{\ldots} }$$ The full $n\times n$ matrix algebra over $K$
arises as the Leavitt path algebra of the oriented $n$-line graph
$$\xymatrix{{\bullet}^{v_1} \ar [r] ^{e_1} & {\bullet}^{v_2} \ar [r]
^{e_2} & {\bullet}^{v_3} \ar@{.}[r] & {\bullet}^{v_{n-1}} \ar [r]
^{e_{n-1}} & {\bullet}^{v_n}} $$ while the Laurent polynomial
algebra $K[x,x^{-1}]$ arises as the Leavitt path algebra of the
``one vertex, one loop" graph
$$\xymatrix{{\bullet}^{v} \ar@(ur,dr) ^x}$$

\medskip
\noindent
 Constructions such as
direct sums and the formation of matrix rings produce additional
examples of Leavitt path algebras.

We recall now some information and establish notation for unital rings which will be used throughout Sections \ref{Morita Equiv} and \ref{Isomorphism}. We write
$$R \sim_M S$$
 to denote that $R$ is Morita equivalent to $S$.   For any ring $R$ we let ${\mathcal V}(R)$ denote the monoid of isomorphism classes of finitely generated projective left $R$-modules, with operation $\oplus$.  Since ${\mathcal V}(R)$ is conical (i.e., $[P]\oplus [Q] = [0]$ in ${\mathcal V}(R)$ if and only if $[P]=[Q]=[0]$), in fact
$${\mathcal V}^*(R)= {\mathcal V}(R) \setminus \{[0]\}$$
 is a semigroup as well.    For any graph $E$,  $\{[L(E)v] | v\in E^0\}$ is a set of generators for ${\mathcal V}^*(L(E))$.  If $\Phi:R{\rm -Mod} \rightarrow S{\rm -Mod}$ is a Morita equivalence, then the restriction
 $$\Phi_{{\mathcal V}}: {\mathcal V}^*(R) \rightarrow {\mathcal V}^*(S)$$
 is an isomorphism of semigroups.

 A nonzero idempotent $e$ in a ring $R$ is called {\it infinite} in case there
exist nonzero idempotents $f,g$ for which $e=f+g$, and $Re \cong Rf$ as left
  $R$-modules.  (That is, $e$ is infinite in case the left ideal $Re$ contains a proper direct summand isomorphic to itself.)  A simple unital ring $R$ is called {\it purely infinite} in case every nonzero left ideal of $R$ contains an infinite idempotent.

By \cite[Propositions 2.1 and 2.2]{AGP}, if $R$ is purely infinite simple, then ${\mathcal V}^*(R) = K_0(R)$ (the Grothendieck group of $R$).  In particular, any two elements of ${\mathcal V}^*(R)$ which are equal in $K_0(R)$ are in fact isomorphic as left $R$-modules.   Thus for $R,S$ Morita equivalent purely infinite simple rings, a Morita equivalence $\Phi: R{\rm -Mod} \rightarrow S{\rm -Mod}$ in fact restricts to an isomorphism
$$\Phi_{{\mathcal V}}: K_0(R) \rightarrow K_0(S).$$
We note that, in general, such an induced isomorphism of $K_0$ groups need not take $[1_R]$ to $[1_S]$.

Although $L(E)$ can be constructed for any graph $E$, the
Classification Question which is the main subject of this paper
pertains to those choices of $E$ for which $L(E)$ is purely infinite
simple unital.  It is easy to verify that $L(E)$ is unital if and only if $E^0$ is
finite  (in which case  $\sum _{v\in E^0} v=1_{L(E)}$), a fact that we will use throughout without explicit mention.    Thus
for much of the discussion we will assume that $E^0$ is finite; since for row-finite $E$ the finiteness of $E^0$ implies the finiteness of $E^1$, we simply write $E$ {\it is finite} in this case.    By
\cite[Theorem 3.11]{AA1} (and by substituting an equivalent
characterization from \cite[Lemma 2.8]{APS} for one of the
conditions therein), we get

\medskip

{\bf Simplicity Theorem.} For $E$ finite,  $L(E)$ is simple
precisely when every cycle of $E$ contains an exit, and there exists
a path in $E$ from any vertex to any cycle or sink.

\medskip

Furthermore, it is shown in \cite[Theorem 11]{AA2} that

\medskip

{\bf Purely Infinite Simplicity Theorem.}   $L(E)$ is purely
infinite simple precisely when $L(E)$ is simple, and $E$ contains a
cycle.

\medskip

Note that, as a consequence, whenever $L(E)$ is purely infinite
simple, $E$ does not contain sinks.

\section{Sufficient conditions for Morita equivalence between purely infinite simple unital Leavitt path algebras}\label{Morita Equiv}

In this section we establish sufficient conditions on two finite graphs
$E$ and $F$ which guarantee that $L(E)$ is Morita equivalent to
$L(F)$.  In the first step of this process, we build a cache of
operations on graphs that preserve Morita equivalence of the
associated Leavitt path algebras.  Once this arsenal is large
enough, the sufficiency result will follow from a well-known theorem
of Franks from symbolic dynamics, specifically, from the theory of
subshifts of finite type.  Our initial goal is to establish enough
such Morita-equivalence-preserving operations to allow us to apply
Franks' Theorem.  With that in mind, we prove only very restrictive
versions of the germane properties here, in order to significantly
streamline the proofs and arrive at our main results with maximum
haste.   (For instance, we present results here only for finite graphs, even though many of these results  hold for all row-finite graphs.) For completeness, we provide much more general versions of these properties in
Section \ref{Extra}.

 Our goal in this section is to establish a Morita equivalence result, i.e., a
 result which establishes the existence a Morita equivalence between various
 Leavitt path algebras.   However, a specific description of these equivalences,
 in particular a description of the restriction of these equivalences to the ${\mathcal V}^*$-semigroups, will be central to our discussion in the subsequent section; we
 therefore provide such additional information in Propositions \ref{sourceelimnationprop},  \ref{expansionprop}, \ref{insplittingprop}, and \ref{outsplittingprop}.

The key lemma which will be used to establish Morita equivalences throughout
this section is:

\begin{lem}\label{morita_lem}
Suppose $R$ and $S$ are  simple unital rings.  Let  $\pi : R
\rightarrow S$ be a nonzero,  not-necessarily-identity-preserving
ring homomorphism, and let $g$ denote the idempotent $\pi(1_R)$ of
$S$. If $g S g  =  \pi(R)$,  then there exists a Morita equivalence
$\Phi : R{\rm -Mod} \rightarrow S{\rm -Mod}$.

Moreover, $\Phi$ restricts to
an isomorphism $\Phi_{\mathcal{V}} : \mathcal{V}^*(R) \rightarrow
\mathcal{V}^*(S)$ with the property that for any idempotent $e \in R$,
$$\Phi_{\mathcal{V}}([Re]) = [S\pi(e)].$$
\end{lem}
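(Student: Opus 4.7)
The plan is to recognize the hypothesis as saying that $\pi$ identifies $R$ with the corner of $S$ at a full idempotent, and then to invoke the classical corner Morita equivalence.

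First I would observe that $\pi$ is injective: its kernel is a two-sided ideal of the simple ring $R$, and it is proper because $\pi \neq 0$, so $\ker \pi = 0$. Combined with the hypothesis $gSg = \pi(R)$, this provides a ring isomorphism $\bar\pi : R \to gSg$ sending $1_R$ to $g$; in particular $g = \pi(1_R) \neq 0$. Next I would show that $g$ is a full idempotent of $S$, meaning $SgS = S$; this is the only place where simplicity of $S$ enters, since the two-sided ideal $SgS$ is nonzero and therefore must equal $S$. Now the classical corner Morita equivalence applies: for a full idempotent $g$ in $S$, the functor $\Psi : gSg-Mod \to S-Mod$ defined by $\Psi(N) = Sg \otimes_{gSg} N$, with quasi-inverse $M \mapsto gM$, is an equivalence of categories. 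Composing with the equivalence $R-Mod \to gSg-Mod$ induced by $\bar\pi$ produces the required Morita equivalence $\Phi : R-Mod \to S-Mod$.

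For the moreover clause I would trace a generator $[Re]$ through the two stages. The equivalence along $\bar\pi$ sends $[Re]$ to $[gSg \cdot \pi(e)]$, since $\bar\pi$ carries the idempotent $e \in R$ to the idempotent $\pi(e) \in gSg$. Applying $\Psi$ gives $Sg \otimes_{gSg} gSg\pi(e) \cong Sg \cdot \pi(e)$ via the natural multiplication map, which is an isomorphism on projectives of this form. Since $\pi(e) \in gSg$ implies $g\pi(e) = \pi(e)$, this simplifies to $S\pi(e)$, yielding $\Phi_{\mathcal V}([Re]) = [S\pi(e)]$ as required.

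No serious obstacle is anticipated: the entire content is packaged into the standard corner Morita equivalence, and the hypothesis $gSg = \pi(R)$ is tailored precisely so that $R$ gets identified with the corner $gSg$. The only check of substance is that simplicity of $S$ supplies fullness of $g$ automatically, while simplicity of $R$ supplies injectivity of $\pi$.
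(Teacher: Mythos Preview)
Your proposal is correct and follows essentially the same approach as the paper: identify $R$ with the corner $gSg$ via injectivity of $\pi$ (from simplicity of $R$), use simplicity of $S$ to get fullness of $g$, invoke the standard corner Morita equivalence $N \mapsto Sg \otimes_{gSg} N$, and trace $[Re]$ through to $[S\pi(e)]$. The paper's proof is organized in the same two-step fashion with the same functors.
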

\begin{proof}
That $\pi$ is nonzero, together with the simplicity of $R$, ensures
an isomorphism $R \cong \pi(R) = gSg$ as rings.  This gives a Morita
equivalence \[ \Pi : R{\rm -Mod} \to gSg{\rm -Mod}, \] given on objects by defining,
for each left $R$-module $M$, $ \Pi(M) = M^g $, where $M^g = M$ has $gSg$-action given by $gsg \ast m = \pi^{-1}(gsg)m.$

On the other hand, since $g = \pi(1_R) \neq 0$, the simplicity of $S$ ensures that
$SgS=S$, from which we conclude that the finitely generated projective left $S$-module
${}_SSg$ is a generator of the category of left $S$-modules.   Thus by the
well-known result of Morita we get a Morita equivalence \[ \Psi : gSg{\rm -Mod} \to S{\rm -Mod} \]
given by defining, for any $gSg$-module $N$, $\Psi(N) = Sg \otimes_{gSg} N.$

The composition of these two Morita equivalences gives a Morita equivalence \[ \Phi : R{\rm -Mod} \to S{\rm -Mod}. \]
Specifically, for each left $R$-module $M$,  $\Phi(M) = Sg \otimes_{gSg} M^g.$   In particular, $\Phi$
restricts to an isomorphism \[ \Phi_\mathcal{V} : \mathcal{V}^*(R) \to \mathcal{V}^*(S). \]

It is tedious and straightforward to show, for each $e=e^2\in R$, that $Sg \otimes_{gSg} (Re)^g \cong S\pi(e)$ as left $S$-modules, so the second statement follows as well.
\end{proof}

Suppose $E$ is a finite graph, let $X$ be any set of distinct vertices of $E$, and let $e=\sum_{v\in X}v \in L(E)$. It is immediate that every $y\in eL(E)e$ can be written as a $K$-linear combination of monomials of the form $\mu\nu^*$ for which $s(\mu), s(\nu) \in X$.   This observation will be used in the proofs of various results throughout the section without explicit mention.

We now establish the first of the four Morita equivalence results
required to achieve Theorem \ref{Franksinvariantsufficient}.

\begin{defi}\label{sourceelimdefinition}
{\rm Let $E = (E^0, E^1, r, s)$ be a directed graph with at least
two vertices, and let $v \in E^0$ be a source.  We form the {\em
source elimination graph} $E_{\backslash v}$ of $E$ as follows:
\begin{align*}
E_{\backslash v}^0   &= E^0 \backslash \{ v \} \\
E_{\backslash v}^1   &= E^1 \backslash s^{-1}(v) \\
s_{E_{\backslash v}} &= s |_{E_{\backslash v}^1} \\
r_{E_{\backslash v}} &= r |_{E_{\backslash v}^1}
\end{align*}
}
\end{defi}

\begin{exem}
{\rm Let $E$ be the graph:
$$\xymatrix{ \bullet \ar@/^/[r] & \bullet \ar@/^/[l] & \bullet^v \ar[l]}$$
Then the source elimination graph $E_{\backslash v}$ is
$$\xymatrix{ \bullet \ar@/^/[r] & \bullet \ar@/^/[l]}$$}
\end{exem}

It is easy to see that as long as the graph $E$ contains a cycle,
repeated source elimination can be used to convert $E$ into a graph
with no sources.

\begin{prop}\label{sourceelimnationprop}
Let $E$ be a finite graph containing at least two vertices such
that $L(E)$ is simple, and let $v \in E^0$ be a source.
Then $L(E_{\setminus v})$ is Morita equivalent to $L(E)$, via
a Morita equivalence
$$\Phi^{{\rm elim}}: L(E_{\setminus v}){\rm -Mod} \rightarrow
L(E){\rm -Mod}$$
 for which
$\Phi^{{\rm elim}}_\mathcal{V}([L(E_{\backslash v})w]) = [L(E)w]$ for all vertices
$w$ of $E_{\backslash v}$.
\end{prop}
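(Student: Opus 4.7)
The plan is to apply Lemma \ref{morita_lem} to the natural inclusion $\pi: L(E_{\setminus v}) \to L(E)$. First I would observe that since $v$ is a source we have $r^{-1}(v) = \emptyset$, so no edge of $E$ terminates at $v$. Consequently, for every vertex $w \neq v$ one has $s^{-1}_{E_{\setminus v}}(w) = s^{-1}_E(w)$, i.e., $E_{\setminus v}$ is a complete subgraph of $E$. The Universal Homomorphism Property then produces a (nonzero) $K$-algebra homomorphism $\pi: L(E_{\setminus v}) \to L(E)$ sending each vertex, edge, and ghost edge of $E_{\setminus v}$ to itself. Set
\[
g := \pi(1_{L(E_{\setminus v})}) = \sum_{w \in E^0 \setminus \{v\}} w = 1_{L(E)} - v.
\]
The inclusion $\pi(L(E_{\setminus v})) \subseteq gL(E)g$ is immediate.

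The crux of the argument is the reverse containment. Any element of $gL(E)g$ is a $K$-linear combination of monomials $\mu\nu^*$ with $s(\mu), s(\nu) \in E^0 \setminus \{v\}$. Because $v$ is a source, $v$ never appears as the range of any edge of $E$, so no intermediate vertex on $\mu$ or $\nu$ can equal $v$. Hence each of $\mu$ and $\nu$ is a path in $E_{\setminus v}$, and $\mu\nu^* \in \pi(L(E_{\setminus v}))$. Therefore $gL(E)g = \pi(L(E_{\setminus v}))$.

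To invoke Lemma \ref{morita_lem} I also need $L(E_{\setminus v})$ to be simple and unital. Unitality is automatic since $E_{\setminus v}$ is finite. For simplicity, I apply the Simplicity Theorem: cycles of $E_{\setminus v}$ are exactly the cycles of $E$ (none can pass through $v$, which has no incoming edges), and each retains its exits because edges with source on a cycle are unaffected by the elimination. Sinks of $E_{\setminus v}$ are precisely the sinks of $E$ other than $v$ itself; moreover, $v$ cannot actually be a sink in $E$, for if it were, then simplicity of $L(E)$ would force a path from each of the other (existing, since $|E^0| \geq 2$) vertices to $v$, contradicting $r^{-1}(v) = \emptyset$. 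Finally, any path in $E$ from some $w \neq v$ to a cycle or sink cannot traverse $v$, so it remains a valid path in $E_{\setminus v}$.

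With all hypotheses verified, Lemma \ref{morita_lem} produces the desired Morita equivalence $\Phi^{\mathrm{elim}}: L(E_{\setminus v})\text{-}Mod \to L(E)\text{-}Mod$, and its ``Moreover'' clause gives $\Phi^{\mathrm{elim}}_{\mathcal V}([L(E_{\setminus v})w]) = [L(E)\pi(w)] = [L(E)w]$ for each $w \in E_{\setminus v}^0$, as required. The main obstacle is the verification $gL(E)g = \pi(L(E_{\setminus v}))$; what makes it work is precisely the source hypothesis on $v$, which prevents $v$ from occurring in the interior of any path of $E$.
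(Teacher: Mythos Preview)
Your proof is correct and follows essentially the same approach as the paper's: both verify that $E_{\setminus v}$ is a complete subgraph, build the natural inclusion $\pi$, check that $\pi(L(E_{\setminus v})) = gL(E)g$ by using that no path starting outside $v$ can pass through the source $v$, and then apply Lemma~\ref{morita_lem}. Your write-up even supplies more detail than the paper in verifying that $L(E_{\setminus v})$ is simple (the paper dismisses this as ``an easy application of the Simplicity Theorem'').
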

\begin{proof}
We begin by noting that, as an easy application of the Simplicity
Theorem, $L(E)$ is simple and unital if and only if $L(E_{\backslash
v})$ is simple and unital.   (The hypothesis that $E$ contains at
least two vertices ensures that we are not creating an empty graph
by eliminating a single vertex.)

From the definition of $E_{\backslash v}$, it is clear that
$E_{\backslash v}$ is a complete subgraph of $E$.  Thus, the
$K$-algebra map defined by the rule
\begin{eqnarray*}
\pi : L(E_{\backslash v}) & \to     & L(E) \\
      w                   & \mapsto & w    \\
      e                   & \mapsto & e    \\
      e^\ast              & \mapsto & e^\ast
\end{eqnarray*}
for every $w \in E_{\backslash v}^0$ and every $e \in E_{\backslash
v}^1$, is a nonzero ring homomorphism.

We claim that $\pi(L(E_{\backslash v})) = {\pi(1_{L(E_{\backslash
v})})}\,L(E)\,{\pi(1_{L(E_{\backslash v})})}$.  Note that by
definition we have ${\pi(1_{L(E_{\backslash v})})} = \sum_{w \in
E^0, w \neq v} w$.  The inclusion $\pi(L(E_{\backslash v}))
\subseteq {\pi(1_{L(E_{\backslash
v})})}\,L(E)\,{\pi(1_{L(E_{\backslash v})})}$ is immediate. For the
other direction, it suffices to consider an arbitrary $\mu_1 \mu_2^\ast \in
{\pi(1_{L(E_{\backslash v})})}\,L(E)\,{\pi(1_{L(E_{\backslash
v})})}$. Then $\mu_1$ and $\mu_2$ are paths in $E$ such that neither has
$v$ for its source, and their ranges are equal.  But if neither has
$v$ for a source, then since $v$ is a source itself, neither path
can pass through $v$ at all.  Therefore, $\mu_1$ and $\mu_2$ are
also paths in $E_{\backslash v}$, such that $\pi(\mu_1 \mu_2^\ast) =
\mu_1 \mu_2^\ast$.  This completes the argument that
$\pi(L(E_{\backslash v})) = {\pi(1_{L(E_{\backslash
v})})}\,L(E)\,{\pi(1_{L(E_{\backslash v})})}$.

Applying Lemma \ref{morita_lem}, we conclude that $L(E_{\setminus v})$ is Morita
equivalent to $L(E)$, and that the Morita equivalence
restricts to an isomorphism between ${\mathcal V}^*(L(E_{\backslash v}))$ and ${\mathcal V}^*(L(E))$ that maps $[L(E_{\backslash v})w]$ to $[L(E)w]$ for each
vertex $w$ of $E_{\backslash v}$.
\end{proof}

\begin{corol}\label{sourceelimtogetMoritaequiv}
Let $E$ be a finite graph for which $L(E)$ is purely infinite simple. Then there exists a graph $E'$ which contains no sources,
with the property that $L(E)$ is Morita equivalent to $L(E')$ via
a Morita equivalence
$$\Phi^{{\rm ELIM}}: L(E'){\rm -Mod} \rightarrow L(E){\rm -Mod}$$
 for which  $\Phi^{{\rm ELIM}}_\mathcal{V}([L(E')w])
= [L(E)w]$ for all vertices $w$ of $E'$.
\end{corol}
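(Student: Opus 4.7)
The plan is to iterate Proposition \ref{sourceelimnationprop} until we reach a graph with no sources, and then compose the resulting Morita equivalences. Since $L(E)$ is purely infinite simple unital, $E$ is finite and, by the Purely Infinite Simplicity Theorem, contains a cycle $c$; observe that no vertex on $c$ can be a source, since each such vertex receives an incoming edge from $c$ itself. In particular, if $E$ has a source $v$, then $E$ necessarily has at least two vertices (namely $v$ together with any vertex lying on $c$), so the hypotheses of Proposition \ref{sourceelimnationprop} are met and we obtain a Morita equivalence
$$\Phi_1^{{\rm elim}}: L(E_{\setminus v})-Mod \fl L(E)-Mod$$
with $\Phi_{1,\mathcal{V}}^{{\rm elim}}([L(E_{\setminus v})w]) = [L(E)w]$ for each vertex $w \in E^0\setminus\{v\}$.

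Next I would verify that $E_{\setminus v}$ inherits all of the properties needed to iterate. It is still finite; the cycle $c$ survives in $E_{\setminus v}$ because $v$ is not on $c$; and $L(E_{\setminus v})$ is simple and unital by the biconditional established at the start of the proof of Proposition \ref{sourceelimnationprop}. Combining these with the Purely Infinite Simplicity Theorem gives that $L(E_{\setminus v})$ is again purely infinite simple unital, so the same argument applies again.

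Then I would define inductively $E_0 = E$ and, whenever $E_k$ still contains a source $v_k$, set $E_{k+1} = (E_k)_{\setminus v_k}$ with associated Morita equivalence $\Phi_{k+1}^{{\rm elim}}: L(E_{k+1})-Mod \fl L(E_k)-Mod$. Since $|E^0|$ is finite and we delete a vertex at each step, while the cycle $c$ (and hence at least one non-source vertex) is never removed, the procedure terminates after at most $|E^0|-1$ steps at a graph $E'$ containing no sources. Define $\Phi^{{\rm ELIM}}$ to be the composition of the $\Phi_k^{{\rm elim}}$. Morita equivalences compose, and at each stage the induced semigroup map sends $[L(E_{k+1})w]\mapsto [L(E_k)w]$ for every surviving vertex $w$, so the composition satisfies $\Phi_{\mathcal{V}}^{{\rm ELIM}}([L(E')w]) = [L(E)w]$ for every $w\in (E')^0$, as required.

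The only point demanding care is that the hypotheses of Proposition \ref{sourceelimnationprop} remain in force at each iteration, i.e.\ that the current graph has at least two vertices and that its Leavitt path algebra remains simple and unital. Both are handled by the single observation that the cycle $c$ is preserved throughout: it guarantees the presence of a non-source vertex (so source-elimination never produces the empty graph) and, via the Simplicity/Purely Infinite Simplicity Theorems together with the iff noted in the proof of Proposition \ref{sourceelimnationprop}, it ensures that purely infinite simple unitality propagates to each $L(E_k)$.
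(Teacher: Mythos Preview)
Your argument is correct and follows essentially the same approach as the paper: iterate Proposition \ref{sourceelimnationprop}, using the cycle guaranteed by the Purely Infinite Simplicity Theorem to ensure the process terminates at a nonempty source-free graph, and compose the resulting Morita equivalences. Your version is slightly more explicit in tracking a fixed cycle $c$ and in verifying the two-vertex hypothesis at each step, but the underlying strategy is identical.
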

\begin{proof}
By continually applying the source elimination procedure described in Definition \ref{sourceelimdefinition}, we produce from the finite graph $E$ a new graph $E'$ having no sources.  We must show that $E'$ is not the empty graph; that is, we must show that if the source elimination process eventually leads to a graph $F$ with one vertex, then that vertex is not a source.  By Proposition \ref{sourceelimnationprop}, at each stage of the source elimination process the graph produced in the new stage has Leavitt path algebra Morita equivalent to the Leavitt path algebra of the graph in the previous stage.      By \cite[Proposition 10]{AA2}, purely infinite simplicity is a Morita invariant.   Thus $L(F)$ is purely infinite simple.   But a graph $F$ with one vertex for which $L(F)$ is purely infinite simple must contain at least one loop at that vertex (e.g., by the Purely Infinite Simplicity Theorem), so that the vertex is not a source, thus completing the proof.
\end{proof}

We now build the second of the four indicated Morita equivalence results.

\begin{defis}
{\rm Let $E = (E^0, E^1, r, s)$ be a directed graph, and let $v
\in E^0$. Let $v^*$ and $f$ be symbols not in $E^0 \cup E^1$.   We
form the {\em expansion graph} $E_v$ from $E$ at $v$ as follows:
\begin{align*}
E_v^0      &= E^0 \cup \{ v^\ast \} \\
E_v^1      &= E^1 \cup \{ f \} \\
s_{E_v}(e) &= \left\{ \begin{array}{ll}
              v      & \textrm{ if $e = f$} \\
              v^\ast & \textrm{ if $s_E(e) = v$} \\
              s_E(e) & \textrm{ otherwise}
              \end{array}\right. \\
r_{E_v}(e) &= \left\{ \begin{array}{ll}
              v^\ast & \textrm{ if $e = f$} \\
              r_E(e) & \textrm{ otherwise}
              \end{array}\right. \\
\end{align*}
\noindent Conversely, if $E$ and $G$ are graphs, and there exists a
vertex $v$ of $E$ for which $E_v=G$, then $E$ is called a {\em
contraction} of $G$.}
\end{defis}

\begin{exem}
{\rm Let $E$ be the graph: $$\xymatrix{\bullet^v \ar@(ul,ur) \ar[dr]
\\ \bullet \ar[u] & \bullet \ar[l] }$$ Then the expansion graph
$E_v$ is $$\xymatrix{\bullet^v \ar@/^/[r]|f & \bullet^{v^\ast}
\ar@/^/[l] \ar[d] \\ \bullet \ar[u] & \bullet \ar[l] }$$}
\end{exem}

\begin{prop}\label{expansionprop}
Let $E$ be a finite graph such that $L(E)$ is simple,
and let $v \in E^0$.  Then $L(E)$ is Morita equivalent to $L(E_v)$,
via a Morita equivalence
$$\Phi^{{\rm exp}}: L(E){\rm -Mod} \rightarrow L(E_v){\rm -Mod}$$
 for which $\Phi^{{\rm exp}}_\mathcal{V}([L(E)w])
= [L(E_v)w]$ for all vertices $w$ of $E$.
\end{prop}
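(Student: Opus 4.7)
The plan is to apply Lemma \ref{morita_lem} with $R = L(E)$ and $S = L(E_v)$, so I must construct a nonzero ring homomorphism $\pi : L(E) \to L(E_v)$ satisfying $\pi(1_{L(E)})\,L(E_v)\,\pi(1_{L(E)}) = \pi(L(E))$. Since the graph $E_v$ differs from $E$ only by replacing the ``source slot'' of $v$ with a new vertex $v^\ast$ reached by the edge $f$, the natural choice is to send each $w \in E^0$ to $w$, each edge $e \in E^1$ with $s_E(e) \neq v$ to $e$ (and $e^\ast$ to $e^\ast$), and each edge $e \in E^1$ with $s_E(e) = v$ (which in $E_v$ has source $v^\ast$) to $fe$ (and $e^\ast$ to $e^\ast f^\ast$). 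Note $fe$ is a legitimate path in $E_v$ from $v$ to $r_E(e)$.

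First I would verify that this assignment defines a $K$-algebra homomorphism by checking all four relations of Definition \ref{definition}; by the Universal Homomorphism Property of $L(E)$ it suffices to see that the images form an $E$-family in $L(E_v)$. Relations (1) and (2) are immediate. For CK1, the only nontrivial case is $s_E(e) = s_E(e') = v$, where $\pi(e^\ast)\pi(e') = e^\ast f^\ast f e' = e^\ast v^\ast e' = e^\ast e' = \delta_{e,e'}r_E(e)$, using $f^\ast f = v^\ast$ in $L(E_v)$. For CK2 at a vertex $w \neq v$, the edges out of $w$ in $E$ and $E_v$ coincide and $\pi$ is the identity on them. For CK2 at $v$, the computation is
\[
\pi\!\left(\sum_{s_E(e)=v} e e^\ast\right) = \sum_{s_E(e)=v} (fe)(e^\ast f^\ast) = f\!\left(\sum_{s_{E_v}(e)=v^\ast} e e^\ast\right)\! f^\ast = f v^\ast f^\ast = f f^\ast = v = \pi(v),
\]
using CK2 in $L(E_v)$ at $v^\ast$ and then at $v$ (where $f$ is the only outgoing edge). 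The map $\pi$ is nonzero since $\pi(w) = w \neq 0$.

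The central step, and the main obstacle, is verifying $\pi(L(E)) = g L(E_v) g$, where $g := \pi(1_{L(E)}) = \sum_{w \in E^0} w$. The inclusion $\pi(L(E)) \subseteq g L(E_v) g$ is immediate. For the reverse inclusion, every element of $g L(E_v) g$ is a $K$-linear combination of monomials $\mu \nu^\ast$ with $s_{E_v}(\mu), s_{E_v}(\nu) \in E^0$ (i.e.\ distinct from $v^\ast$) and $r_{E_v}(\mu) = r_{E_v}(\nu)$. The structural observation is that the only edge with source $v$ in $E_v$ is $f$, and the only edge with range $v^\ast$ in $E_v$ is $f$; consequently, inside $\mu$ every internal occurrence of $f$ must be followed by some edge $e$ with $s_E(e)=v$, and every traversal through $v$ must go via $f$. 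If $r(\mu) = v^\ast$, then $\mu$ and $\nu$ both end in $f$, so $\mu \nu^\ast = \mu_0 f f^\ast \nu_0^\ast = \mu_0 v \nu_0^\ast = \mu_0 \nu_0^\ast$, reducing to the case in which neither path terminates in $f$. After this reduction, $\mu$ decomposes uniquely as $\alpha_0 (f e_1) \alpha_1 \cdots (f e_k) \alpha_k$ where each $\alpha_i$ is a (possibly trivial) $f$-free path in $E_v$ with no edge having source $v$, and each $e_i$ satisfies $s_E(e_i) = v$. Since $f e_i = \pi(e_i)$ and $\pi$ fixes each $\alpha_i$, the path $\tilde\mu := \alpha_0 e_1 \alpha_1 \cdots e_k \alpha_k$ is a legitimate path in $E$ with $\pi(\tilde\mu) = \mu$. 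The same analysis applies to $\nu$, whence $\mu \nu^\ast = \pi(\tilde\mu \tilde\nu^\ast) \in \pi(L(E))$.

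Lemma \ref{morita_lem} then furnishes the Morita equivalence $\Phi^{{\rm exp}} : L(E)\text{-Mod} \to L(E_v)\text{-Mod}$, and the assertion $\Phi^{{\rm exp}}_\mathcal{V}([L(E)w]) = [L(E_v)\pi(w)] = [L(E_v)w]$ for every $w \in E^0$ is precisely the ``moreover'' clause of that lemma applied to the idempotent $w$.
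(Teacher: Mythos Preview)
Your proof is correct and follows essentially the same route as the paper: the same homomorphism $\pi$ (the paper writes $T_e = fe$, $T_e^\ast = e^\ast f^\ast$ for $s_E(e)=v$), the same verification of the $E$-family relations, the same corner computation $\pi(L(E)) = gL(E_v)g$ via the reduction $\mu\nu^\ast = \mu_0 ff^\ast \nu_0^\ast = \mu_0\nu_0^\ast$ when $r(\mu)=v^\ast$, and the same appeal to Lemma~\ref{morita_lem}. The only omission is that Lemma~\ref{morita_lem} requires $S = L(E_v)$ to be simple and unital; the paper records this at the outset as an easy consequence of the Simplicity Theorem, and you should add that sentence.
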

\begin{proof}
We begin by noting that, as an easy application of the Simplicity
Theorem, $L(E)$ is simple and unital if and only if $L(E_{v})$ is
simple and unital.

For each $w \in E^0$, define $Q_w = w$.  For each $e \in s^{-1}(v)$,
define $T_e = fe$ and $T_e^\ast = e^\ast f^\ast$.  For $e \in E^1$
otherwise, define $T_e = e$ and $T_e^\ast$ = $e^\ast$.  We claim
that $\{ Q_w, T_e, T_e^\ast \mid w \in E^0, e \in E^1 \}$ is an
$E$-family in $L(E_v)$.  The $Q_w$'s are mutually orthogonal
idempotents because the $w$'s are.  The elements $T_e$ for $e \in
E^1$ clearly satisfy $T_e^\ast T_f = 0$ whenever $e \ne f$.  For $e
\in E^1$, it is easy to check that $T_e^\ast T_e = Q_{r(e)}$.  Note
that $\sum_{e \in s^{-1}(v)} T_e T_e^\ast = f \left(\sum_{e \in
s^{-1}(v^\ast)} e e^\ast \right) f^\ast = f f^\ast = v = Q_v$.  The
same property holds immediately for all $w \in E^0$ having $w \neq
v$, thereby establishing the claim.

Therefore, by the Universal Homomorphism Property of $L(E)$, there is a
$K$-algebra homomorphism $\pi : L(E) \to L(E_v)$ that maps $w \mapsto
Q_w$, $e \mapsto T_e$, and $e^\ast \mapsto T_e^\ast$.  Note that
$\pi$ maps $w$ to $Q_w \neq 0$, so $\pi$ is nonzero. We now claim
that $\pi(L(E)) = {\pi(1_{L(E)})}\,L(E_v)\,{\pi(1_{L(E)})}$, where
$\pi(1_{L(E)}) = \sum_{w \in E^0} w$, viewed as an element of $L(E_v)$.
The inclusion $\pi(L(E)) \subseteq {\pi(1_{L(E)})}\,L(E_v)\,{\pi(1_{L(E)})}$
is immediate.  For the other direction, it suffices to consider arbitrary nonzero
terms in ${\pi(1_{L(E)})}\,L(E_v)\,{\pi(1_{L(E)})}$ of the form $\mu_1 \mu_2^\ast$, where $\mu_1$ and $\mu_2$ are paths in $E_v$, $s(\mu_1), s(\mu_2) \neq v^\ast$, and
$r(\mu_1) = r(\mu_2)$.

Let $\alpha$ be the path in $E$ obtained by removing the edge $f$ from $\mu_1$
any place that it occurs, and similarly let $\beta$ be the path obtained by removing
$f$ from $\mu_2$.  We claim that $\pi(\alpha \beta^\ast) = \mu_1 \mu_2^\ast$.
There are two cases.  If $r(\mu_1) \neq v^\ast \neq r(\mu_2)$, then
$\mu_1 = \pi(\alpha)$ and $\mu_2 = \pi(\beta)$, and the result follows.
Otherwise, $r(\mu_1) = v^\ast = r(\mu_2)$.  But because $\mu_1$ and $\mu_2$ both
begin at a vertex other than $v^\ast$, and the only edge entering $v^\ast$ is $f$,
we must have $\mu_1 = \nu_1 f$ and $\mu_2$ = $\nu_2 f$, for paths $\nu_1, \nu_2$
in $E_v$, where $r(\nu_1) = v = r(\nu_2)$.  But then $\mu_1 \mu_2^\ast = \nu_1
f f^\ast \nu_2^\ast = \nu_1 \nu_2^\ast$ by the CK2 relation at $v$, and we are back
in the first case again, so $\pi(\alpha \beta^\ast) = \mu_1 \mu_2^\ast$, completing
the argument.

Applying Lemma \ref{morita_lem}, we conclude that $L(E)$ is Morita equivalent to $L(E_v)$,
and that the Morita equivalence restricts to the map given above.
\end{proof}

If $F$ is a contraction of $E$ (i.e., if there exists a vertex $v$ of $F$ for which $E=F_v$), then we denote by
$$\Phi^{{\rm cont}} = (\Phi^{{\rm exp}})^{-1}$$
the Morita equivalence from $L(F){\rm -Mod} \rightarrow L(E){\rm -Mod}.$
 We note that while the Morita equivalence $\Phi^{{\rm exp}}: L(E){\rm -Mod} \rightarrow L(E_v){\rm -Mod}$ arises from a ring homomorphism $\pi: L(E)\rightarrow L(E_v)$ as described in Lemma \ref{morita_lem}, the inverse equivalence $\Phi^{{\rm cont}}$ need not in general arise in this way.  For instance, if $E = \bullet^v$ is a graph with a single vertex $v$ and no edges, then $E_v = \bullet^v \rightarrow \bullet^{v^*}$, $L(E)\cong K$, $L(E_v)\cong {\rm M}_2(K)$, and $\pi: L(E)\rightarrow L(E_v)$ is the inclusion map to the upper left corner.   But there is no nonzero homomorphism from $L(E_v)$ to $L(E)$.

Our third and fourth Morita equivalence properties require somewhat more cumbersome
machinery to build than did the first two. The following
definition is borrowed from \cite[Section 5]{Flow}.

\begin{defis}\label{def_insplit}
{\rm
Let $E = ( E^0 , E^1 , r , s )$ be a directed graph. For each $v
\in E^0$ with $r^{-1}(v) \ne \emptyset$, partition the set $r^{-1}
(v)$ into disjoint nonempty subsets $\mathcal{E}^v_1 , \ldots ,
\mathcal{E}^v_{m(v)}$ where $m(v) \ge 1$. (If $v$ is a source then
we put $m(v)=0$.) Let $\mathcal{P}$ denote the resulting partition
of $E^1$. We form the {\em in-split graph} $E_r({\mathcal P})$
from $E$ using the partition $\mathcal{P}$ as follows:
\begin{align*}
E_r ( \mathcal {P} )^0 &= \{ v_i \mid v \in E^0 , 1 \le i \le m(v)
\}
\cup \{ v \mid m(v)  = 0 \} ,  \\
E_r ( \mathcal{P} )^1  &= \{ e_j \mid e \in E^1, 1 \le j \le m ( s
(e) ) \} \cup \{ e \mid m (s(e)) = 0 \} ,
\end{align*}

\noindent and define $r_{E_r ( \mathcal{P} )} , s_{E_r (
\mathcal{P} )} : E_r ( \mathcal{P} )^1 \rightarrow E_r (
\mathcal{P} )^0$ by
\begin{align*}
s_{E_r ( \mathcal{P} )} ( e_j ) &= s(e)_j  \text{ and } s_{E_r (
\mathcal{P} )} ( e
) = s(e) \\
r_{E_r ( \mathcal{P} )} ( e_j ) &= r(e)_i \text{ and } r_{E_r (
\mathcal{P} )} ( e ) = r(e)_i \text{ where } e \in
\mathcal{E}^{r(e)}_i .
\end{align*}
\noindent Conversely, if $E$ and $G$ are graphs, and there exists a
partition ${\mathcal P}$ of $E^1$ for which $E_r({\mathcal P})=G$,
then $E$ is called an {\em in-amalgamation} of $G$.}
\end{defis}

\begin{exem}
{\rm Let $E$ be the graph: $$\xymatrix{\bullet^{v} \ar@(dl,ul)
\ar@/^/[r] & \bullet^{w} \ar@/^/[l]}$$ Denote by $\mathcal{P}$ the
partition of $E^1$ that places each edge in its own singleton
partition class.  Then $E_r(\mathcal{P})$ is:
$$\xymatrix{\bullet^{v_1} \ar@(dl,ul) \ar[r] & \bullet^{w_1}
\ar@/^/[dl] \\ \bullet^{v_2} \ar[u] \ar@/^/[ur]}$$ }
\end{exem}

\begin{prop}\label{insplittingprop}
Let $E$ be a finite graph with no sources or sinks, such that $L(E)$ is simple.  Let $\mathcal{P}$ be a partition of $E^1$ as in Definitions \ref{def_insplit},
and $E_r(\mathcal{P})$ the in-split graph from $E$ using $\mathcal{P}$.
 Then $L(E)$ is Morita equivalent to $L(E_r(\mathcal{P}))$, via a Morita
 equivalence
 $$\Phi^{{\rm ins}}: L(E){\rm -Mod} \rightarrow L(E_r(\mathcal{P})){\rm -Mod}$$
  for which
$\Phi^{{\rm ins}}_\mathcal{V}([L(E)v]) = [L(E_r(\mathcal{P})) v_1]$ for all
vertices $v$ of $E$.
\end{prop}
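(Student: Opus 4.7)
The plan is to adapt the template of Propositions \ref{sourceelimnationprop} and \ref{expansionprop}: first construct a nonzero ring homomorphism $\pi : L(E) \to L(E_r(\mathcal{P}))$, then show that its image is exactly the corner $\pi(1_{L(E)})\, L(E_r(\mathcal{P}))\, \pi(1_{L(E)})$, and finally invoke Lemma \ref{morita_lem}. In order to match the asserted formula $\Phi^{\rm ins}_\mathcal{V}([L(E)v]) = [L(E_r(\mathcal{P}))v_1]$, the map $\pi$ will be chosen so that $\pi(v) = v_1$ for every $v \in E^0$. The hypothesis that $E$ has no sources gives $m(v) \geq 1$ for all $v$, so the distinguished copy $v_1$ always exists in $E_r(\mathcal{P})$; the absence of sinks in $E$ ensures the CK2 relation is available at every vertex of both $E$ and $E_r(\mathcal{P})$.

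The idea is to compensate for the fact that an edge $e \in \mathcal{E}_i^v$ does not directly arrive at $v_1$ in the in-split graph: the copy $e_1$ ends at $v_i$, so $\pi(e)$ must include a ``correction'' that moves from $v_i$ back to $v_1$. To that end, for each vertex $v \in E^0$ and each $i \in \{1,\dots,m(v)\}$ define
\begin{equation*}
S_{v_i \to v_1} \;:=\; \sum_{f \in s^{-1}(v)} f_i\, f_1^\ast \;\in\; v_i\, L(E_r(\mathcal{P}))\, v_1 .
\end{equation*}
A short CK1/CK2 calculation in $L(E_r(\mathcal{P}))$ gives $S_{v_i \to v_1}^\ast S_{v_i \to v_1} = v_1$, $S_{v_i \to v_1} S_{v_i \to v_1}^\ast = v_i$, and in particular $S_{v_1 \to v_1} = v_1$. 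Now set $Q_v := v_1$ for each $v \in E^0$, and for each $e \in E^1$ with $s(e) = u$, $r(e) = v$, and $e \in \mathcal{E}_i^v$, set
\begin{equation*}
T_e \;:=\; e_1 \cdot S_{v_i \to v_1}\,, \qquad T_e^\ast \;:=\; S_{v_i \to v_1}^\ast \cdot e_1^\ast .
\end{equation*}
Verifying that $\{Q_v, T_e, T_e^\ast\}$ is an $E$-family in $L(E_r(\mathcal{P}))$ is routine: the CK1 relation becomes $T_e^\ast T_e = S_{v_i \to v_1}^\ast v_i S_{v_i \to v_1} = v_1 = Q_{r(e)}$, and the CK2 relation at $v$ reads $\sum_{e \in s^{-1}(v)} T_e T_e^\ast = \sum_e e_1 v_i e_1^\ast = \sum_e e_1 e_1^\ast = v_1 = Q_v$ by CK2 at $v_1$ in $L(E_r(\mathcal{P}))$. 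The Universal Homomorphism Property then yields a nonzero $K$-algebra map $\pi : L(E) \to L(E_r(\mathcal{P}))$ with $\pi(v) = v_1$, $\pi(e) = T_e$, $\pi(e^\ast) = T_e^\ast$.

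The main technical step is to show $\pi(L(E)) = g\,L(E_r(\mathcal{P}))\,g$, where $g = \pi(1_{L(E)}) = \sum_{v \in E^0} v_1$. The inclusion $\subseteq$ is immediate. For the reverse, any nonzero monomial of $g\,L(E_r(\mathcal{P}))\,g$ can be written $\mu \nu^\ast$ with $\mu, \nu$ paths in $E_r(\mathcal{P})$ satisfying $s(\mu) = v_1$, $s(\nu) = v'_1$, and $r(\mu) = r(\nu)$. Paths $\mu$ of this form are in bijection with paths $\alpha = a_1 \cdots a_n$ in $E$ starting at $v$, because the starting condition $s(\mu) = v_1$ and the requirement that consecutive edges match force the source-indices along $\mu$ to be $1$, then the partition class of $a_1$, then the partition class of $a_2$, and so on. The key computational identity is $S_{w_i \to w_1} \cdot f_1 = f_i$ for any edge $f \in s^{-1}(w)$ (a direct consequence of CK1 in $L(E_r(\mathcal{P}))$). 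Applying it iteratively yields $\pi(\alpha) = \mu \cdot S_{r(a_n)_i \to r(a_n)_1}$ where $i$ is the partition class of $a_n$, and dually $\pi(\beta^\ast) = S_{r(b_m)_{i'} \to r(b_m)_1}^\ast \cdot \nu^\ast$ for $\beta = b_1 \cdots b_m$. The condition $r(\mu) = r(\nu)$ forces $r(a_n) = r(b_m)$ and $i = i'$, so the middle factor collapses by $S_{w_i \to w_1} S_{w_i \to w_1}^\ast = w_i$, yielding $\pi(\alpha \beta^\ast) = \mu \nu^\ast$; the length-zero boundary cases are covered by $S_{v_1 \to v_1} = v_1$. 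Lemma \ref{morita_lem} now supplies the Morita equivalence $\Phi^{\rm ins}$, and its final clause gives $\Phi^{\rm ins}_\mathcal{V}([L(E) v]) = [L(E_r(\mathcal{P})) \pi(v)] = [L(E_r(\mathcal{P})) v_1]$.

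The hard part will be the bookkeeping in the preceding paragraph. Because $\pi(e)$ is a sum indexed by $s^{-1}(r(e))$ rather than a single monomial, products $\pi(\alpha)\pi(\beta^\ast)$ expand a priori into many-term sums which must be shown to telescope via CK1 down to the single monomial $\mu \nu^\ast$. The two identities $S_{w_i \to w_1} f_1 = f_i$ and $S_{w_i \to w_1} S_{w_i \to w_1}^\ast = w_i$ are precisely what drives the collapse, after which the remaining verification is mechanical.
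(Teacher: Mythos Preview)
Your proof is correct and is essentially the same as the paper's. Your elements $S_{v_i \to v_1} = \sum_{f \in s^{-1}(v)} f_i f_1^\ast$ are exactly the correction factors appearing in the paper's definition $T_e = \sum_{f \in s^{-1}(v)} e_1 f_i f_1^\ast$ and in its inductive formula $\pi(\nu) = \mu\bigl(\sum_{f \in s^{-1}(w)} f_k f_1^\ast\bigr)$; you have simply packaged them with a name and recorded the partial-isometry identities $S^\ast S = v_1$, $SS^\ast = v_i$, and $S\cdot f_1 = f_i$ up front, which makes the telescoping argument cleaner to state than the paper's explicit induction, but the underlying computation is identical.
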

\begin{proof}

We begin by noting that, as an easy application of the Simplicity
Theorem and a somewhat tedious check,  $L(E)$ is  simple and unital
if and only if $L(E_r(\mathcal{P}))$ is  simple and unital.
Moreover, $E$ has no sources if and only if $E_r(\mathcal{P})$ has
no sources.

For each $v \in E^0$, define $Q_v = v_1$, which exists by the
assumption that $E$ contains no sources.  For $e \in \mathcal{E}_i^v$,
define $T_e = \sum_{f \in s^{-1}(v)} e_1 f_i f_1^\ast$ and $T_e^\ast
= \sum_{f \in s^{-1}(v)} f_1 f_i^\ast e_1^\ast$.  The claim is that
$\{ Q_v, T_e, T_e^\ast \mid v \in E^0, e \in E^1 \}$ is an
$E$-family inside $L(E_r(\mathcal{P}))$.  The $Q_v$'s are mutually
orthogonal idempotents because the $v_1$'s are.  It is immediate from
the definition above that whenever $v = s(e)$ in $E$, then $Q_v T_e=
T_e$ and $T_e^\ast Q_v = T_e^\ast$ in $L(E_r(\mathcal{P}))$, and
that whenever $w = r(e)$ in $E$, $T_e Q_w = T_e$ and $Q_w T_e^\ast =
T_e^\ast$ in $L(E_r(\mathcal{P}))$.  If $e \neq f$, then note that
$T_e^\ast T_f = x e_1^\ast f_1 y$ for some $x, y \in
L(E_r(\mathcal{P}))$, but since $e_1 \neq f_1$, this is zero.  Because $E$
and $E_r(\mathcal{P})$ contain no sinks, there is a CK2 relation at every
vertex of both graphs.  It is now a straightforward matter of computation
to check, by applying the CK1 and CK2 relations, that $T_e^\ast T_e =
Q_{r(e)}$, and that $\sum_{e \in s^{-1}(v)} T_e T_e^\ast = Q_v$.

By the Universal Homomorphism Property, then, there exists a $K$-algebra homomorphism
$\pi:L(E)\rightarrow L(E_r(\mathcal{P}))$ which maps $v \mapsto
Q_v$, $e \mapsto T_e$, and $e^\ast \mapsto T_e^\ast$.  It is easy to
verify that $\pi(v)$ is nonzero for any $v \in E^0$, so $\pi$ is a
nonzero homomorphism. We now claim that $\pi(L(E)) =
{\pi(1_{L(E)})}\,L(E_r(\mathcal{P}))\,{\pi(1_{L(E)})}$, where
$\pi(1_{L(E)}) = \sum_{v \in E^0} v_1$.

The inclusion  $\pi(L(E))
\subseteq {\pi(1_{L(E)})}\,L(E_r(\mathcal{P}))\,{\pi(1_{L(E)})}$ is immediate.
For the opposite inclusion, it suffices to consider arbitrary nonzero terms in
${\pi(1_{L(E)})}\,L(E_r(\mathcal{P}))\,{\pi(1_{L(E)})}$ of the form   $\mu_1 \mu_2^\ast$, where $\mu_1$ and
$\mu_2$ are finite length paths in $E_r(\mathcal{P})$, and $s(\mu_1)
= v_1$ and $s(\mu_2) = w_1$ for some $v,w \in E^0$, and where
$r(\mu_1) = r(\mu_2)$.

Let $\mu$ be any path in $E_r(\mathcal{P})$ such that $s(\mu) = v_1$
for some $v \in E^0$.  Define $r(\mu) = w_k$, where $w \in E^0$ and
$1 \leq k \leq m(w)$.  We now build a path $\nu$ in $E$, by
replacing each $v_i$ in $\mu$ with $v$ in $\nu$, and each $e_i$ in
$\mu$ with $e$ in $\nu$, so that $\nu$ is essentially the result of
removing subscripts from the edges and vertices of $\mu$.  An
induction on the length of $\mu$ will show that
\[\pi(\nu) = \mu \left(\sum_{f \in s^{-1}(w)} f_k f_1^\ast \right).\]
If the length of $\mu$ is zero, then $\mu = v_1 = w_k$.  Applying
the CK2 relation at $v_1$, we get
\[\pi(v) = v_1 \left(\sum_{f \in s^{-1}(v)} f_1 f_1^\ast \right).\]
Since $w = v$ and $k = 1$ in this case, this is the result we need.
 If the length of $\mu$ is greater than zero, then $\mu = \mu' e_j$,
where $r(\mu') = u_j$, $e \in E^1$, $u \in E^0$, and $1 \leq j \leq
m(u)$.  We define $\nu'$ in the same manner as above, so that from
the inductive hypothesis,
\[\pi(\nu) = \pi(\nu') T_e = \mu' \left(\sum_{f \in s^{-1}(u),g \in s^{-1}(w)} f_j f_1^\ast e_1 g_k g_1^\ast\right).\]
When $f \neq e$, we have $f_1^\ast e_1 = 0$ by the CK1 relation,
whereas when $f = e$, $f_1^\ast e_1 = r(e_1)$, which collapses into
the adjacent terms.  This expression therefore reduces to
\[\pi(\nu) = \mu' e_j \left(\sum_{g \in s^{-1}(w)} g_k g_1^\ast \right) = \mu \left(\sum_{g \in s^{-1}(w)} g_k g_1^\ast \right)\]
as desired.

Now, given $\mu_1 \mu_2^\ast \in
{\pi(1_{L(E)})}\,L(E_r(\mathcal{P}))\,{\pi(1_{L(E)})}$, we define
$\nu_1$ and $\nu_2$ in the manner given above.  By a direct
computation, it can be verified that $\pi(\nu_1 \nu_2^\ast) = \mu_1
\mu_2^\ast$, completing the argument that $\pi(L(E)) =
{\pi(1_{L(E)})}\,L(E_r(\mathcal{P}))\,{\pi(1_{L(E)})}$.

Applying yet again Lemma \ref{morita_lem}, we conclude that $L(E)$
is Morita equivalent to $L(E_r(\mathcal{P}))$, and that the Morita
equivalence restricts to the map above.
\end{proof}

If $F$ is an in-amalgamation of $E$ (i.e., if there exists a vertex partition $\mathcal{P}$ of $F$ for which $E=F_r(\mathcal{P})$), then we denote by
$$\Phi^{{\rm inam}} = (\Phi^{{\rm ins}})^{-1}$$
the Morita equivalence from $L(F){\rm -Mod} \rightarrow L(E){\rm -Mod}.$

As a brief remark, we remind the reader that the result established
here is not as general as possible.  In particular, the hypothesis
that $E$ contains no sources or sinks will be weakened in Corollary
\ref{insplitme}.  (The difficulties that are avoided by this hypothesis
are more notational than substantial.  Nevertheless, the result as
stated here is strong enough to serve us for our present goal.)

We now establish the fourth and final tool in our cache.  The
following definition is borrowed from \cite[Section 3]{Flow}.

\begin{defis}\label{def_outsplit}
{\rm Let $E = ( E^0 , E^1 , r , s )$ be a directed graph. For each
$v \in E^0$ with $s^{-1}(v) \ne \emptyset$, partition the set
$s^{-1} (v)$ into disjoint nonempty subsets $\mathcal{E}^1_v ,
\ldots , \mathcal{E}^{m(v)}_v$ where $m(v) \ge 1$. (If $v$ is a sink,
then we put $m(v)=0$.) Let $\mathcal{P}$ denote the resulting
partition of $E^1$. We form the {\em out-split graph} $E_s({\mathcal
P})$ from $E$ using the partition $\mathcal{P}$ as follows:
\begin{align*}
E_s ( \mathcal {P} )^0 &= \{ v^i \mid v \in E^0 , 1 \le i \le m(v)
\}
\cup \{ v \mid m(v)  = 0 \} ,  \\
E_s ( \mathcal{P} )^1  &= \{ e^j \mid e \in E^1, 1 \le j \le m ( r
(e) ) \} \cup \{ e \mid m (r(e)) = 0 \} ,
\end{align*}

\noindent and define $r_{E_s ( \mathcal{P} )} , s_{E_s (
\mathcal{P} )} : E_s ( \mathcal{P} )^1 \rightarrow E_s (
\mathcal{P} )^0$ for each $e \in \mathcal{E}^i_{s(e)}$ by
\begin{align*}
s_{E_s ( \mathcal{P} )} ( e^j ) &= s(e)^i  \text{ and } s_{E_s (
\mathcal{P} )} ( e
) = s(e)^i \\
r_{E_s ( \mathcal{P} )} ( e^j ) &= r(e)^j \text{ and } r_{E_s (
\mathcal{P} )} ( e ) = r(e).
\end{align*}
\noindent Conversely, if $E$ and $G$ are graphs, and there exists a
partition ${\mathcal P}$ of $E^1$ for which $E_s({\mathcal P})=G$,
then $E$ is called an {\em out-amalgamation} of $G$. }
\end{defis}

\begin{exem}
{\rm Let $E$ be the graph: $$\xymatrix{\bullet^{v} \ar@(dl,ul)
\ar@/^/[r] & \bullet^{w} \ar@/^/[l]}$$ Denote by $\mathcal{P}$ the
partition of $E^1$ that places each edge in its own singleton
partition class.  Then $E_s(\mathcal{P})$ is:
$$\xymatrix{\bullet^{v^1} \ar[d] \ar@(dl,ul) & \bullet^{w^1}
\ar@/^/[dl] \ar[l] \\ \bullet^{v^2} \ar@/^/[ur]}$$ }
\end{exem}

Our fourth tool follows as a specific case of a result previously
established in \cite{AbAnhLouP}.

\begin{prop}\label{outsplittingprop}
Let $E$ be a finite graph, $\mathcal{P}$ a partition of $E^1$ as
in Definitions \ref{def_outsplit}, and $E_s(\mathcal{P})$ the
out-split graph from $E$ using $\mathcal{P}$.  Then $L(E)$ is isomorphic to $L(E_s(\mathcal{P}))$.  This isomorphism
yields a Morita equivalence
$$\Phi^{{\rm outs}}: L(E){\rm -Mod} \rightarrow L(E_s(\mathcal{P})){\rm -Mod}$$
 for which
$\Phi^{{\rm outs}}_{{\mathcal V}}([L(E)v]) = [L(E_s(\mathcal{P}))\sum_{i=1}^{m(v)}v^i]$ for every vertex $v$ of $E$.
\end{prop}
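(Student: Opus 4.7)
The plan is to invoke the isomorphism $L(E) \cong L(E_s(\mathcal{P}))$ established in \cite{AbAnhLouP}, and then trace its effect on vertex idempotents to pin down the induced map on $\mathcal{V}^*$. Concretely, the isomorphism $\varphi: L(E) \to L(E_s(\mathcal{P}))$ arises via the Universal Homomorphism Property applied to the $E$-family
\[
\varphi(v) = \sum_{i=1}^{m(v)} v^i, \qquad \varphi(e) = \sum_{j=1}^{m(r(e))} e^j, \qquad \varphi(e^*) = \varphi(e)^*,
\]
inside $L(E_s(\mathcal{P}))$, with the conventions $\varphi(v) = v$ when $v$ is a sink and $\varphi(e) = e$ when $r(e)$ is a sink. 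Verifying the Leavitt relations is routine: orthogonality and idempotence of the $\varphi(v)$'s follow from that of the $v^i$'s; the source/range relations are immediate from the construction of $E_s(\mathcal{P})$; the CK1 relations rely on the observation that for $j \neq l$ one has $r(e^j) = r(e)^j \neq r(e)^l = r(e^l)$, which forces $e^j(e^l)^* = 0$; and the CK2 relation at $v$ becomes the sum of the CK2 relations at $v^1,\dots,v^{m(v)}$.

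An inverse $\psi: L(E_s(\mathcal{P})) \to L(E)$ is then supplied (as in \cite{AbAnhLouP}) by the $E_s(\mathcal{P})$-family defined on generators by $\psi(v^i) = \sum_{e \in \mathcal{E}^i_v} ee^*$ and $\psi(e^j) = e \sum_{g \in \mathcal{E}^j_{r(e)}} gg^*$, with the evident conventions at sinks. That $\psi$ is well-defined reduces once more to CK1 and CK2 in $L(E)$, and a short computation using the same vanishing identity $e^j(e^l)^* = \delta_{jl}\, e^j(e^j)^*$ shows that $\varphi \circ \psi$ and $\psi \circ \varphi$ act as the identity on generators, so $\varphi$ is an isomorphism.

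Finally, any ring isomorphism $\varphi: R \to S$ induces a Morita equivalence by restriction of scalars along $\varphi^{-1}$, which sends the finitely generated projective $Re$ to $S\varphi(e)$ for each idempotent $e \in R$. Applied here, $L(E)v$ maps to $L(E_s(\mathcal{P}))\varphi(v) = L(E_s(\mathcal{P}))\sum_{i=1}^{m(v)} v^i$, which is exactly the stated formula for $\Phi^{\rm outs}_{\mathcal V}$. The principal obstacle is bookkeeping rather than conceptual: writing down a correct formula for $\psi$ on the edges $e^j$ so that its interaction with the partition $\mathcal{P}$ aligns with CK1 and CK2 in $L(E_s(\mathcal{P}))$. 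Since that calculation is already carried out in \cite{AbAnhLouP}, all we need to add is the explicit description of $\varphi$ on vertices in order to read off $\Phi^{\rm outs}_{\mathcal V}$.
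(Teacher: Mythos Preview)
Your proposal is correct and follows exactly the same approach as the paper: both invoke the isomorphism from \cite[Theorem 2.8]{AbAnhLouP} and observe that it sends $v \mapsto \sum_{i=1}^{m(v)} v^i$, from which the formula for $\Phi^{\rm outs}_{\mathcal{V}}$ follows immediately. The paper's own proof is in fact just two sentences citing \cite{AbAnhLouP} and recording the image of $v$, so your additional detail (writing out the $E$-family, the inverse $\psi$, and the verifications) is more than is needed here, though none of it is wrong.
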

\begin{proof}
The indicated isomorphism between $L(E)$ and $L(E_s(\mathcal{P}))$ is established in \cite[Theorem 2.8]{AbAnhLouP}.  Furthermore, the isomorphism
given there
maps $v$ to $\sum_{i=1}^{m(v)} v^i$, so that the associated Morita equivalence
restricts to the desired map.
\end{proof}

If $F$ is an out-amalgamation of $E$ (i.e., if there exists a vertex partition $\mathcal{P}$ of $F$ for which $E=F_s(\mathcal{P})$), then we denote by
$$\Phi^{{\rm outam}} = (\Phi^{{\rm outs}})^{-1}$$
the Morita equivalence from $L(F){\rm -Mod} \rightarrow L(E){\rm -Mod}.$

Having  built a sufficient arsenal of graph operations, we now
proceed toward the first main result of this article.  Considerable
work has been done in the flow dynamics community regarding  the
theory of subshifts of finite type; specifically, an explicit
description of the flow equivalence relation has been achieved for a
large class of such shifts.  We refer the interested reader to
\cite{L-M} for a clear, careful introduction to the theory,
including the definition of flow equivalence.  For our purposes, the
following definitions and results will provide all of the connecting
information we need.

\begin{defis}{\rm
Let $E$ be a finite (directed) graph.  Then $E$ is:

(1) \emph{irreducible} if given any two vertices $v$ and $w$ in $E$,
there is a path from $v$ to $w$  (\cite[Definition 2.2.13]{L-M}),

(2) \emph{essential} if there are neither sources nor sinks in $E$
(\cite[Definition 2.2.13]{L-M}), and

(3) \emph{trivial} if $E$ consists of a single cycle with no other
vertices or edges (\cite{Franks}). }
\end{defis}

Here is an easily verified observation which will be useful later.

\begin{lem}\label{keepcond}
Let $E$ be a finite graph, let $v\in E^0$, and let $\mathcal{P}$ be a partition of the vertices of $E$.
Then  $E$ is essential (resp. nontrivial, resp. irreducible) if and only if $E_s(\mathcal{P})$,
$E_r(\mathcal{P})$, and $E_v$  are each essential (resp. nontrivial, resp. irreducible).
\end{lem}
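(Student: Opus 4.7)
The plan is to verify each of the nine pairings (three properties under three operations) directly from the definitions, by tracking how vertices, edges, and paths correspond between $E$ and its modification. I would organize the argument by property rather than by operation, since the style of verification is uniform across operations within each property, and I would handle both directions of each biconditional simultaneously via explicit correspondences.

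For the essential property, I would exhibit an explicit source/sink correspondence. In $E_v$, the new vertex $v^*$ absorbs the potential sink-role of $v$ (all edges leaving $v$ in $E$ now leave from $v^*$, while $v$ retains $f$ as outgoing) and is never a source since $f$ enters it; every other vertex is unaffected, and $v$ itself is never a sink in $E_v$. In $E_s(\mathcal{P})$, a vertex $v^i$ is a source iff $v$ is a source in $E$, and $v^i$ is never a sink because the partition class $\mathcal{E}^i_v$ is nonempty by definition; sinks of $E$ remain as the unsubscripted vertices of $E_s(\mathcal{P})$. The case of $E_r(\mathcal{P})$ is dual. In all three cases, sources and sinks of $E$ correspond (up to multiplicity of copies) with those of the modified graph, yielding the equivalence.

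For irreducibility, I would describe path-lifting and path-projection procedures. A path in $E$ lifts to $E_v$ by inserting a traversal of $f$ whenever the path exits $v$; conversely, any path in $E_v$ projects to one in $E$ by identifying $v^*$ with $v$ and deleting occurrences of $f$. For the in-/out-split operations, a path $e_1 \cdots e_n$ in $E$ lifts uniquely to a path in the split graph once the initial subscript/superscript is chosen appropriately, and conversely any path in the split graph projects to a path in $E$ by forgetting indices. In all three cases this exhibits a bijective correspondence between connectivity data in the two graphs, transferring irreducibility in both directions.

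Finally, for nontriviality, I would observe that if $E$ is a single cycle then every vertex has in-degree and out-degree equal to $1$, so the partitions in the in-/out-split constructions are forced to have $m(v) = 1$ everywhere, producing a graph canonically isomorphic to $E$; meanwhile $E_v$ merely lengthens the cycle by one vertex and one edge. Conversely, triviality of the modified graph together with the path/degree correspondence above forces $E$ to be a single cycle. The only place where any care is needed is the index-matching in the path-tracking step for the in-/out-splits; I expect this to be the main obstacle, though it is bookkeeping rather than substantive difficulty.
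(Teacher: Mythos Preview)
Your proposal is correct and is exactly the direct case-by-case verification that the paper has in mind; the paper itself gives no proof, merely describing the lemma as ``an easily verified observation'' and leaving the details to the reader. Your organization by property (with explicit source/sink tracking, path lifting/projection, and degree-counting for triviality) is precisely the kind of routine check the authors are omitting.
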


A set of graphs of great interest in the theory of subshifts of
finite type are those that are simultaneously irreducible,
essential, and nontrivial.  The following connecting result is
pivotal here.

\begin{lem}\label{graphconditions}
Let $E$ be a finite graph.  The following are equivalent:

(1) $E$ is irreducible, nontrivial, and essential.

(2) $E$ contains no sources, and $L(E)$ is purely infinite simple.
\end{lem}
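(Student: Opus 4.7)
The plan is to verify both implications using the Simplicity Theorem and the Purely Infinite Simplicity Theorem from the introduction.

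For $(1)\Rightarrow(2)$, essentiality of $E$ immediately delivers the ``no sources'' part. To show $L(E)$ is purely infinite simple, I would apply the Purely Infinite Simplicity Theorem, reducing to checking simplicity plus existence of a cycle. A cycle exists because $E$ is finite and essential, so iterating out-edges from any vertex must eventually revisit. For simplicity, I would verify the two conditions of the Simplicity Theorem: the path-to-cycle condition follows from irreducibility (there are no sinks), and the ``every cycle has an exit'' condition I would prove by contradiction---if a cycle $c$ had no exit, then every out-edge from a vertex of $c$ would be a cycle edge, so irreducibility would force every vertex onto $c$; essentiality (no sources) would then rule out any additional edges, and $E$ would reduce to a single cycle, contradicting nontriviality.

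For $(2)\Rightarrow(1)$, ``no sinks'' follows from the remark after the Purely Infinite Simplicity Theorem, so combined with the hypothesis of no sources, $E$ is essential. If $E$ were a single cycle then that cycle would have no exit, violating simplicity, so $E$ is nontrivial. The key remaining work is irreducibility: given $v,w\in E^0$, I plan to concatenate three pieces. First, $v$ has a path to some cycle $c_v$ by the Simplicity Theorem. Second, backward-tracing from $w$ using the no-sources hypothesis produces a predecessor chain $w\leftarrow u_1\leftarrow u_2\leftarrow\cdots$, which must repeat by finiteness; choosing the first repetition $u_i=u_j$ yields a cycle $c_w$ through $u_i$, together with a forward path $u_i\to u_{i-1}\to\cdots\to u_0=w$ from $c_w$ down to $w$. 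Third, the Simplicity Theorem furnishes a path from any vertex of $c_v$ to some vertex of $c_w$. Splicing these pieces---traversing $c_v$ and $c_w$ as needed to align the connection points---gives a path from $v$ to $w$; the same construction delivers a closed path of positive length when $v=w$.

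The main anticipated obstacle is the backward-tracing step, where one must justify that the first repetition $u_i=u_j$ genuinely yields a cycle (with distinct intermediate sources, not merely an arbitrary closed path) and that this cycle can in fact be connected through the $u_k$'s to $w$; the former follows from the minimality of $j$, and the latter is built into the construction. The only other subtle point is ensuring in $(1)\Rightarrow(2)$ that a hypothetically exit-free cycle $c$, together with irreducibility and essentiality, really forces $E=c$---which hinges on noticing that once all vertices lie on $c$, essentiality precludes any further edges entering or leaving cycle vertices.
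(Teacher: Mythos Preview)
Your proposal is correct and follows essentially the same strategy as the paper's proof: verifying the Simplicity and Purely Infinite Simplicity Theorem hypotheses for $(1)\Rightarrow(2)$, and extracting essentiality, nontriviality, and irreducibility from those theorems for $(2)\Rightarrow(1)$. The paper's treatment of irreducibility is phrased slightly differently---it asserts that in a finite graph with no sources or sinks every vertex lies on a cycle, and then connects any two vertices via one application of the path-to-cycle property---but this is a minor variation on your three-piece concatenation; one small cleanup is that in your $(1)\Rightarrow(2)$ argument the appeal to essentiality to ``rule out additional edges'' is unnecessary, since once every vertex lies on the exit-free cycle $c$, every edge already has its source on $c$ and hence must be a $c$-edge.
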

\begin{proof}
Suppose first that $E$ is irreducible, essential, and nontrivial.  That
$E$ contains no sources is immediate.  $E$ also contains no sinks
and is finite, so it must contain a cycle.  Since $E$ is
nontrivial, there must exist some edge or vertex not in any cycle,
and either that edge or the path from the cycle to that vertex is an
exit to the cycle.  Finally, since $E$ is irreducible, there is a
path between any two vertices, so there must be a path from any
vertex to any cycle.

Conversely, suppose $E$ contains no sources, and that $L(E)$ is
purely infinite simple.  From the Simplicity Theorem \cite[Theorem
3.11]{AA1}, every cycle has an exit, so $E$ is nontrivial.  From
\cite[Lemma 2.8]{APS}, there is a path from any vertex to any cycle.
Since by the Purely Infinite Simplicity Theorem \cite[Theorem
11]{AA2} there is at least one cycle in the graph, there are no
sinks.  Then $E$ is essential.  However, with no sources or sinks in
a finite graph, every vertex must belong to a cycle, so there is a
path between any two vertices, and $E$ is irreducible.
\end{proof}

Much of the heavy lifting required to achieve our first goal is
provided by deep, fundamental work in flow dynamics.  We collect up
all the relevant facts in the following few results, then state as
Corollary \ref{flowsequenceforLPAs} the conclusion we need to achieve our
goal. (Following Franks, we state some results in the language of
matrices.  Statements about non-negative integer matrices may be
interchanged with statements about directed graphs by exchanging $E$
for its incidence matrix $A_E$ as described in the Introduction.)

\begin{defis}\label{standarddef}
{\rm We call a graph transformation {\it standard} if it is one of these six types: in-splitting, in-amalgamation, out-splitting, out-amalgamation, expansion, and contraction.  Analogously, we call
a function which transforms a non-negative integer matrix $A$ to a non-negative integer matrix $B$  {\it standard} if the corresponding
graph operation from $E_A$ to $E_B$ is standard.
}
\end{defis}

\begin{defis}\label{flowequivdef}
{\rm If $E$ and $F$ are graphs, a {\it flow equivalence from $E$ to $F$} is a sequence $E = E_0 \rightarrow E_1 \rightarrow \cdots \rightarrow E_n = F$ of graphs and standard graph transformations which starts at $E$ and ends at $F$.  We say that $E$ and $F$ are {\it flow equivalent} in case there is a flow equivalence from $E$ to $F$.    Analogously, a {\it flow equivalence} between matrices $A$ and $B$ is defined to be a flow equivalence between the graphs $E_A$ and $E_B$.}
\end{defis}

We note that the notion of flow equivalence can be described in topological terms (see e.g. \cite{L-M}). The definition given here is optimal for our purposes.  Specifically, it  agrees with the topologically-based definition for essential graphs by an application of \cite[Theorem]{P-S},  \cite[Corollary 4.4.1]{W}, and  \cite[Corollary 7.15]{L-M}.   Since all graphs under consideration in our main results are essential, this particular definition of flow equivalence will serve us most efficiently.

\begin{theor} \textrm{\cite[Theorem]{Franks}}\label{FranksTheorem} (``Franks' Theorem")
Suppose that $A$ and $B$ are non-negative irreducible square integer
matrices neither of which is in the trivial flow equivalence class.
Then the matrices $A$ and $B$ are flow equivalent if and only if:
\[
\det(I_n-A) = \det(I_m-B) \ \mbox{ and } \
\Z^n /(I_n-A)\Z^n \cong \Z^m /(I_m-B)\Z^m,
\]
where $n \times n$ and $m \times m$ are the sizes of $A$ and $B$ respectively, $I_n$
and $I_m$ are identity matrices, and $\Z^n /(I_n-A)\Z^n$ (resp.
$\Z^m /(I_m-B)\Z^m$) denotes the image in $\Z^n$ (resp. $\Z^m$) of
the linear transformation $I_n-A: \Z^n \rightarrow \Z^n$ (resp.
$I_m-B: \Z^m \rightarrow \Z^m$) induced by matrix multiplication.
\end{theor}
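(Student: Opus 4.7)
Since Franks' theorem is a foundational result of symbolic dynamics being cited here rather than developed from within the paper, my plan is to outline the structure of a proof rather than attempt a full reconstruction. The forward direction, that flow equivalence implies the two invariants agree, is the more tractable half. I would verify that for each of the six standard moves of Definition \ref{standarddef}, the corresponding transformation $A \mapsto B$ of incidence matrices preserves both $\det(I-A)$ and the Bowen-Franks group $\Z^n/(I_n-A)\Z^n$. The key observation is that each standard move corresponds, after possibly augmenting one matrix by a trivial diagonal block, to a finite sequence of elementary integer row and column operations on $I-A$; such operations preserve the Smith normal form of the matrix and therefore preserve both the determinant and the cokernel simultaneously. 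One does this case by case: expansion inserts a row and column in a very rigid way, out-splitting and in-splitting replace a row (resp.\ column) by an explicit block partition, and the amalgamation moves are simply the inverses.

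For the reverse direction, which is the substance of Franks' contribution, the strategy proceeds in two stages. First, one invokes the theorem of Parry and Sullivan, which shows that flow equivalence of irreducible nontrivial subshifts of finite type is generated by state splitting/amalgamation together with a single ``elementary expansion'' move on the incidence matrix. Bowen and Franks had previously established that $\Z^n/(I_n-A)\Z^n$ together with the sign of $\det(I_n-A)$ is a flow invariant. Franks' contribution is to prove \emph{completeness} of this invariant in the irreducible nontrivial case: given matrices $A$ and $B$ with matching invariants, exhibit an explicit chain of standard moves from $E_A$ to $E_B$. The construction proceeds by reducing both $I-A$ and $I-B$ to a common canonical form dictated by the Smith normal form data, and then showing by a careful induction on the Smith invariants that two matrices in identical canonical form lie in the same flow equivalence class.

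The main obstacle is this reverse direction, which rests on intricate combinatorial manipulation of non-negative integer matrices and on the topological theory of flow equivalence that lies well outside the scope of this paper. For our purposes, however, it suffices to treat Franks' theorem as a black box; the decisive feature for what follows is that the flow equivalence it produces is a finite composition of the six standard graph moves, each of which we have independently shown (via Propositions \ref{sourceelimnationprop}, \ref{expansionprop}, \ref{insplittingprop}, and \ref{outsplittingprop}) to induce a Morita equivalence of the corresponding Leavitt path algebras.
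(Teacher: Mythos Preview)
Your recognition that this theorem is cited rather than proved in the paper is exactly right: the paper gives no proof whatsoever of Franks' Theorem, treating it purely as an external input from \cite{Franks}. Your outline of the two directions, with the forward half verified move-by-move and the reverse half resting on the Parry--Sullivan and Bowen--Franks machinery culminating in Franks' completeness argument, is an accurate high-level summary of how the original proof is structured, and is appropriate as commentary on a black-box citation.

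One small inaccuracy in your final paragraph: Proposition~\ref{sourceelimnationprop} (source elimination) is \emph{not} one of the six standard moves of Definition~\ref{standarddef}, and is not part of what Franks' theorem produces. Source elimination is invoked separately in the paper (Corollary~\ref{sourceelimtogetMoritaequiv}) to pass from an arbitrary graph with $L(E)$ purely infinite simple to one with no sources, so that Franks' theorem becomes applicable; the flow equivalence delivered by Franks then decomposes only into the moves handled by Propositions~\ref{expansionprop}, \ref{insplittingprop}, and~\ref{outsplittingprop} together with their inverses.
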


\begin{corol}\label{flowsequence}  Suppose $A$ and $B$ are irreducible,
nontrivial, essential square non-negative integer matrices for which
$$\det(I_n-A) = \det(I_m-B) \ \ \ \mbox{and} \ \ \ \Z^n /(I_n-A)\Z^n \cong \Z^m /(I_m-B)\Z^m.$$
Then there exists a sequence of standard transformations
which starts with $A$ and ends with $B$.
\end{corol}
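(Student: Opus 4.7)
The plan is to apply Franks' Theorem (Theorem \ref{FranksTheorem}) directly, and then reconcile the version of flow equivalence that it produces with the combinatorial definition via standard transformations in Definition \ref{flowequivdef}. Since $A$ and $B$ are assumed to be irreducible, non-negative integer square matrices that are nontrivial and essential, they lie in the scope of Franks' Theorem; the two numerical/group-theoretic hypotheses $\det(I_n-A)=\det(I_m-B)$ and $\Z^n/(I_n-A)\Z^n \cong \Z^m/(I_m-B)\Z^m$ are exactly the invariants appearing in Franks' Theorem. Hence $A$ and $B$ are flow equivalent in the classical (topological) sense.

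It then remains to convert this topological flow equivalence into an explicit finite sequence of standard matrix transformations. This is precisely the point flagged in the paragraph following Definition \ref{flowequivdef}: for essential graphs, the topological notion of flow equivalence coincides with the combinatorial notion used throughout this paper, a reconciliation supplied by the cited results of Parry--Sullivan, Williams, and Lind--Marcus. Applying that reconciliation to $E_A$ and $E_B$ (both of which are essential by hypothesis) yields a finite chain
\[
E_A = E_0 \longrightarrow E_1 \longrightarrow \cdots \longrightarrow E_n = E_B
\]
in which each arrow is an in-splitting, in-amalgamation, out-splitting, out-amalgamation, expansion, or contraction. Reading this chain at the level of incidence matrices produces the desired sequence of standard transformations from $A$ to $B$.

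The only real content is the translation step between the two notions of flow equivalence; but since essentiality is assumed, this is handed to us by the literature, and the corollary reduces to essentially quoting Franks' Theorem followed by Definition \ref{flowequivdef}. I therefore do not expect a genuine obstacle beyond citing the correct reconciliation result for essential graphs.
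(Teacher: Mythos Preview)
Your proposal is correct and matches the paper's (implicit) argument exactly: the corollary is stated without proof in the paper, being an immediate consequence of Franks' Theorem together with the reconciliation, flagged in the paragraph after Definition~\ref{flowequivdef}, between the topological and combinatorial notions of flow equivalence for essential graphs via the results of Parry--Sullivan, Williams, and Lind--Marcus.
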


As is usual, we denote $\Z^n /(I_n-A)\Z^n$ simply by ${\rm
coker}(I_n-A)$. By examining the Smith normal form of each matrix,
it is easy to show that ${\rm coker}(I_n-A) \cong {\rm
coker}(I_n-A^t)$ for any square matrix $A$.  Furthermore, by a
cofactor expansion, it is  clear that ${\rm det}(I_n-A)={\rm
det}(I_n-A^t) = {\rm det}(I_n-A)^t$.

If $E$ is a graph for which $L(E)$ is purely infinite simple unital,
then by \cite[Section 3]{AbAnhLouP} there is an isomorphism
$${\rm coker}(I-A_E^t) \rightarrow K_0(L(E)),$$
for which $\overline{x_i} \mapsto [L(E)v_i]$ for each standard basis
vector $x_i$ of $\Z^n$ and each vertex $v_i$ of $E$, $1\leq i \leq n$.   (We note for future use that since $1_{L(E)} = \sum_{v\in E^0}v$ in $L(E)$, this isomorphism takes the element $\sum_{i=1}^n \overline{x_i}$ of ${\rm coker}(I-A_E^t)$ to $[L(E)] \in K_0(L(E))$.)

Thus, using Lemma \ref{graphconditions}, we may
restate Corollary \ref{flowsequence} as follows:

\begin{corol}\label{flowsequenceforLPAs}
Suppose $G$ and $H$ are finite graphs without sources, for which $L(G)$ and
$L(H)$ are purely infinite simple. Suppose
$$\det(I_n-A_G^t) = \det(I_m-A_H^t) \ \ \ \mbox{and} \ \ \ K_0(L(G)) \cong K_0(L(H)),$$
where $n = | G^0 |$ and $m = | H^0 |$.  Then there exists a sequence
of standard graph transformations which starts with $G$
and ends with $H$.
\end{corol}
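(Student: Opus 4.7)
The plan is to reduce this corollary directly to Corollary \ref{flowsequence} by translating every hypothesis from the Leavitt path algebra setting into the corresponding matrix statement about $A_G$ and $A_H$.

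First I would invoke Lemma \ref{graphconditions}: since $G$ and $H$ are finite graphs (because $L(G)$ and $L(H)$ are unital) without sources whose Leavitt path algebras are purely infinite simple, both $G$ and $H$ are irreducible, nontrivial, and essential. Consequently $A_G$ and $A_H$ are irreducible, nontrivial, essential, square non-negative integer matrices, and thus satisfy the structural hypotheses of Corollary \ref{flowsequence}.

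Next I would convert the two numerical hypotheses from their $A^t$ form into the $A$ form. For the determinant I would apply $\det(M) = \det(M^t)$ to $M = I_n - A_G$ (and the analogous equation for $H$), so that the hypothesis $\det(I_n - A_G^t) = \det(I_m - A_H^t)$ becomes $\det(I_n - A_G) = \det(I_m - A_H)$. For the cokernel I would combine two facts already recorded in the text: the isomorphism $K_0(L(E)) \cong {\rm coker}(I - A_E^t)$ whenever $L(E)$ is purely infinite simple unital, and the Smith-normal-form observation that ${\rm coker}(I - A) \cong {\rm coker}(I - A^t)$ for any square integer matrix $A$. Chaining these gives ${\rm coker}(I_n - A_G) \cong K_0(L(G)) \cong K_0(L(H)) \cong {\rm coker}(I_m - A_H)$.

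With both hypotheses of Corollary \ref{flowsequence} now verified for $A_G$ and $A_H$, that corollary produces a sequence of standard matrix transformations starting at $A_G$ and ending at $A_H$. By Definition \ref{standarddef}, each such matrix transformation is, by definition, the matrix-level counterpart of a standard graph transformation on the associated graphs, so this sequence yields a sequence of standard graph transformations from $E_{A_G} = G$ to $E_{A_H} = H$, as required. There is no real obstacle here: once Lemma \ref{graphconditions}, the identification of $K_0$ with the appropriate cokernel, and Franks' theorem are in hand, the argument is a pure bookkeeping exercise. The only mild subtlety is the transpose appearing in the identification $K_0(L(E)) \cong {\rm coker}(I - A_E^t)$, but this is absorbed by the transpose-invariance of both the determinant and the cokernel.
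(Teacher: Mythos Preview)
Your proposal is correct and follows essentially the same route as the paper: the paper also derives this corollary as a direct restatement of Corollary~\ref{flowsequence}, using Lemma~\ref{graphconditions} to get the irreducible/nontrivial/essential hypotheses, the identification $K_0(L(E))\cong {\rm coker}(I-A_E^t)$, and the transpose-invariance of both determinant and cokernel. Your handling of the transpose subtlety and the translation between matrix and graph transformations via Definition~\ref{standarddef} matches the paper's implicit reasoning exactly.
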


Our interest here will be in graphs $E$ and $F$ for which
$\det(I_n-A_E^t) = \det(I_m-A_F^t)$ and $K_0(L(E)) \cong K_0(L(F))$.
The following notation will prove convenient.

\begin{defi}{\rm Let $E$ be a finite graph.  The {\it determinant Franks pair}
is the ordered pair
$${\mathcal F}_{det}(E) = ( \ K_0(L(E)) \ , \ \det(I_n-A_E^t) \ )$$
consisting   of the abelian group $K_0(L(E))$ and the integer
$\det(I_n-A_E^t)$.  For finite graphs $E,G$ we write
$${\mathcal F}_{det}(E) \equiv {\mathcal F}_{det}(G)$$
in case there exists an abelian group isomorphism $K_0(L(E))\cong
K_0(L(G))$, and  $\det(I_n-A_E^t) = \det(I_m-A_G^t)$.  Clearly $\equiv$ yields an equivalence relation on the set of finite graphs. }
\end{defi}

We now show that the source elimination process preserves
equivalence of the determinant Franks pair.

\begin{lem}\label{samedet}
Let $E$ be a finite graph  for which $L(E)$ is
purely infinite simple, and let $v$ be a source in $E$.  Then
$${\mathcal F}_{det}(E) \equiv {\mathcal F}_{det}(E_{\backslash
v}).$$
\end{lem}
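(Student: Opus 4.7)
\bigskip

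My plan is to verify the two ingredients of $\equiv$ separately: the $K_0$-isomorphism and the equality of determinants.

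For the $K_0$-isomorphism, I would simply invoke Proposition \ref{sourceelimnationprop}, which gives a Morita equivalence $L(E_{\backslash v}) \sim_M L(E)$. Since Morita equivalent rings have isomorphic $K_0$ groups, we immediately obtain $K_0(L(E_{\backslash v})) \cong K_0(L(E))$. (Alternatively, one can note that the induced semigroup isomorphism $\Phi^{\mathrm{elim}}_{\mathcal V}$ extends to a $K_0$-isomorphism, since both rings are purely infinite simple unital once one checks, via the Simplicity and Purely Infinite Simplicity Theorems, that $L(E_{\backslash v})$ inherits these properties.)

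For the determinant equality, the idea is to reorder the vertices so that $v = v_1$ and then expand the determinant along the row/column corresponding to $v$. Recall that $(A_E)_{ij}$ counts edges from $v_i$ to $v_j$, so the $j$-th column of $A_E$ sums to $|r^{-1}(v_j)|$. Because $v = v_1$ is a source, $r^{-1}(v) = \emptyset$, and hence the first column of $A_E$ is zero; equivalently, the first row of $A_E^t$ is zero. Therefore the first row of $I_n - A_E^t$ is $(1, 0, 0, \dots, 0)$, and expanding $\det(I_n - A_E^t)$ along this row yields
\[
\det(I_n - A_E^t) \;=\; \det\bigl(I_{n-1} - M\bigr),
\]
where $M$ is the $(n-1)\times(n-1)$ matrix obtained from $A_E^t$ by deleting its first row and first column. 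Since $v$ is a source, the edges incident to $v$ are exactly $s^{-1}(v)$, so deleting $v$ and those edges (i.e., forming $E_{\backslash v}$) removes precisely the first row and first column of $A_E$ (equivalently, of $A_E^t$). Thus $M = A_{E_{\backslash v}}^t$, and the equality $\det(I_n - A_E^t) = \det(I_{n-1} - A_{E_{\backslash v}}^t)$ follows.

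Combining the two parts gives ${\mathcal F}_{det}(E) \equiv {\mathcal F}_{det}(E_{\backslash v})$. I do not expect any real obstacle here: the $K_0$ piece is a direct application of the previously established Morita equivalence, and the determinant piece is a one-line cofactor expansion, made clean by the fact that a source contributes a zero column to $A_E$.
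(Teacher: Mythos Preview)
Your proposal is correct and follows essentially the same approach as the paper: the $K_0$-isomorphism comes directly from the Morita equivalence of Proposition \ref{sourceelimnationprop}, and the determinant equality is obtained by a cofactor expansion exploiting the zero column of $A_E$ (equivalently, the zero row of $A_E^t$) corresponding to the source $v$. Your write-up is in fact slightly more explicit than the paper's in identifying the minor with $A_{E_{\backslash v}}^t$, but the underlying argument is identical.
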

\begin{proof}
Let $n = |E^0|$.  Since $v$ is a source, $A_E$ contains a column of zeros. Then a
straightforward determinant computation by cofactors along this
column gives $\det(I_n-A_E^t) = \det(I_{n-1}-A_{E_{\backslash
v}}^t)$.

Since $E$ satisfies the conditions of the Purely Infinite Simplicity
Theorem it is clear by the construction that $E_{\backslash v}$ must
as well. But $L(E)$ and $L(E_{\backslash v})$ are Morita equivalent
by Proposition \ref{sourceelimnationprop}, so that their $K_0$
groups are necessarily isomorphic.
\end{proof}

Now we are ready to prove the first of our two main results.

\begin{theor}\label{Franksinvariantsufficient}
Let $E$ and $F$ be finite graphs such that $L(E)$ and $L(F)$ are
purely infinite simple.  Suppose that
$${\mathcal F}_{det}(E) \equiv {\mathcal F}_{det}(F);$$
that is, suppose
\[
\det(I_n-A_E^t) = \det(I_m-A_F^t) \ \ \ \mbox{and} \ \ \
K_0(L(E)) \cong K_0(L(F)),
\]
where $n$ and $m$ are the number of vertices in $E$ and $F$,
respectively.  Then $L(E)$ is Morita equivalent to $L(F)$.
\end{theor}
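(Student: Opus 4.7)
The plan is to combine Franks' theorem (in the form of Corollary \ref{flowsequenceforLPAs}) with the four Morita-equivalence-preserving graph operations (expansion/contraction, in-splitting/in-amalgamation, out-splitting/out-amalgamation, source elimination) established earlier in the section.

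First, I would reduce to the source-free case. By Corollary \ref{sourceelimtogetMoritaequiv} applied to $E$ (respectively $F$), there is a graph $E'$ (respectively $F'$) containing no sources and a Morita equivalence $L(E) \sim_M L(E')$ (respectively $L(F) \sim_M L(F')$). Since $L(E')$ and $L(F')$ are Morita equivalent to purely infinite simple unital rings, they are themselves purely infinite simple unital. By repeated application of Lemma \ref{samedet}, we also have ${\mathcal F}_{det}(E') \equiv {\mathcal F}_{det}(E)$ and ${\mathcal F}_{det}(F') \equiv {\mathcal F}_{det}(F)$, so ${\mathcal F}_{det}(E') \equiv {\mathcal F}_{det}(F')$.

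Second, I would apply Corollary \ref{flowsequenceforLPAs} to $E'$ and $F'$. This produces a flow equivalence, i.e.\ a finite sequence
\[ E' = E_0 \to E_1 \to \cdots \to E_k = F' \]
in which each arrow is a standard graph transformation (in-splitting, in-amalgamation, out-splitting, out-amalgamation, expansion, or contraction). To conclude that each step induces a Morita equivalence of Leavitt path algebras via Propositions \ref{expansionprop}, \ref{insplittingprop}, \ref{outsplittingprop} (and the inverse equivalences $\Phi^{{\rm cont}}$, $\Phi^{{\rm inam}}$, $\Phi^{{\rm outam}}$), I must check that every intermediate graph $E_i$ has $L(E_i)$ simple and unital, and (for the in-splitting step in particular) that $E_i$ has no sources or sinks. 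This is exactly where Lemma \ref{keepcond} together with Lemma \ref{graphconditions} does its work: since $E'$ is irreducible, nontrivial, and essential (by Lemma \ref{graphconditions}, as $L(E')$ is purely infinite simple and $E'$ has no sources), and each standard transformation preserves these three properties, every $E_i$ in the chain is irreducible, nontrivial, and essential, hence source-free, sink-free, and with $L(E_i)$ purely infinite simple unital.

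Third, I would compose: each $L(E_i) \sim_M L(E_{i+1})$ by the appropriate Proposition, whence $L(E') \sim_M L(F')$ by transitivity of Morita equivalence, and stringing everything together gives $L(E) \sim_M L(E') \sim_M L(F') \sim_M L(F)$.

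The main obstacle is really a bookkeeping one: verifying that the hypotheses needed by the intermediate propositions (simple unital, and in the in-splitting case no sources or sinks) persist along the flow-equivalence chain produced by Franks' theorem. That obstacle is fully discharged by Lemma \ref{keepcond} and Lemma \ref{graphconditions}, which guarantee that the three properties (irreducible, nontrivial, essential) are preserved under each standard transformation and are jointly equivalent to the ring-theoretic hypotheses required by Propositions \ref{expansionprop}, \ref{insplittingprop}, and \ref{outsplittingprop}. Aside from this, the proof is a direct concatenation of the tools already developed.
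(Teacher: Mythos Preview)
Your proposal is correct and follows essentially the same approach as the paper: reduce to source-free graphs via Corollary \ref{sourceelimtogetMoritaequiv} and Lemma \ref{samedet}, invoke Corollary \ref{flowsequenceforLPAs} to obtain a chain of standard transformations, use Lemmas \ref{keepcond} and \ref{graphconditions} to propagate the needed hypotheses along the chain, and then apply Propositions \ref{expansionprop}, \ref{insplittingprop}, \ref{outsplittingprop} step by step. The paper's proof is organized identically, and your identification of the hypothesis-propagation bookkeeping as the only nontrivial point matches the paper's emphasis.
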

\begin{proof}
By Corollary \ref{sourceelimtogetMoritaequiv} there exist graphs
$E'$ and $F'$ such that $E'$ and $F'$ contain no sources, and for
which $L(E) \sim_M L(E')$ and $L(F) \sim_M L(F')$.  By hypothesis,
and by applying Lemma \ref{samedet} at each stage of the source
elimination process, we have that
$$\det(I-A_{E'}^t) = \det(I-A_E^t) = \det(I-A_F^t) = \det(I-A_{F'}^t),$$
and that
$$K_0(L(E'))\cong K_0(L(E)) \cong K_0(L(F)) \cong
K_0(L(F')).$$ Furthermore, $L(E')$ and $L(F')$ are each purely
infinite simple unital (either use the Purely Infinite Simplicity Theorem,
or apply the fact that purely infinite simplicity is a Morita
invariant).    So Corollary \ref{flowsequenceforLPAs} applies, and we
conclude that there exists a finite sequence of elementary graph transformations,  which starts at $E'$ and ends at $F'$.  By Lemmas
\ref{keepcond} and \ref{graphconditions}, since $E'$ is purely infinite simple unital with no
sources, each time such an operation is applied the resulting graph
is again purely infinite simple unital with no sources.      Thus,
at each step of the sequence, we may apply the appropriate tool from
the cache consisting of Propositions \ref{expansionprop},
\ref{insplittingprop}, and \ref{outsplittingprop}, from which we
conclude that each step in the sequence preserves Morita equivalence
of the corresponding Leavitt path algebras. Combining these Morita
equivalences at each step then yields $L(E') \sim_M L(F')$.

As a result, we have
$$
L(E) \sim_M L(E') \sim_M L(F') \sim_M L(F),$$
and the theorem follows.
\end{proof}

An analysis of objects related to Leavitt path algebras, carried out along somewhat similar lines, is presented in \cite{CK}.  To wit, Cuntz and Krieger analyze the C$^*$-algebras $\mathcal{O}_A$ (the now-so-called {\it Cuntz-Krieger C$^*$-algebras}); these are C$^*$-algebras generated by partial isometries which satisfy relations analogous to those in Definition \ref{definition}.  The arguments utilized in the C$^*$-algebra context are based on topological and analytical properties of  Markov chains and  certain graph operations (e.g., those of \cite{P-S}).  The completely algebraic point of view used in this section, together with the specific construction presented in \cite{Franks}, infuses our approach with a more germane algebraic flavor.

\section{Sufficient conditions for isomorphisms between purely infinite
simple unital Leavitt path algebras.}\label{Isomorphism}

In this section we will use the techniques and results of the
previous section to investigate the problem of classifying   purely
infinite simple unital Leavitt path algebras up to isomorphism.
Specifically, in Corollary \ref{Th:TorsionfreeBigFish} we provide an
affirmative answer to the Classification Question for a wide class
of graphs. To help establish such a connection we introduce some
notation.

\begin{defi}\label{unitaryFrankspair}{\rm Let $E$ be a finite graph.
The {\it unitary Franks pair} is the ordered pair
$${\mathcal F}_{[1]}(E) = ( \ K_0(L(E)) \ , \ [1_{L(E)}] \ )$$
consisting of the abelian group $K_0(L(E))$ and the element
$[1_{L(E)}]$ of $K_0(L(E))$. For finite graphs $E,G$ we write
$${\mathcal F}_{[1]}(E) \equiv {\mathcal F}_{[1]}(G)$$
in case there exists an abelian group isomorphism $\varphi:
K_0(L(E))\rightarrow K_0(L(G))$ for which
$\varphi([1_{L(E)}])=[1_{L(G)}]$.    Clearly $\equiv$ yields an equivalence
relation on the set of finite graphs. }
\end{defi}

We will show that, in the case of Morita equivalent purely infinite simple
Leavitt path algebras over finite graphs, if the unitary Franks pair of
their graphs are equivalent, then the algebras are isomorphic.  The argument
relies on the adaptation to our context of the deep result of Huang \cite[Theorem 1.1]{Huang}.

Now suppose $E$ has $L(E)$ purely infinite simple unital, and has no sources.  Then by Lemma \ref{graphconditions} $E^t$ has these same properties.   Let
$$E^t = H_0 \rightarrow^{m_1} H_1 \rightarrow^{m_2} H_2 ... \rightarrow^{m_n} H_n = E^t$$
be a finite sequence of standard graph transformations which starts and ends with $E^t$.   We write $H_i = G_i^t$ (where $G_i = H_i^t$), and so we have a finite sequence of graph transformations
$$E^t = G_0^t \rightarrow^{m_1} G_1^t \rightarrow^{m_2} G_2^t ... \rightarrow^{m_n} G_n^t = E^t.$$

For any graph $G$ let $\tau_G: G \rightarrow G^t$ be the graph function which is the identity on vertices, but switches the direction of each of the edges. (This is simply the transpose operation on the corresponding incidence matrices.)   In particular, any one of the standard graph transformations
$$m: G^t_i \rightarrow G^t_{i+1}$$
 yields a graph transformation
 $$m' = \tau_{G_{i+1}}^{-1}\circ m \circ \tau_{G_i} : G_i \rightarrow G_{i+1}.$$

 \begin{lem}\label{standardlemma}
 If $m: G_i^t\rightarrow G_{i+1}^t$ is a standard  graph transformation, then $m'=\tau_{G_{i+1}}^{-1}\circ m \circ \tau_{G_i} : G_i \rightarrow G_{i+1}$ is also standard.
  \end{lem}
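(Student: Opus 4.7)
The plan is a case-by-case verification over the six types listed in Definition~\ref{standarddef}, resting throughout on the fact that transposition interchanges source and range: $s_{G^t}(e) = r_G(e)$ and $r_{G^t}(e) = s_G(e)$ for every edge $e$.

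I will first dispose of the four splitting and amalgamation types together. Suppose $m$ is an in-splitting of $G_i^t$ arising from a partition $\mathcal{P}$ subdividing each $r_{G_i^t}^{-1}(v)$. Since $r_{G_i^t}^{-1}(v) = s_{G_i}^{-1}(v)$, the same $\mathcal{P}$ already satisfies the hypothesis required to define an out-splitting of $G_i$. Comparing the formulas for source and range in Definitions~\ref{def_insplit} and~\ref{def_outsplit} directly (identifying $v_i$ with $v^i$), one checks that $(G_i^t)_r(\mathcal{P})$ and $(G_i)_s(\mathcal{P})$ are transposes of one another as graphs, so that $m' = \tau_{G_{i+1}}^{-1}\circ m \circ \tau_{G_i}$ is exactly the out-splitting of $G_i$ induced by $\mathcal{P}$. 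The case when $m$ is an out-splitting is entirely symmetric, and the amalgamation cases follow at once by taking inverses, since amalgamation is defined as the inverse of the corresponding splitting.

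Next I will treat expansion (with contraction then following as its inverse). Fix $v\in G_i^0$. Computing source and range in $((G_i^t)_v)^t$ directly from the definition of expansion gives the following description: the new edge $f$ satisfies $s(f) = v^*$ and $r(f) = v$; every edge of $G_i$ that originally ended at $v$ has its range changed to $v^*$; and every other edge of $G_i$ is unchanged. By contrast, the expansion $(G_i)_v$ has $f: v \to v^*$ and redirects the edges that originally started at $v$. The key observation is that the vertex bijection swapping $v \leftrightarrow v^*$ and fixing every other vertex, combined with the identity on edges, provides a graph isomorphism $((G_i^t)_v)^t \to (G_i)_v$. Since $v^*$ and $f$ are, in the definition of expansion, arbitrary symbols chosen from outside $E^0 \cup E^1$, this relabeling keeps us inside the class of standard expansions, so that $m'$ is itself an expansion of $G_i$ at $v$.

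The main obstacle is this last step: naively, the transpose of an expansion does not look like an expansion at all, because transposition reverses the direction of $f$ and swaps the roles of ``edges out of $v$'' and ``edges into $v$''. Resolving this requires noticing the explicit vertex swap $v \leftrightarrow v^*$, and using the fact that the conventions ``fresh symbol $v^*$'' and ``fresh symbol $f$'' in the expansion definition do not prescribe which side of the new edge keeps the old incoming versus old outgoing edges. The splitting and amalgamation cases, by contrast, transpose cleanly via the tautological bijection between their defining partitions and require no such relabeling.
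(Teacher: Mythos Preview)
Your proof is correct and follows the same case-by-case strategy the paper sketches (the paper simply asserts that expansions go to expansions, in-splittings to out-splittings, etc., and leaves the verification to the reader). You go further by making explicit the one genuinely non-obvious point: that the transpose of an expansion at $v$ only becomes an expansion after the vertex relabeling $v \leftrightarrow v^\ast$, a subtlety the paper's one-line statement suppresses.
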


  \begin{proof}
  We leave to the reader the straightforward check that
  \begin{enumerate}
  \item If $m$ is an expansion (resp. contraction), then $m'$ is an expansion (resp. contraction).
  \item If $m$ is an in-splitting (resp. out-splitting), then $m'$ is an out-splitting (resp. in-splitting).
  \item  If $m$ is an in-amalgamation (resp. out-amalgamation), then $m'$ is an out-amalgamation (resp. in-amalgamation).
  \end{enumerate}
  (We note that expansions (resp. contractions) remain expansions (resp. contractions) when passing to the transpose graph, but that the other four standard operations indeed become a different type of standard transformation on the transpose.)
  \end{proof}

  As a consequence of  Lemma \ref{standardlemma}, if we start with any finite sequence of standard graph transformations
$$E^t = H_0 \rightarrow^{m_1} H_1 \rightarrow^{m_2} H_2 ... \rightarrow^{m_n} H_n = E^t$$
which starts and ends with $E^t$,  then we get a corresponding finite sequence of standard graph transformations
$$E = G_0 \rightarrow^{m'_1} G_1 \rightarrow^{m'_2} G_2 ... \rightarrow^{m'_n} G_n = E$$
which starts and ends with $E$.

  By \cite[Lemma 3.7]{Huang}, for any graphs $E$ and $F$, any standard graph transformation $m:E\rightarrow F$ yields the so-called {\it induced} isomorphism
  $$\varphi_m: coker(I-A_{E}) \rightarrow coker(I-A_{F}).$$
  For each of the six types of standard graph transformations, the corresponding induced isomorphism is explicitly described in \cite[Lemma 3.7]{Huang}.  As a representative example of these induced isomorphisms, we offer the following description.   Suppose $m: E \rightarrow F$ is an in-splitting; that is,  $F = E_r(\mathcal{P})$ for some partition ${\mathcal P}$ of the edges of $E$.   By generalizing the construction of
Franks \cite[Theorem 1.7]{Franks} in the natural way, we define matrices
\[
R = \left(\begin{array}{cccc}
a_{11} & a_{12} & \cdots & a_{1n} \\
a_{21} & a_{22} & \cdots & a_{2n} \\
\vdots & \vdots & \ddots & \vdots \\
a_{m1} & a_{m2} & \cdots & a_{mn} \\
\end{array}\right)
\qquad
S = \left(\begin{array}{cccc}
1      & 0      & \cdots & 0      \\
\vdots & \vdots & \ddots & \vdots \\
1      & 0      & \cdots & 0      \\
0      & 1      & \cdots & 0      \\
\vdots & \vdots & \ddots & \vdots \\
0      & 1      & \cdots & 0      \\
\vdots & \vdots & \ddots & \vdots \\
0      & 0      & \cdots & 1      \\
\vdots & \vdots & \ddots & \vdots \\
0      & 0      & \cdots & 1      \\
\end{array}\right)
\]
where $a_{ij}$ is the number of edges in the $j$th partition class of $E^1$ that leave vertex $i$.  The
columns of $S$ correspond to vertices of $E$, and the rows to partition classes of $E^1$, where a $1$
indicates that a partition class contains edges entering the vertex.  With $R$ and $S$ so defined, it is straightforward to show that  $A_E=RS$ and $A_F=SR$.  By
\cite[Lemma 3.7]{Huang}, we get $[x] \mapsto [Rx]$ is the induced isomorphism on
${\rm coker}(I_n-A_E)$.

  As it turns out, the descriptions of the induced isomorphisms coming from expansions and contractions are somewhat different than  the descriptions of the induced isomorphisms coming from the other four types of standard graph transformations.   However, in each case, an explicit description of the induced isomorphism can be given as above.   We leave the details in the other five cases to the interested reader.   Here now is the connection between the Morita equivalences of Section \ref{Morita Equiv} and the induced isomorphisms given by Huang.

  \begin{prop}\label{samerestrictionprop}
  Let $G_i$ and $G_{i+1}$ be finite graphs.   Suppose $G_i$ has $L(G_i)$ purely infinite simple, and has no sources.  Suppose $m_i:G_i^t \rightarrow G_{i+1}^t$ is a standard graph transformation, and let $\varphi_{m_i}: coker(I-A_{G_i^t})\rightarrow coker(I-A_{G_{i+1}^t})$ be the induced isomorphism.    Let $m'_i: G_i \rightarrow G_{i+1}$ be the corresponding graph transformation, which, by Lemma \ref{standardlemma}, is also a standard transformation.   Let $\Phi^{m'_i}:L(G_i){\rm -Mod} \rightarrow L(G_{i+1}){\rm -Mod}$ be the Morita equivalence induced by $m'_i$ as described in Section \ref{Morita Equiv}.   Then, using the previously described identification between $K_0(L(G_i))$ and $coker(I-A^t_{G_i})$ (resp. between $K_0(L(G_{i+1}))$ and $coker(I-A^t_{G_{i+1}})$), we have
  $$\Phi^{m_i'}_{\mathcal{V}} = \varphi_{m_i}.$$

  \end{prop}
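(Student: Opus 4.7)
The plan is to verify the identity $\Phi^{m_i'}_{\mathcal{V}} = \varphi_{m_i}$ case by case, one case for each of the six standard transformation types. Under the identification $\mathrm{coker}(I - A_E^t) \cong K_0(L(E))$ recalled before Corollary \ref{flowsequenceforLPAs}, $[L(G_i)v]$ corresponds to the class $\overline{x_v}$ of the standard basis vector in $\mathrm{coker}(I - A_{G_i^t})$ (using $A_{G^t} = A_G^t$), and similarly for $G_{i+1}$. Since such classes generate, it suffices to check, for each vertex $v$ of $G_i$, that $\Phi^{m_i'}_{\mathcal{V}}([L(G_i)v])$ and $\varphi_{m_i}(\overline{x_v})$ agree in $K_0(L(G_{i+1})) \cong \mathrm{coker}(I - A_{G_{i+1}^t})$.

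By the proof of Lemma \ref{standardlemma}, the six standard transformations on $G_i^t$ translate through transposition as follows: expansions and contractions are preserved, whereas in-splittings/out-splittings and in-amalgamations/out-amalgamations swap. In each of the three ``forward'' cases on $G_i$ (expansion, in-splitting, out-splitting), Propositions \ref{expansionprop}, \ref{insplittingprop}, and \ref{outsplittingprop} already provide an explicit formula for $\Phi^{m_i'}_{\mathcal{V}}([L(G_i)v])$. For the three ``inverse'' cases (contraction, in-amalgamation, out-amalgamation), $\Phi^{m_i'}_{\mathcal{V}}$ is by construction the inverse of the corresponding forward Morita equivalence, so the formula can be deduced by inverting the formula for the forward direction after describing the ``extra'' generators of the larger graph's $K_0$ via short CK2-computations (for example, in an expansion $E = H_v$ one has $v = ff^*$ and $f^*f = v^*$, so $[L(H_v)v] = [L(H_v)v^*]$).

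On the other side, \cite[Lemma 3.7]{Huang} supplies an explicit matrix description of $\varphi_{m_i}$ for each of the six types of transformations; the excerpt itself records the description for in-splittings via the factorization $A_{G_i^t} = RS$, $A_{G_{i+1}^t} = SR$. With these formulas in hand, the verification in each case reduces to a direct combinatorial identity in $\mathrm{coker}(I - A_{G_{i+1}^t})$. As a representative instance, if $m_i$ is an in-splitting of $G_i^t$ (so $m_i'$ is an out-splitting of $G_i$, with the partition passing through the transpose), one checks that the $v$-th column of $R$ equals $\sum_{k=1}^{m(v)} x_{v^k}$ modulo the image of $I - A_{G_{i+1}^t}$, which matches Proposition \ref{outsplittingprop}'s formula $\Phi^{m_i'}_{\mathcal{V}}([L(G_i)v]) = [L(G_{i+1}) \sum_{k=1}^{m(v)} v^k]$. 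The remaining five cases are verified analogously.

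The main obstacle is bookkeeping: carefully matching vertex and edge labels between the transposed and untransposed graphs when translating Huang's matrix formula to the vertex-level formula of $\Phi^{m_i'}_{\mathcal{V}}$, and handling the three inverse cases, where Section \ref{Morita Equiv} does not directly describe $\Phi^{m_i'}_{\mathcal{V}}$ on every generator of the larger graph. In each case, however, the required supplementary $K_0$-identities follow from CK2 relations in $L(G_i)$ or $L(G_{i+1})$, so no further structural theorems beyond those already established are needed.
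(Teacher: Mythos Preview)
Your proposal is correct and follows essentially the same approach as the paper's own proof: a case-by-case comparison of the explicit formulas for $\Phi^{m_i'}_{\mathcal{V}}$ given in Propositions \ref{expansionprop}, \ref{insplittingprop}, and \ref{outsplittingprop} (together with their inverses) against Huang's explicit matrix description of $\varphi_{m_i}$ from \cite[Lemma 3.7]{Huang}, reduced to checking agreement on the generating classes $[L(G_i)v] \leftrightarrow \overline{x_v}$. The paper's proof is in fact terser than yours, simply asserting that this six-case verification is ``tedious but completely straightforward''; your added remarks on handling the inverse transformations via CK2 relations and on the transpose bookkeeping are exactly the sort of detail one would supply when actually carrying it out.
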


\begin{proof}
Each of the six types of isomomorphisms $\Phi^{m_i'}_{\mathcal{V}}: K_0(L(G_i))\rightarrow K_0(L(G_{i+1}))$ have been explicitly described in Section \ref{Morita Equiv}.  As indicated above, each of the six types of induced isomorphisms $\varphi_{m_i}: coker(I-A_{G_i^t})\rightarrow coker(I-A_{G_{i+1}^t})$ have been explicitly described in \cite[Lemma 3.7]{Huang}.  By definition we have $A_{G_i^t} = A^t_{G_i}$ (resp. $A_{G^t_{i+1}} = A^t_{G_{i+1}})$.   It is now a tedious but completely straightforward check to verify that, in all six cases, these isomorphisms agree.
\end{proof}

We are finally in position to adapt the result of Huang to our context. For a ring $R$, and an automorphism $\alpha$ of $K_0(R)$, we say a Morita equivalence $\Phi: R{\rm -Mod} \rightarrow R{\rm -Mod}$ {\it restricts to} $\alpha$ in case $\Phi_{\mathcal{V}} = \alpha.$

\begin{prop}\label{huangwithmorita}
Let $E$ be a finite graph for which  $L(E)$ is purely infinite simple.   Let $\alpha$ be any automorphism of $K_0(L(E))$.   Then there exists a Morita equivalence $\Phi : L(E){\rm -Mod} \to L(E){\rm -Mod}$ which restricts to $\alpha$.
\end{prop}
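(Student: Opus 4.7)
The plan is to run the machinery the authors have been carefully assembling for exactly this moment. First I would reduce to the case in which $E$ has no sources: by Corollary \ref{sourceelimtogetMoritaequiv} there is a sourceless graph $E'$ and a Morita equivalence $\Psi : L(E')\text{-Mod} \to L(E)\text{-Mod}$, which restricts to an isomorphism $\Psi_{\mathcal{V}} : K_0(L(E'))\to K_0(L(E))$. Set $\alpha' = \Psi_{\mathcal{V}}^{-1} \circ \alpha \circ \Psi_{\mathcal{V}}$. If I can produce a Morita auto-equivalence $\Phi'$ of $L(E')\text{-Mod}$ restricting to $\alpha'$, then $\Phi := \Psi \circ \Phi' \circ \Psi^{-1}$ will restrict to $\alpha$, and we are done. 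So it suffices to treat $E$ already without sources.

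Next, I would translate $\alpha$ to the combinatorial side. Under the isomorphism $K_0(L(E)) \cong \mathrm{coker}(I - A_E^t) = \mathrm{coker}(I - A_{E^t})$ recalled from \cite[Section~3]{AbAnhLouP}, the automorphism $\alpha$ corresponds to an automorphism $\beta$ of $\mathrm{coker}(I - A_{E^t})$. By Lemma \ref{graphconditions}, $E^t$ is irreducible, nontrivial, and essential (these properties are preserved under transposition). Now apply Huang's theorem \cite[Theorem~1.1]{Huang} to $E^t$: it provides a finite sequence of standard graph transformations
\[
E^t = H_0 \xrightarrow{m_1} H_1 \xrightarrow{m_2} \cdots \xrightarrow{m_n} H_n = E^t
\]
such that the composition of the induced isomorphisms $\varphi_{m_n}\circ\cdots\circ\varphi_{m_1}$ on $\mathrm{coker}(I - A_{E^t})$ equals $\beta$. (This is the nontrivial input; everything else is bookkeeping.)

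Now I would pass back through the transpose using Lemma \ref{standardlemma}. Writing $H_i = G_i^t$, the lemma supplies standard transformations $m'_i : G_i \to G_{i+1}$ assembling into
\[
E = G_0 \xrightarrow{m'_1} G_1 \xrightarrow{m'_2} \cdots \xrightarrow{m'_n} G_n = E.
\]
Because $L(E)$ is purely infinite simple unital and $E$ has no sources, Lemmas \ref{keepcond} and \ref{graphconditions} guarantee that every intermediate $G_i$ inherits these properties, so at each stage the appropriate one of Propositions \ref{expansionprop}, \ref{insplittingprop}, or \ref{outsplittingprop} (together with the inverse equivalences $\Phi^{\mathrm{cont}}$, $\Phi^{\mathrm{inam}}$, $\Phi^{\mathrm{outam}}$) applies and produces a Morita equivalence $\Phi^{m'_i} : L(G_i)\text{-Mod} \to L(G_{i+1})\text{-Mod}$.

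Finally, define
\[
\Phi = \Phi^{m'_n} \circ \cdots \circ \Phi^{m'_1} : L(E)\text{-Mod} \to L(E)\text{-Mod}.
\]
By Proposition \ref{samerestrictionprop}, each $\Phi^{m'_i}_{\mathcal{V}}$ agrees with $\varphi_{m_i}$ under the identification $K_0(L(G_i)) \cong \mathrm{coker}(I - A_{G_i^t})$. Composing, $\Phi_{\mathcal{V}}$ agrees with $\varphi_{m_n}\circ\cdots\circ\varphi_{m_1} = \beta$, which in turn corresponds to $\alpha$. Thus $\Phi$ restricts to $\alpha$, completing the proof. The only real obstacle here is marshalling Huang's theorem for $E^t$; the remaining work is tracking identifications through the transpose operation, and this is precisely what Lemma \ref{standardlemma} and Proposition \ref{samerestrictionprop} were designed to handle.
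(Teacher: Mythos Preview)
Your proposal is correct and follows essentially the same route as the paper: reduce to the sourceless case via Corollary~\ref{sourceelimtogetMoritaequiv}, apply Huang's theorem to $E^t$ to realize $\alpha$ by a chain of standard transformations, transpose that chain via Lemma~\ref{standardlemma}, and then compose the associated Morita equivalences, invoking Proposition~\ref{samerestrictionprop} to match restrictions. If anything, you have been slightly more explicit than the paper in justifying that the intermediate graphs $G_i$ satisfy the hypotheses of Propositions~\ref{expansionprop}, \ref{insplittingprop}, and \ref{outsplittingprop}.
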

\begin{proof}
If $E$ contains sources, then Corollary \ref{sourceelimtogetMoritaequiv} guarantees the existence of a Morita equivalence
$\Phi^{{\rm ELIM}}: L(E'){\rm -Mod} \rightarrow L(E){\rm -Mod}$, where $E'$ has no sources.  If $\Psi: L(E'){\rm -Mod} \rightarrow L(E'){\rm -Mod}$ is a Morita equivalence which restricts to the automorphism $(\Phi^{{\rm ELIM}}_{\mathcal{V}})^{-1} \circ \alpha \circ \Phi^{{\rm ELIM}}_{\mathcal{V}}$ of $K_0(L(E'))$, then $\Phi^{{\rm ELIM}} \circ \Psi \circ (\Phi^{{\rm ELIM}})^{-1}$ is a Morita equivalence from $L(E){\rm -Mod}$ to $L(E){\rm -Mod}$ which restricts to $\alpha$.   Therefore, it suffices to consider graphs $E$ with no sources.

If $L(E)$ is purely infinite simple, and $E$ has no sources, then $E$ is essential, irreducible, and
non-trivial, and hence so is $E^t$. Since $K_0(L(E))$ is identified with $coker(I - A_E^t)$, we may view $\alpha$ as an automorphism of $coker(I-A_E^t)=coker(I-A_{E^t})$.  Therefore, by \cite[Theorem 1.1]{Huang} (details in \cite[Theorem 2.15]{Huang2}), there exists
a flow equivalence ${\mathcal F}$ from $E^t$ to itself which induces $\alpha$.   Such a flow equivalence can be written as a finite sequence
$$E^t = H_0 \rightarrow^{m_1} H_1 \rightarrow^{m_2} H_2 ... \rightarrow^{m_n} H_n = E^t$$
of standard graph transformations which starts and ends with $E^t$.  But this then yields  a corresponding finite sequence of standard graph transformations
$$E = G_0 \rightarrow^{m'_1} G_1 \rightarrow^{m'_2} G_2 ... \rightarrow^{m'_n} G_n = E$$
which starts and ends with $E$, as described in Lemma \ref{standardlemma}.
This sequence of standard graph transformations in turn yields a sequence of Morita equivalences (using the results of Section \ref{Morita Equiv}) which starts and ends at $L(E){\rm -Mod}$.   But by Proposition \ref{samerestrictionprop},    at each stage of the sequence the restriction of the Morita equivalence to the appropriate $K_0$ group agrees with the induced map coming from the standard graph transformation.   If we denote by $\Phi:L(E){\rm -Mod} \rightarrow L(E){\rm -Mod}$ the composition of these Morita equivalences, then $\Phi$ restricts to the same automorphism of $K_0(L(E))$ as does ${\mathcal F}$, namely, the prescribed automorphism $\alpha.$
\end{proof}

Here now is the second main result of this article.

\begin{theor}\label{Th:morita_iso}
Let $E,G$ be finite graphs such that $L(E),L(G)$ are purely
infinite simple unital Leavitt path algebras, and such that $L(E)$ is Morita equivalent to $L(G)$. If
$$\mathcal{F}_{[1]}(L(E)) \equiv \mathcal{F}_{[1]}(L(G)),$$
(i.e., if $K_0(L(E)) \cong K_0(L(G))$ via an isomorphism which sends
$[1_{L(E)}]$ to $[1_{L(G)}]$), then there is a ring isomorphism $$L(E)\cong L(G).$$
\end{theor}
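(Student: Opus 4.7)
The plan is to combine the hypothesis $\mathcal F_{[1]}(L(E))\equiv\mathcal F_{[1]}(L(G))$ with Proposition~\ref{huangwithmorita} to upgrade the given Morita equivalence into one whose restriction to the $\mathcal V^{\ast}$-semigroups matches the prescribed $K_0$-isomorphism, and then to read off an actual ring isomorphism via the Morita dictionary for purely infinite simple unital rings.

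First I would fix some Morita equivalence $\Psi_0\colon L(E)\text{-Mod}\to L(G)\text{-Mod}$, which exists by hypothesis. This yields an isomorphism $(\Psi_0)_{\mathcal V}\colon K_0(L(E))\to K_0(L(G))$ (using that for purely infinite simple unital rings $\mathcal V^{\ast}=K_0$), but $(\Psi_0)_{\mathcal V}$ need not carry $[1_{L(E)}]$ to $[1_{L(G)}]$. By the unitary Franks pair hypothesis there is some isomorphism $\varphi\colon K_0(L(E))\to K_0(L(G))$ that does. The composite $\alpha=(\Psi_0)_{\mathcal V}^{-1}\circ\varphi$ is then an automorphism of $K_0(L(E))$, and Proposition~\ref{huangwithmorita} produces a Morita self-equivalence $\Phi\colon L(E)\text{-Mod}\to L(E)\text{-Mod}$ with $\Phi_{\mathcal V}=\alpha$. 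Setting $\Psi=\Psi_0\circ\Phi$, we obtain a Morita equivalence $L(E)\text{-Mod}\to L(G)\text{-Mod}$ with $\Psi_{\mathcal V}=(\Psi_0)_{\mathcal V}\circ\alpha=\varphi$; in particular $\Psi_{\mathcal V}([1_{L(E)}])=[1_{L(G)}]$.

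Next I would extract the ring isomorphism from $\Psi$. The equivalence $\Psi$ sends $L(E)$, viewed as a left module over itself, to a progenerator $P=\Psi(L(E))$ of $L(G)\text{-Mod}$, with $L(E)\cong\End_{L(G)}(P)^{\mathrm{op}}$ by the standard Morita dictionary. The class $[P]$ in $\mathcal V^{\ast}(L(G))=K_0(L(G))$ equals $\Psi_{\mathcal V}([L(E)])=[1_{L(G)}]=[L(G)]$. Because $L(G)$ is purely infinite simple, equality in $K_0$ forces isomorphism in $\mathcal V^{\ast}$ (as recalled in the preliminaries from \cite[Propositions 2.1, 2.2]{AGP}), so $P\cong L(G)$ as left $L(G)$-modules. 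Then
\[
L(E)\;\cong\;\End_{L(G)}(P)^{\mathrm{op}}\;\cong\;\End_{L(G)}(L(G))^{\mathrm{op}}\;\cong\;L(G),
\]
which is the desired isomorphism.

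The main obstacle in this argument is the adjustment of $\Psi_0$ so that its induced map on $K_0$ is the \emph{prescribed} isomorphism $\varphi$; without such control, we would only know that $P$ is \emph{some} progenerator and there would be no reason for $[P]=[L(G)]$. Everything else is formal: the conversion from equality of classes to isomorphism of modules uses pure infinite simplicity, and the passage from $P\cong L(G)$ to $L(E)\cong L(G)$ is the standard endomorphism-ring computation. Thus the weight of the theorem is carried by Proposition~\ref{huangwithmorita} (hence ultimately by Huang's theorem), which is precisely the flow-equivalence analogue of the rigidity we need on $K_0$.
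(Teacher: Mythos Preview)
Your proposal is correct and follows essentially the same approach as the paper: use Proposition~\ref{huangwithmorita} to adjust a given Morita equivalence so that its restriction to $K_0$ is the prescribed isomorphism $\varphi$, then invoke pure infinite simplicity to turn $[P]=[L(G)]$ into $P\cong L(G)$ and read off the ring isomorphism from endomorphism rings. The only cosmetic difference is that the paper applies Proposition~\ref{huangwithmorita} on the $L(G)$ side (using the automorphism $\varphi\circ\Gamma_{\mathcal V}^{-1}$ of $K_0(L(G))$) whereas you apply it on the $L(E)$ side (using $(\Psi_0)_{\mathcal V}^{-1}\circ\varphi$ on $K_0(L(E))$); since both algebras satisfy the hypotheses of that proposition, either choice works.
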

\begin{proof}
Suppose that $\varphi :K_0(L(E))\rightarrow K_0(L(G))$ is an
isomorphism with $\varphi ([1_{L(E)}])=[1_{L(G)}]$.  Since
$L(E)$ and $L(G)$ are Morita equivalent by hypothesis, there exists
a Morita equivalence $$\Gamma : L(E){\rm -Mod}\rightarrow L(G){\rm -Mod}.$$
Thus there is an isomorphism $\Gamma_{\mathcal{V}}:
K_0(L(E))\rightarrow K_0(L(G))$.

Now consider the group automorphism $$\varphi\circ \Gamma_{\mathcal{V}}^{-1}:
K_0(L(G))\rightarrow K_0(L(G)).$$
By Proposition \ref{huangwithmorita}, there
exists a Morita equivalence $\Psi : L(G){\rm -Mod}\rightarrow L(G){\rm -Mod}$ such that
$$\Psi_{\mathcal{V}}=\varphi\circ \Gamma_{\mathcal{V}}^{-1}.$$
Thus, we get a Morita equivalence
$$H:=\Psi\circ \Gamma : L(E){\rm -Mod}\rightarrow L(G){\rm -Mod}$$
with
$$H_{\mathcal{V}}=(\Psi\circ \Gamma)_{\mathcal{V}}=\Psi_{\mathcal{V}}\circ \Gamma_{\mathcal{V}}=
\varphi\circ \Gamma_{\mathcal{V}}^{-1}\circ \Gamma_{\mathcal{V}}=\varphi .$$
 In particular,
$H_{\mathcal{V}}([1_{L(E)}])=\varphi ([1_{L(E)}])=[1_{L(G)}]$. As noted in
the Introduction, since
$L(E)$ and $L(G)$ are purely infinite simple rings, \cite[Corollary
2.2]{AGP} implies that $[1_{L(E)}]\in K_0(L(E))$ consists of the
finitely generated projective left $L(E)$-modules isomorphic (as
left $L(E)$-modules) to the progenerator ${}_{L(E)}L(E)$, and
analogously $[1_{L(G)}]\in K_0(L(G))$ consists of the finitely
generated projective left $L(G)$-modules isomorphic (as left
$L(G)$-modules) to the progenerator ${}_{L(G)}L(G)$. Thus the
equation $H_{\mathcal{V}}([1_{L(E)}])=[1_{L(G)}]$ yields that
$H({}_{L(E)}L(E))\cong {}_{L(G)}L(G)$. Since Morita equivalences
preserve endomorphism rings, we get ring isomorphisms
$$L(E)\cong \mbox{End}_{L(E)}(L(E))\cong
\mbox{End}_{L(G)}(H(L(E)))
\cong \mbox{End}_{L(G)}(L(G))\cong L(G) ,$$ and the theorem is established.
\end{proof}

An easy corollary now gives sufficient, readily computable, and remarkably weak
conditions under which two unital purely infinite simple Leavitt path
algebras are known to be isomorphic.  We combine $\mathcal{F}_{det}$ and
$\mathcal{F}_{[1]}$ to obtain these conditions.

\begin{defi}\label{iso-invariant}
{\rm Let $E$ be a finite graph. We define the {\it Franks
triple}  to be the ordered triple
$${\mathcal F}_3(E)=( \ K_0(L(E)),[1_{L(E)}] , \mbox{det}(I_n-A_E^t) \ ),$$ consisting of the abelian group
$K_0(L(E))$, the element $[1_{L(E)}]$  which represents the
order unit of
$K_0(L(E))$ containing $1_{L(E)}$, and the integer $\det(I_n-A_E^t)$ (where $n= | E^0 |$).
For finite graphs $E,G$ we write
$${\mathcal F}_{3}(E) \equiv {\mathcal F}_{3}(G)$$
in case there exists an abelian group isomorphism $\varphi:
K_0(L(E))\rightarrow K_0(L(G))$ for which $\varphi([1_{L(E)}]) =
[1_{L(G)}]$, and  $\det(I_n-A_E^t) = \det(I_m-A_G^t)$.
Clearly $\equiv$ yields an equivalence relation on the set of finite graphs. }
\end{defi}

When $n=|E^0|$ is clear from context we will often denote the $n\times n$ identity
matrix $I_n$ simply by $I$.  We now have the sufficiency result we pursued.

\begin{corol}\label{Th:Almost BigFish}
Let $E,G$ be finite graphs such that $L(E)$ and $L(G)$ are purely
infinite
simple Leavitt path algebras. If
$$\mathcal{F}_3(L(E))\equiv \mathcal{F}_3(L(G)),$$
(i.e., if $K_0(L(E))\cong K_0(L(G))$ via an isomorphism which sends
$[1_{L(E)}]$ to $[1_{L(G)}]$, and
$\mbox{det}(I-A_E^t)=\mbox{det}(I-A_G^t)$), then there is a ring isomorphism
$$L(E)\cong L(G).$$
\end{corol}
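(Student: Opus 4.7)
The plan is to combine the two main theorems of the preceding sections in a straightforward manner; indeed, the hypotheses have been set up precisely so that this corollary falls out with no additional work. Concretely, the hypothesis $\mathcal{F}_3(L(E))\equiv \mathcal{F}_3(L(G))$ packages together exactly the data needed to feed Theorem \ref{Franksinvariantsufficient} first, and then Theorem \ref{Th:morita_iso}.

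First, I would extract from $\mathcal{F}_3(L(E))\equiv \mathcal{F}_3(L(G))$ an isomorphism $\varphi: K_0(L(E)) \to K_0(L(G))$ with $\varphi([1_{L(E)}])=[1_{L(G)}]$, together with the equality $\det(I-A_E^t)=\det(I-A_G^t)$. Ignoring the order unit for the moment, the existence of the (abelian group) isomorphism $\varphi$ and the determinant equality give precisely $\mathcal{F}_{det}(E) \equiv \mathcal{F}_{det}(G)$. Since both $L(E)$ and $L(G)$ are purely infinite simple unital, Theorem \ref{Franksinvariantsufficient} applies and yields $L(E) \sim_M L(G)$.

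Next, with Morita equivalence in hand, I would invoke Theorem \ref{Th:morita_iso}. The same isomorphism $\varphi$ witnesses $\mathcal{F}_{[1]}(L(E)) \equiv \mathcal{F}_{[1]}(L(G))$, because by hypothesis it sends the class of the identity to the class of the identity. Theorem \ref{Th:morita_iso} then upgrades the Morita equivalence to a ring isomorphism $L(E)\cong L(G)$, which is the desired conclusion.

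There is no real obstacle here: the corollary is essentially a bookkeeping exercise chaining Theorem \ref{Franksinvariantsufficient} (which converts the determinant and $K_0$ data into Morita equivalence) with Theorem \ref{Th:morita_iso} (which converts Morita equivalence together with a pointed $K_0$-isomorphism into a ring isomorphism). The only mild point worth noting is that a single isomorphism $\varphi$ serves both roles simultaneously, so one need not worry about reconciling two potentially different $K_0$-isomorphisms along the way.
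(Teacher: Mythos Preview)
Your proposal is correct and matches the paper's proof essentially verbatim: extract $\mathcal{F}_{det}(E)\equiv\mathcal{F}_{det}(G)$ from the hypothesis, apply Theorem \ref{Franksinvariantsufficient} to get Morita equivalence, then use $\mathcal{F}_{[1]}(E)\equiv\mathcal{F}_{[1]}(G)$ together with Theorem \ref{Th:morita_iso} to upgrade to an isomorphism.
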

\begin{proof}
Since $\mathcal{F}_3(L(E))\equiv \mathcal{F}_3(L(G))$, we have in particular
that $\mathcal{F}_{det}(L(E))\equiv \mathcal{F}_{det}(L(G))$, so that
$L(E)$ and $L(G)$ are Morita equivalent by Theorem
\ref{Franksinvariantsufficient}.  At the same time, we have
$\mathcal{F}_{[1]}(L(E))\equiv \mathcal{F}_{[1]}(L(G))$, which together
with Theorem \ref{Th:morita_iso} gives the isomorphism we seek.
\end{proof}

\begin{exem}
{\rm

Let $E$ and $F$ be the graphs

\medskip

 \[
E = R_4 = \qquad \xymatrix{\bullet \ar@(ul,ur) \ar@(ur,dr) \ar@(dr,dl) \ar@(dl,ul)} \qquad \qquad
F = \qquad \xymatrix{\bullet \ar@(dl,ul) \ar@/^/[r] & \bullet \ar@/^1pc/[l] \ar@/^2pc/[l]\ar@/^3pc/[l] \ar@(u,r) \ar@(r,d)}
\]

\medskip

Then
$$A_E = (4) \ \ \mbox{and} \ \ A_F = \left(
\begin{array}{cc}
1  & 1   \\
 3 &  2  \\
\end{array}
\right),$$
so that
$$I - A^t_E = (-3) \ \ \mbox{and} \ \ I - A^t_F = \left(
\begin{array}{cc}
0  & -3   \\
 -1 & -1  \\
\end{array}
\right).$$
It is well known (and easy to compute) that $K_0(L(E))\cong \Z_3$, with $[1_{L(E)}] = 1$ in $\Z_3$.   Similarly, it is not hard to show that $K_0(L(F)) \cong coker(I - A_F^t) \cong \Z_3$ as well, and that $[1_{L(F)}]=1$ in $\Z_3$.   Clearly $det(I-A_E^t)=-3=det(I-A_F^t)$.   Since both graphs yield purely infinite simple unital Leavitt path algebras, we conclude by Corollary \ref{Th:Almost BigFish} that the Leavitt path algebras $L(E)$ and $L(F)$ are isomorphic.

}
\end{exem}

It is worth remarking that the proof of Proposition \ref{huangwithmorita}, and therefore the proof of Theorem \ref{Th:morita_iso}, hinges on results of Huang (\cite{Huang2} and  \cite{Huang}).  Huang's results  are {\it constructive}; specifically, the flow equivalence from $E^t$ to itself which induces $\alpha$ (as in the proof of Proposition \ref{huangwithmorita}) is explicitly described.  In \cite{AS} we present in detail an algorithmic description of the resulting isomorphism $L(E)\cong L(G)$ of Theorem \ref{Th:morita_iso}.

\medskip

Corollary \ref{Th:Almost BigFish} establishes that equivalence of the
Franks triple is a sufficient condition to conclude isomorphism of
the corresponding purely infinite simple Leavitt path algebras over
finite graphs.    For the remainder of this section we consider
whether or not the unitary Franks pair (i.e., the pair
$(K_0(L(E)),[1_{L(E)}])$ {\it without} the ${\rm det}(I-A_E^t)$
information) precisely classifies these algebras. It is known that
the converse is true: namely,   that an isomorphism $L(E)\cong L(G)$
implies the equivalence  of the unitary Franks pairs ${\mathcal
F}_{[1]}(E) \equiv {\mathcal F}_{[1]}(G)$ (see e.g. \cite[Theorem
5.11]{AbAnhLouP}).

It turns out that equivalence of the unitary Franks invariants {\it almost}
guarantees equivalence of the corresponding Franks triples; the only possible
difference can be in the sign of the determinant.  In particular, we can
recast Corollary \ref{Th:Almost BigFish} as follows.

\begin{corol}\label{determinantabsval}
If $E$ and $G$ are finite graphs for which the Leavitt path algebras $L(E)$ and $L(G)$
are purely infinite simple, for which ${\mathcal F}_{[1]}(E) \equiv {\mathcal F}_{[1]}(G)$,
and for which the integers ${\rm det}(I-A_E^t)$ and ${\rm det}(I-A_G^t)$ have the same
sign, then there is a ring isomorphism $L(E)\cong L(G)$.
\end{corol}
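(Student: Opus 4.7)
The plan is to reduce this statement to Corollary \ref{Th:Almost BigFish} by upgrading the hypotheses into a full Franks triple equivalence $\mathcal{F}_3(E) \equiv \mathcal{F}_3(G)$; once that is in hand the conclusion $L(E) \cong L(G)$ is automatic.

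First I would translate $\mathcal{F}_{[1]}(E) \equiv \mathcal{F}_{[1]}(G)$ into the matrix setting. As recorded in the excerpt, there are natural isomorphisms $K_0(L(E)) \cong {\rm coker}(I - A_E^t)$ and $K_0(L(G)) \cong {\rm coker}(I - A_G^t)$, so the unitary Franks pair hypothesis supplies, in particular, an isomorphism of abelian groups ${\rm coker}(I - A_E^t) \cong {\rm coker}(I - A_G^t)$.

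Next I would use Smith normal form to read the absolute value of the determinant off of the cokernel. For any square integer matrix $M$ of size $k$ with Smith normal form ${\rm diag}(d_1,\ldots,d_r,0,\ldots,0)$ (so $d_i>0$ and $r \le k$), one has
$${\rm coker}(M) \;\cong\; \bigoplus_{i=1}^{r} \Z/d_i\Z \;\oplus\; \Z^{k-r},$$
while $|\det(M)| = d_1\cdots d_r$ when $r=k$ and $|\det(M)|=0$ otherwise. Because the isomorphism type of a finitely generated abelian group determines both its free rank and its list of invariant factors, the isomorphism of cokernels above forces $|\det(I-A_E^t)| = |\det(I-A_G^t)|$. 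The hypothesis that these two determinants share a sign then promotes this equality of absolute values to the honest equality $\det(I-A_E^t) = \det(I-A_G^t)$.

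Combined with the preserved order unit $[1_{L(E)}] \mapsto [1_{L(G)}]$ already furnished by $\mathcal{F}_{[1]}(E) \equiv \mathcal{F}_{[1]}(G)$, we thus obtain $\mathcal{F}_3(E) \equiv \mathcal{F}_3(G)$, and Corollary \ref{Th:Almost BigFish} concludes that $L(E)\cong L(G)$. The argument has no genuine obstacle; the only mild delicacy is the degenerate case in which both cokernels have positive free rank, so that both determinants vanish and the ``same sign'' hypothesis must be read as the automatically satisfied statement $0=0$.
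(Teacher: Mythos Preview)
Your proof is correct and follows essentially the same route as the paper: both arguments pass from the cokernel isomorphism to $|\det(I-A_E^t)| = |\det(I-A_G^t)|$ via Smith normal form, use the sign hypothesis to get equality of determinants, and then invoke Corollary \ref{Th:Almost BigFish}. Your version is in fact slightly more careful, since the two matrices may have different sizes and hence do not literally share a Smith normal form; what one really uses is that the invariant factors greater than $1$ and the free rank are determined by the cokernel, which is exactly what you wrote.
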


\begin{proof}
Since ${\mathcal F}_{[1]}(E) \equiv {\mathcal F}_{[1]}(G)$, we have in particular that
$coker(I - A_E^t) \cong coker(I-A_G^t)$, whence the Smith normal forms of these two
matrices are the same.  But the Smith normal form of a matrix is achieved by a process
which involves multiplication by various matrices, each having determinant $1$ or $-1$.
In particular, this yields $|{\rm det}(I-A_E^t)| = |{\rm det}(I-A_G^t)|$.   So
${\rm det}(I-A_E^t)$ and ${\rm det}(I-A_G^t)$ having the same sign implies equality of
these two integers, whence the result follows from Corollary \ref{Th:Almost
BigFish}.
\end{proof}

We note that there are  classes of graphs for which  equivalence of the unitary Franks
pair automatically implies  equivalence of the corresponding Franks triple, which then in
turn implies  isomorphism of the corresponding Leavitt path algebras by Corollary
\ref{Th:Almost BigFish}.   For instance, isomorphisms between various sized matrix rings over
the Leavitt algebra $L_n$ (see the Introduction) can be recast as isomorphisms between
Leavitt path algebras over appropriate graphs  (see \cite[Section 5]{AbAnhLouP}).  In this
context, one can show that graphs having equivalent unitary Franks pair indeed have identical
(negative) ${\rm det}(I-A^t)$, so that \cite[Theorem 5.9]{AbAnhLouP} in fact follows from
Corollary \ref{Th:Almost BigFish}.

Similarly, isomorphisms between purely infinite simple Leavitt path algebras $L(E)$ and
$L(G)$, for which neither $E$ nor $G$ have parallel edges, and for which both
$|E^0| \leq 3$ and $|G^0| \leq 3$, are established in \cite[Section 4]{AbAnhLouP}.   In
this context as well, one can show that  graphs having equivalent unitary Franks pair
indeed have identical (negative) ${\rm det}(I-A^t)$, so that \cite[Propositions 4.1 and
4.2]{AbAnhLouP}  follow from Corollary \ref{Th:Almost BigFish} as well.

The previous two paragraphs notwithstanding, the cited isomorphism results from \cite{AbAnhLouP}
are more than merely special cases of Corollary \ref{Th:Almost BigFish}, since the isomorphisms
of \cite{AbAnhLouP} are in fact explicitly constructed.

An immediate, interesting consequence of Corollary \ref{Th:Almost
BigFish} is the following result along these same lines.

\begin{corol}\label{Th:TorsionfreeBigFish}
Let $E,G$ be finite graphs such that $L(E),L(G)$ are purely infinite
simple Leavitt path algebras with infinite Grothendieck groups. If
${\mathcal F}_{[1]}(E) \equiv {\mathcal F}_{[1]}(G)$, then
$L(E)\cong L(G)$. In other words, in this situation, equivalence of
the unitary Franks pairs is sufficient to yield isomorphism of the
Leavitt path algebras.
\end{corol}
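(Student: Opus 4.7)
The plan is to reduce to Corollary \ref{Th:Almost BigFish} by promoting the assumed equivalence of unitary Franks pairs to an equivalence of the full Franks triples. In other words, under the hypothesis that both Grothendieck groups are infinite, I will show that the determinant data in ${\mathcal F}_3$ is forced automatically, so no separate hypothesis on $\det(I-A_E^t)$ and $\det(I-A_G^t)$ is needed beyond what is already given.

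The key observation is the standard linear-algebra fact that, for any $n\times n$ integer matrix $M$, the finitely generated abelian group $\mathrm{coker}(M)=\Z^n/M\Z^n$ is finite if and only if $M$ has full rank over $\Q$, i.e., if and only if $\det(M)\neq 0$. Applied with $M=I_n-A_E^t$, and using the identification $K_0(L(E))\cong \mathrm{coker}(I_n-A_E^t)$ recalled just before Corollary \ref{flowsequenceforLPAs}, the hypothesis that $K_0(L(E))$ is infinite forces $\det(I_n-A_E^t)=0$. The same reasoning applied to $G$ gives $\det(I_m-A_G^t)=0$, and so $\det(I_n-A_E^t)=\det(I_m-A_G^t)$.

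Combining this equality of determinants with the assumed equivalence ${\mathcal F}_{[1]}(E)\equiv {\mathcal F}_{[1]}(G)$ (which already provides the $K_0$-isomorphism sending $[1_{L(E)}]$ to $[1_{L(G)}]$) yields ${\mathcal F}_3(E)\equiv {\mathcal F}_3(G)$. An application of Corollary \ref{Th:Almost BigFish} then gives $L(E)\cong L(G)$. There is no genuine obstacle here; the content of the corollary is precisely the observation that, once the Grothendieck group is infinite, the determinant entry of the Franks triple is redundant, since both determinants are simultaneously forced to be zero, and hence the unitary Franks pair alone carries all the classifying information supplied by Corollary \ref{Th:Almost BigFish}.
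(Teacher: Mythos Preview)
Your proposal is correct and follows essentially the same approach as the paper: observe that an infinite Grothendieck group forces $\det(I-A_E^t)=0=\det(I-A_G^t)$, and then invoke Corollary~\ref{Th:Almost BigFish}. The only difference is that you spell out the standard linear-algebra reason for the determinant vanishing, whereas the paper simply asserts it.
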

\begin{proof}
The condition that $L(E)$ and $L(G)$ have infinite Grothendieck
groups implies that ${\rm det}(I-A_E^t) = 0 = {\rm det}(I-A_G^t)$,
and Corollary \ref{Th:Almost BigFish} then applies.
\end{proof}

So, in the case of infinite $K_0$-groups, the unitary Franks pair
$(K_0(L(E)), [1_{L(E)}])$ is an invariant for classifying the purely
infinite simple unital Leavitt path algebras  up to isomorphism.

We note that somewhat stronger conclusions may be drawn in Theorem \ref{Th:morita_iso} and the subsequent three Corollaries, to wit, that there exist $K$-algebra isomorphisms between the corresponding Leavitt path algebras.  The tools required to establish the existence of such $K$-algebra isomorphisms are more extensive than the tools utilized here.  This approach follows an approach similar to one developed by Cuntz, which is described in \cite[Theorem 6.5]{Rordam}.

For the remainder of this section we investigate whether or not the result
of Corollary \ref{Th:TorsionfreeBigFish} can be generalized to all purely
infinite simple unital Leavitt path algebras.  Rephrased, we seek to
show either

\medskip

(1) that there exist non-isomorphic purely
infinite simple Leavitt path algebras  $L(E),L(F)$ over finite
graphs $E,F$ such that ${\mathcal F}_{[1]}(E)\equiv {\mathcal F}_{[1]}(F)$, for which the signs of
$\mbox{det}(I-A_E^t)$ and $\mbox{det}(I-A_F^t)$ are unequal,  or

\smallskip

(2)  that the sign of $\mbox{det}(I-A_E^t)$ plays no role in guaranteeing the
existence of an isomorphism between the purely infinite simple unital Leavitt path algebras
$L(E)$ and $L(F)$, for which ${\mathcal F}_{[1]}(E)\equiv {\mathcal F}_{[1]}(F)$.

\medskip

A key observation related to the analysis of the Classification
Question developed by the authors in the present paper and
\cite{AbAnhLouP} is that the graph operations we have already considered
cannot help us in this final step, because all of these graph
operations preserve flow equivalence on subshifts of finite type,
and thus preserve the sign of ${\rm det}(I-A_E^t)$. So it is clear
that to attain the final goal of classifying these kinds of Leavitt
path algebras using the unitary Franks pair $(K_0(L(E)),
[1_{L(E)}])$ as an invariant requires a completely new set of ideas
and strategies.

In the context of Cuntz-Krieger $C^*$-algebras, the irrelevance of the sign of the
determinant in the analogous Classification Question was shown by R{\o}rdam
\cite{Rordam}; and in the case of graph $C^*$-algebras, this irrelevance  is a
direct consequence of the Kirchberg-Phillips Classification Theorem
\cite{Kirch, Phil} and the computation of the K-theoretic invariant
for such a class of algebras (see e.g. \cite{R}). In this direction,
a useful tool is Cuntz's Theorem (presented by R{\o}rdam
\cite[Theorem 7.2]{Rordam}), whose adaptation to our context gives
the possibility of reducing the above situation to a single pair of
algebras. We describe the situation, following Cuntz's argument.

\begin{defi}
{\rm For any finite graph $E$ having vertices $v_1,\ldots,v_n$, such that
$L(E)$ is a purely infinite simple algebra and $v_n$ belongs to a cycle,
let $E_{-}$ be the graph whose incidence matrix and pictorial
representation is
$$
A_{E_{-}} \ = \ \left(
\begin{array}{cccccc}
  &   &  &  & 0 & 0 \\
  & A_E  &  &  & \vdots & \vdots \\
  &   &  &  & 0 & 0 \\
  &   &  &  & 1 & 0 \\
 0 & \cdots  & 0 & 1 & 1 & 1 \\
 0 & \cdots  & 0 & 0 & 1 & 1 \\
\end{array}
\right)
\qquad
E_{-} \ = \ \xy
(0,0)*[F**:<8pt>]++{\xy
    (0,10)*{};
    (0,-10)*{};
    (3,5)*{E};
    (20,0)*{}="vv";
    (10,7)*{}="vv1";
    (10,0)*{}="vv2";
    (10,-7)*{}="vv3";
    {\ar@{.} "vv1" ;"vv" };
    {\ar@{.} "vv2" ;"vv" };
    {\ar@{.} "vv3" ;"vv" };
\endxy };
(10,0)*{\bullet}="v";
(13,0)*{\scriptstyle v_n};
(20,0)*{\bullet}="v1";
(30,0)*{\bullet}="v2";
{\ar@/^/     "v" ;"v1" };
{\ar@/^/     "v1";"v"  };
{\ar@/^/     "v1";"v2" };
{\ar@/^/     "v2";"v1" };
{\ar@(ul,ur) "v1";"v1" };
{\ar@(ur,dr) "v2";"v2" };
\endxy.
$$

Specifically, if $E^0=\{v_1, \dots ,v_n\}$ where $v_n$ belongs to a cycle,
then the new graph $E_{-}$ has  $E_{-}^0=\{v_1, \dots ,v_n, v_{n+1}, v_{n+2}\}$,
while $E_{-}^1$ is the union of $E^1$ with six new edges: one  from
$v_n$ to $v_{n+1}$; one from $v_{n+1}$ to each of $v_n$,  $v_{n+1}$,
and $v_{n+2}$; and one from $v_{n+2}$ to each of $v_{n+1}$ and
$v_{n+2}$.
}
\end{defi}

 It is straightforward to show (using the Purely Infinite
Simplicity Theorem) that $L(E)$ is purely infinite simple unital if and only
if $L(E_{-})$ is purely infinite simple unital.  Furthermore, we have

\begin{prop}\label{differentsigns}
Let $E$ be a finite graph for which $L(E)$ is purely infinite
simple, and let $E_{-}$ be the graph defined above. Then
$$K_0(L(E_{-}))  \cong K_0(L(E)),  \ \mbox{ and }\
\mbox{det}(I-A^t_{E_{-}})= -\mbox{det}(I-A^t_{E}).$$
\end{prop}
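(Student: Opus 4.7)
The plan is to reduce everything to an explicit computation with the $(n+2)\times(n+2)$ matrix $I - A_{E_{-}}^{t}$, exploiting the identification $K_0(L(E)) \cong \mathrm{coker}(I_n - A_E^t)$ (and its analogue for $E_{-}$) recorded earlier in this section. First I would write $I_{n+2} - A_{E_{-}}^{t}$ explicitly. Using the given description of $A_{E_{-}}$, the upper-left $n\times n$ block is $I_n - A_E^t$; outside this block the only nonzero entries live in rows and columns $n, n+1, n+2$, and the $3\times 3$ sub-block indexed by these three rows and columns is
$$\begin{pmatrix} 1-(A_E)_{n,n} & -1 & 0 \\ -1 & 0 & -1 \\ 0 & -1 & 0 \end{pmatrix}.$$

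For the determinant I would expand $\det(I_{n+2} - A_{E_{-}}^{t})$ along row $n+2$, whose sole nonzero entry is $-1$ in column $n+1$. This yields a signed $(n+1)\times(n+1)$ minor, and that minor in turn has only one nonzero entry in its last column (the original column $n+2$), namely the $-1$ in the original row $n+1$. Expanding along that column once more, the two sign factors from the successive cofactor expansions combine with the two $-1$ entries to produce exactly $-\det(I_n - A_E^t)$, as required.

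For the $K_0$ statement I would present $\mathrm{coker}(I_{n+2} - A_{E_{-}}^{t})$ by generators $\bar x_1, \ldots, \bar x_{n+2}$ and the relations obtained by setting each column of $I_{n+2} - A_{E_{-}}^{t}$ to zero. The relation from column $n+2$ reads $-\bar x_{n+1} = 0$, and the relation from column $n+1$ reads $-\bar x_n - \bar x_{n+2} = 0$; hence $\bar x_{n+1}$ and $\bar x_{n+2}$ can be eliminated. After elimination, columns $1, \ldots, n-1$ of $I_{n+2} - A_{E_{-}}^{t}$ contribute precisely the first $n-1$ column relations of $I_n - A_E^t$, while column $n$ (which differs from the $n$-th column of $I_n - A_E^t$ only by an additional $-\bar x_{n+1}$ term, now zero) contributes the $n$-th. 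The resulting presentation coincides with that of $\mathrm{coker}(I_n - A_E^t)$, giving the desired isomorphism $K_0(L(E_{-})) \cong K_0(L(E))$.

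I do not foresee a substantive obstacle; the only bit requiring any care is the sign bookkeeping in the two-stage cofactor expansion for the determinant, together with the verification that the two small relations extracted from the last two columns really do leave the remaining $n$ relations in exactly the form of $I_n - A_E^t$.
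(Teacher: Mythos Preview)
Your proposal is correct and follows the same line as the paper. For the determinant the paper simply calls it ``an elementary computation,'' which is exactly the two-step cofactor expansion you describe; for $K_0$ the paper carries out the identical reduction (finding $[v_{n+1}]=0$ and $[v_{n+2}]=-[v_n]$, and observing that the remaining relations are precisely those of $K_0(L(E))$), the only cosmetic difference being that the paper phrases it via the monoid presentation of $V(L(E_{-}))$ from \cite{AMFP} and \cite{AGP} rather than directly via $\mathrm{coker}(I-A_{E_{-}}^t)$ as you do.
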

\begin{proof}
For proving $K_0(L(E_{-}))  \cong K_0(L(E))$, notice that, by
\cite[Theorem 3.5]{AMFP}, the monoid  $V(L(E_{-}))$ is generated
by $v_1, \dots , v_{n+2},$ with relations
$$v_i=\sum\limits
_{j=1}^{n}A_e(i,j)v_j \  \ \ (1\leq i\leq n-1),$$
 together with
the three relations
$$v_n= v_{n+1} + \sum\limits
_{j=1}^{n}A_e(i,j)v_j, \
v_{n+1}=v_n+v_{n+1}+v_{n+2}, \ \mbox{ and } v_{n+2}=v_{n+1}+v_{n+2}.$$
 Since
$L(E_{-})$ is purely infinite simple we get $K_0(L(E_{-}))\cong
V(L(E_{-}))^*$  by \cite[Corollary 2.2]{AGP}. Thus $K_0(L(E_{-}))$
is the group generated by $[v_1], \dots , [v_{n+2}]$, with
relations
$$[v_i]=\sum\limits _{j=1}^{n}A_e(i,j)[v_j] \ \ \  (1\leq
i\leq n-1),$$
together with the three relations
 $$[v_n]=[v_{n+1}] + \sum\limits
_{j=1}^{n}A_e(i,j)[v_j], \  [v_n]=-[v_{n+2}], \  \mbox{and}  \  [v_{n+1}]=[0].$$
In particular, since $[v_{n+1}]=[0]$  we in fact have that
$[v_i]=\sum\limits_{j=1}^{n}A_e(i,j)[v_j]$ for all $1\leq i \leq n$ (i.e., {\it including} $n$ as well). This yields that the relations  between generators of  $K_0(L(E))$ remain the same when viewed as elements of $K_0(L(E_{-}))$, so that the inclusion map    $K_0(L(E)) \mapsto K_0(L(E_{-}))$  is a group homomorphism.  The equations  $[v_{n+1}]=[0]$ and $[v_{n+2}]=-[v_n]$ show that this inclusion is actually a surjection,  so
that  $K_0(L(E_{-}))$ is isomorphic to $K_0(L(E))$.

(We note that the isomorphism from $K_0(L(E))$ to $K_0(L(E_{-}))$ given here does not necessarily take $[1_{L(E)}]$ to $[1_{L(E_{-})}]$, since in general we need not have  $[v_{n+1}] + [v_{n+2}]=[0]$ in $K_0(L(E_{-}))$.)

With respect to the determinants, the result is an elementary
computation.
\end{proof}

As a specific, important example, consider the Leavitt path algebras $L({\textbf{2}})$
and $L({\textbf{2}_{-}})$, where $\textbf{2}$ and $\textbf{2}_{-}$
are the graphs with incidence matrices
$$
A_{\textbf{2}}=\left(
\begin{array}{cc}
  1& 1   \\
  1& 1  \\
\end{array}
\right) \  \mbox{ and } \
A_{\textbf{2}_{-}}=\left(
\begin{array}{cccc}
 1 & 1 & 0 & 0  \\
 1 & 1 & 1 & 0  \\
 0 & 1 & 1 & 1  \\
  0& 0 & 1 & 1  \\
\end{array}
\right).
 $$
Pictorially, these graphs are given by
$$
\textbf{2} \  = \ \ \ \ \xymatrix{
  \bullet^{v_1} \ar@(dl,ul)[] \ar@/^/[r]
& \bullet^{v_2} \ar@(ur,dr)[] \ar@/^/[l] }
$$
\vspace{.2truecm}

and

$$
\textbf{2}_{-} \  =  \ \ \ \
\xymatrix{
  \bullet^{v_1} \ar@(dl,ul)[] \ar@/^/[r]
& \bullet^{v_2} \ar@(ul,ur)[] \ar@/^/[r] \ar@/^/[l]
& \bullet^{v_3} \ar@(ul,ur)[] \ar@/^/[r] \ar@/^/[l]
& \bullet^{v_4} \ar@(ur,dr)[] \ar@/^/[l]
}
$$

Notice that
$$(K_0(L({\textbf{2}})),[1_{L({\textbf{2}})}],
\mbox{det}(I-A_{\textbf{2}}^t))=(\{0\},0,-1), \ \ \mbox{while}$$
$$(K_0(L({\textbf{2}_{-}})),[1_{L({\textbf{2}_{-}})}],
\mbox{det}(I-A_{\textbf{2}_{-}}^t))=(\{0\},0,1).$$

Now consider the standard representations of $L({\textbf{2}})$ and
$L({\textbf{2}_{-}})$ in $R=\mbox{End}_K(V)$, where $V$ is a
$K$-vector space of countable dimension with basis $\{ v_i\}_{i\geq
1}$. (For a description of this process, see \cite[page 8]{R}.)  Let
$\textbf{u}\in R$ be the endomorphism defined by the rule
$\textbf{u}(v_i)=\delta_{1,i}v_1$ . Let $\mathcal{E}_2$ be the
subalgebra of $R$ generated by $L({\textbf{2}})$ and $\textbf{u}$,
and similarly let $\mathcal{E}_{2_{-}}$ be the subalgebra of $R$
generated by $L({\textbf{2}_{-}})$ and
$\textbf{u}$.\vspace{.2truecm}

\noindent \textbf{Hypothesis:} \emph{There exists a $K$-algebra
isomorphism $\tau: L(${\rm \textbf{2}}$)\rightarrow
L(${\rm \textbf{2}}$_{-})$ which extends to an isomorphism $T:
\mathcal{E}_2\rightarrow \mathcal{E}_{2_{-}}$ such that
$T(\textbf{u})=\textbf{u}$.}\vspace{.2truecm}

Using the argument presented in \cite[Theorem 7.2]{Rordam}, it is
long but straightforward to show that

\begin{theor}\label{Th:Cuntz reduction}
If the Hypothesis holds, then for any finite graph $E$ such that
$L(E)$ is a purely infinite simple Leavitt path algebra,  there is
a Morita equivalence $L(E)\sim_M L(E_{-})$.
\end{theor}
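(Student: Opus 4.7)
The strategy is to adapt Cuntz's absorbing argument, as presented in \cite[Theorem 7.2]{Rordam}, to the purely algebraic setting of Leavitt path algebras. The essential observation is that the passage from $E$ to $E_{-}$ is a local modification at $v_n$: the matrices $A_E$ and $A_{E_{-}}$ differ only in the block corresponding to $\{v_n, v_{n+1}, v_{n+2}\}$, and this block encodes precisely the difference between the graphs $\textbf{2}$ and $\textbf{2}_{-}$. The goal is to leverage the Hypothesis, which handles this local change at the level of extended algebras, into a global statement about $L(E)$ and $L(E_{-})$.

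The first step will be to construct faithful representations of $L(E)$ and $L(E_{-})$ on a common $K$-vector space $W$ of countable dimension with a distinguished subspace $V \subseteq W$, chosen so that: the idempotent $v_n \in L(E)$ and the idempotent $v_n + v_{n+1} + v_{n+2} \in L(E_{-})$ both act as the projection onto $V$; the restrictions to $V$ recover the standard representations of $L(\textbf{2})$ and $L(\textbf{2}_{-})$ respectively; and the actions on $W \setminus V$ agree on all generators coming from the unchanged part of the graphs. Letting $\textbf{u} \in \mbox{End}_K(W)$ denote the natural extension of the rank-one endomorphism of $V$, I would define $\mathcal{E}_E$ and $\mathcal{E}_{E_{-}}$ as the subalgebras of $\mbox{End}_K(W)$ generated by $L(E) \cup \{\textbf{u}\}$ and $L(E_{-}) \cup \{\textbf{u}\}$ respectively, noting that $\mathcal{E}_2 \subseteq \mathcal{E}_E$ and $\mathcal{E}_{2_{-}} \subseteq \mathcal{E}_{E_{-}}$.

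The second step is to promote the isomorphism $T: \mathcal{E}_2 \to \mathcal{E}_{2_{-}}$ supplied by the Hypothesis to an isomorphism $\widetilde T: \mathcal{E}_E \to \mathcal{E}_{E_{-}}$ satisfying $\widetilde T(\textbf{u}) = \textbf{u}$: on the local patch $\mathcal{E}_2$, set $\widetilde T = T$, while on the generators coming from the common (unchanged) part of the graphs, let $\widetilde T$ be the identity. The compatibility condition $T(\textbf{u}) = \textbf{u}$ allows these two prescriptions to be glued consistently along $\textbf{u}$. Restricting $\widetilde T$ to the corner $\textbf{u}\,\mathcal{E}_E\,\textbf{u}$ will yield an isomorphism of $K$-algebras with $\textbf{u}\,\mathcal{E}_{E_{-}}\,\textbf{u}$; a final Morita-theoretic argument, using purely infinite simplicity to guarantee the fullness of $\textbf{u}$ inside both $L(E) \subseteq \mathcal{E}_E$ and $L(E_{-}) \subseteq \mathcal{E}_{E_{-}}$, then upgrades this corner isomorphism to the desired Morita equivalence $L(E) \sim_M L(E_{-})$.

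The main obstacle will be the gluing step. In Rørdam's $C^*$-algebraic context this is handled via pullback diagrams of extensions together with the $\mathcal{O}_2$-absorption theorem under spatial tensor products, neither of which tool is available in the purely algebraic setting. Instead, one must verify directly, by tracking the CK1 and CK2 relations at $v_n$, $v_{n+1}$ and $v_{n+2}$, that the piecewise prescription defining $\widetilde T$ is well-defined on generators of $\mathcal{E}_E$ and respects all defining relations; and analogously for $\widetilde T^{-1}$. The hypothesis $T(\textbf{u}) = \textbf{u}$ is precisely the compatibility that makes the two pieces fit along their common boundary, and once this bookkeeping is carried out the remainder of the argument is essentially formal.
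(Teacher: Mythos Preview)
Your proposal is correct and takes essentially the same approach as the paper: the paper's entire proof consists of the single sentence ``Using the argument presented in \cite[Theorem 7.2]{Rordam}, it is long but straightforward to show that'' the result holds, and your outline is precisely a sketch of how that Cuntz--R{\o}rdam argument adapts to the algebraic setting. One minor correction: R{\o}rdam's Theorem 7.2 itself does not invoke $\mathcal{O}_2$-absorption or tensor products---it works directly with the extension-by-compacts structure and the representations---which is exactly why the paper can call the adaptation ``straightforward'' rather than requiring the workaround you anticipate in your final paragraph.
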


Therefore, as a consequence of Corollary \ref{determinantabsval} and
Proposition \ref{differentsigns} we would then have

\begin{theor}\label{Th:MoritaFish}
If the Hypothesis holds, then $K_0$ precisely classifies purely infinite
simple unital Leavitt path algebras up to Morita equivalence.
\end{theor}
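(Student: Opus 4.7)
The plan is to reduce the general case to an application of Theorem \ref{Franksinvariantsufficient}, using Theorem \ref{Th:Cuntz reduction} to absorb the potential sign discrepancy in the determinant. Let $E$ and $G$ be finite graphs for which $L(E)$ and $L(G)$ are purely infinite simple unital, and suppose $K_0(L(E))\cong K_0(L(G))$ as abelian groups. The direction that $L(E)\sim_M L(G)$ implies $K_0$-isomorphism is standard, so only the converse direction requires argument.

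First, via the identification $K_0(L(E))\cong \mathrm{coker}(I-A_E^t)$ (and the analogue for $G$), the Smith normal form argument recorded in the proof of Corollary \ref{determinantabsval} forces the equality $|\mbox{det}(I-A_E^t)|=|\mbox{det}(I-A_G^t)|$. If the two signed determinants actually coincide, then $\mathcal{F}_{det}(E)\equiv\mathcal{F}_{det}(G)$, so Theorem \ref{Franksinvariantsufficient} gives $L(E)\sim_M L(G)$ outright, with no appeal to the Hypothesis required in this branch.

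The interesting case is $\mbox{det}(I-A_E^t)=-\mbox{det}(I-A_G^t)$, and this is where the Hypothesis enters. I would pass to the auxiliary graph $E_{-}$ constructed just before Proposition \ref{differentsigns}: that proposition supplies both $K_0(L(E_{-}))\cong K_0(L(E))\cong K_0(L(G))$ and $\mbox{det}(I-A_{E_{-}}^t)=-\mbox{det}(I-A_E^t)=\mbox{det}(I-A_G^t)$, while the construction preserves pure infinite simplicity and unitality. Hence $\mathcal{F}_{det}(E_{-})\equiv\mathcal{F}_{det}(G)$, and a second invocation of Theorem \ref{Franksinvariantsufficient} yields $L(E_{-})\sim_M L(G)$. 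Under the Hypothesis, Theorem \ref{Th:Cuntz reduction} supplies the Morita equivalence $L(E)\sim_M L(E_{-})$, and composing the two equivalences gives $L(E)\sim_M L(G)$, as desired.

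The main obstacle is of course not in the deduction above but in the Hypothesis itself, which remains to be established independently; the point of the argument is precisely that the entire Morita-equivalence classification by $K_0$ for the purely infinite simple unital class reduces to the single concrete comparison between $L(\mathbf{2})$ and $L(\mathbf{2}_{-})$ encoded in the Hypothesis.
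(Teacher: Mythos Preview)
Your proof is correct and follows essentially the same route as the paper's own argument: split into the two sign cases via the Smith-normal-form observation, apply Theorem \ref{Franksinvariantsufficient} directly in the equal-sign case, and in the opposite-sign case pass through $E_{-}$ using Proposition \ref{differentsigns} and Theorem \ref{Th:Cuntz reduction} before invoking Theorem \ref{Franksinvariantsufficient}. Your explicit mention of the standard converse direction and of the preservation of pure infinite simplicity for $E_{-}$ makes the write-up slightly more complete than the paper's, but there is no substantive difference in strategy.
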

\begin{proof}
Let $E,G$ be finite graphs for which $L(E)$ and $L(G)$ are purely infinite
simple and $K_0(L(E)) \cong K_0(L(G))$.  By Corollary \ref{determinantabsval},
either $\det(I-A_E^t) = \det(I-A_G^t)$, or $\det(I-A_E^t) = -\det(I-A_G^t)$.
In the first case, we have $\mathcal{F}_{det}(L(E)) \equiv \mathcal{F}_{det}(L(G))$,
so Theorem \ref{Franksinvariantsufficient} gives Morita equivalence.  Otherwise,
we have $\mathcal{F}_{det}(L(E_{-})) \equiv \mathcal{F}_{det}(L(G))$, so by
Theorems \ref{Th:Cuntz reduction} and \ref{Franksinvariantsufficient}, we get
$$L(E) \sim_M L(E_{-}) \sim_M L(G),$$ and the theorem follows.
\end{proof}

Following the same strategy as before, we now push this Morita equivalence
result to yield isomorphisms by applying Theorem \ref{Th:morita_iso}.  In order to
do so, we will need another graph construction.

\begin{defi}
{\rm For any finite graph $E$ having vertices $v_1,\ldots,v_n$, such that
$L(E)$ is a purely infinite simple algebra and $v_n$ belongs to a cycle,
let $E_{1-}$ be the graph whose incidence matrix and pictorial
representation is
$$
A_{E_{1-}} \ = \ \left(
\begin{array}{ccccccc}
   &         &   &   & 0      & 0      & 0      \\
   & A_E     &   &   & \vdots & \vdots & \vdots \\
   &         &   &   & 0      & 0      & 0      \\
   &         &   &   & 1      & 0      & 0      \\
 0 & \cdots  & 0 & 1 & 1      & 1      & 0      \\
 0 & \cdots  & 0 & 0 & 1      & 1      & 0      \\
 0 & \cdots  & 0 & 1 & 0      & 0      & 0      \\
\end{array}
\right)
\qquad
E_{1-} \ = \ \xy
(0,0)*[F**:<8pt>]++{\xy
    (0,10)*{};
    (0,-10)*{};
    (3,5)*{E};
    (20,0)*{}="vv";
    (10,7)*{}="vv1";
    (10,0)*{}="vv2";
    (10,-7)*{}="vv3";
    {\ar@{.} "vv1" ;"vv" };
    {\ar@{.} "vv2" ;"vv" };
    {\ar@{.} "vv3" ;"vv" };
\endxy };
(10,0)*{\bullet}="v";
(13,0)*{\scriptstyle v_n};
(20,0)*{\bullet}="v1";
(30,0)*{\bullet}="v2";
(13,-8)*{\bullet}="v3";
{\ar@/^/     "v" ;"v1" };
{\ar@/^/     "v1";"v"  };
{\ar@/^/     "v1";"v2" };
{\ar@/^/     "v2";"v1" };
{\ar@(ul,ur) "v1";"v1" };
{\ar@(ur,dr) "v2";"v2" };
{\ar         "v3";"v"  };
\endxy.
$$
}
\end{defi}

Immediately, we note that $E_{-} = \left(E_{1-}\right)_{\backslash v_{n+3}}$.
It is again straightforward to show (using the Purely Infinite Simplicity
Theorem) that $L(E)$ is purely infinite simple if and only
if $L(E_{1-})$ is purely infinite simple.  Furthermore, we have

\begin{prop}\label{1differentsigns}
Let $E$ be a finite graph for which $L(E)$ is purely infinite
simple, and let $E_{1-}$ be the graph defined above. Then
$$\mathcal{F}_{[1]}(L(E_{1-})) \equiv \mathcal{F}_{[1]}(L(E))
\ \mbox{ and}\  \mbox{det}(I-A^t_{E_{1-}})
= -\mbox{det}(I-A^t_{E}).$$
\end{prop}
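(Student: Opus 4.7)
The plan is to follow closely the template of Proposition \ref{differentsigns}, exploiting the fact that the extra vertex $v_{n+3}$ has been engineered for exactly one purpose: to absorb the discrepancy in the $[1]$-class that prevented $\mathcal{F}_{[1]}(L(E_{-}))$ from equaling $\mathcal{F}_{[1]}(L(E))$ in the earlier construction. The key structural observation is that $v_{n+3}$ is a source of $E_{1-}$ (the last column of $A_{E_{1-}}$ is zero), its unique outgoing edge lands at $v_n$, and removing it recovers $E_{-}$; that is, $E_{-} = (E_{1-})_{\backslash v_{n+3}}$.

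For the determinant claim, I would simply chain together two facts. First, since $v_{n+3}$ is a source, Lemma \ref{samedet} (or more precisely, the cofactor expansion argument inside its proof) gives
$$\det(I - A^t_{E_{1-}}) \ = \ \det(I - A^t_{E_{-}}).$$
Second, Proposition \ref{differentsigns} yields $\det(I - A^t_{E_{-}}) = -\det(I - A^t_E)$, and combining these completes the determinant statement.

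For the Franks pair, I would describe $\mathcal{V}(L(E_{1-}))$ via \cite[Theorem 3.5]{AMFP} as the monoid on generators $v_1,\dots,v_{n+3}$ with the $E$-relations for $v_1,\dots,v_{n-1}$, the three $E_{-}$-relations at $v_n,v_{n+1},v_{n+2}$, and one additional relation $v_{n+3}=v_n$ arising from the single edge leaving $v_{n+3}$. Purely infinite simplicity of $L(E_{1-})$ (verified via the Purely Infinite Simplicity Theorem, as for $E_{-}$) lets me pass to $K_0$ using \cite[Corollary 2.2]{AGP}. Exactly as in Proposition \ref{differentsigns}, the relations at $v_{n+1}$ and $v_{n+2}$ force $[v_{n+1}]=0$ and $[v_{n+2}]=-[v_n]$ in $K_0(L(E_{1-}))$, while the new relation gives $[v_{n+3}]=[v_n]$. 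The generators $[v_1],\dots,[v_n]$ then satisfy exactly the defining relations of $K_0(L(E))$, and the map $[v_i]\mapsto[v_i]$ (for $1\leq i\leq n$) is a surjective group isomorphism $K_0(L(E))\to K_0(L(E_{1-}))$.

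The final, crucial step is to compute the order unit under this isomorphism:
$$[1_{L(E_{1-})}] \ = \ \sum_{i=1}^{n+3}[v_i] \ = \ \sum_{i=1}^{n}[v_i] \ + \ [v_{n+1}] \ + \ [v_{n+2}] \ + \ [v_{n+3}] \ = \ [1_{L(E)}] \ + \ 0 \ + \ (-[v_n]) \ + \ [v_n] \ = \ [1_{L(E)}].$$
Here the addition of $v_{n+3}$ is precisely what cancels the stray $-[v_n]$ coming from $[v_{n+2}]$, which is why passing from $E_{-}$ to $E_{1-}$ upgrades a mere $K_0$-isomorphism into a unitary-Franks-pair equivalence. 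There is no genuine obstacle in the argument; the whole proof is a short bookkeeping check once one has the correct monoid presentation and the source-elimination viewpoint in hand.
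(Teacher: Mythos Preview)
Your proposal is correct and follows essentially the same approach as the paper: both use Lemma \ref{samedet} together with Proposition \ref{differentsigns} for the determinant, present $K_0(L(E_{1-}))$ via \cite[Theorem 3.5]{AMFP} and \cite[Corollary 2.2]{AGP}, derive $[v_{n+1}]=0$, $[v_{n+2}]=-[v_n]$, $[v_{n+3}]=[v_n]$, and then verify that the identity-on-generators isomorphism carries $[1_{L(E_{1-})}]$ to $[1_{L(E)}]$. The only cosmetic difference is that the paper writes the isomorphism in the direction $K_0(L(E_{1-}))\to K_0(L(E))$ rather than the reverse.
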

\begin{proof}
Since $E_{-} = \left(E_{1-}\right)_{\backslash v_{n+3}}$, we get
from Lemma \ref{samedet} that $\det(I-A^t_{E_{1-}})
=\det(I-A^t_{E_{-}}) = -\det(I-A^t_E)$. We follow a  strategy similar to the one used in the proof of Proposition \ref{differentsigns} to conclude that the inclusion map is an isomorphism
between $K_0(L(E))$ and $K_0(L(E_{1-}))$.  Specifically, by again using
\cite[Theorem 3.5]{AMFP}, the monoid  $V(L(E_{1-}))$ is
generated by $v_1, \dots , v_{n+3}$ with relations $v_i=\sum\limits
_{j=1}^{n}A_e(i,j)v_j$ ($1\leq i\leq n-1$), together with the four
relations
$$v_n= v_{n+1} + \sum\limits _{j=1}^{n}A_e(i,j)v_j, \
v_{n+1}=v_n+v_{n+1}+v_{n+2}, \ v_{n+2}=v_{n+1}+v_{n+2},  \ v_{n+3}=v_n.$$
  Again, we apply \cite[Corollary 2.2]{AGP} to get the
isomorphism $K_0(L(E_{1-})) \cong V(L(E_{1-}))^*$, and note that
$[v_{n+2}]= -[v_n]$, $[v_{n+3}]=[v_n]$ and $[v_{n+1}]=[0]$, so that
$[v_i]=\sum\limits _{j=1}^{n}A_e(i,j)[v_j]$ ($1\leq i\leq n$).
Hence, the map
\begin{eqnarray*}
 {[v_i]}     & \mapsto & [v_i]\  \ \  (1 \leq i\leq n) \\
 {[v_{n+2}]} & \mapsto & -[v_n] \\
 {[v_{n+3}]} & \mapsto & [v_n]
\end{eqnarray*}
defines an isomorphism $\varphi$ from $K_0(L(E_{1-}))$ to
$K_0(L(E))$.  In addition,
$$\varphi([1_{L(E_{1-})}]) =
[1_{L(E)}] + [v_{n+1}] + [v_{n+2}] + [v_{n+3}] = [1_{L(E)}] + [v_n]
- [v_n] = [1_{L(E)}],$$
which yields the equivalence $\mathcal{F}_{[1]}(L(E_{1-})) \equiv \mathcal{F}_{[1]}(L(E))$.
\end{proof}

In particular, for any finite graph $E$ with $L(E)$ purely infinite simple, we
now have a construction of another graph $E_{1-}$ which shares its unitary Franks
pair, but differs in $\mbox{sgn}(\det(I-A_E^t))$.  We now assume the Hypothesis, and
analyze the consequences for isomorphisms.

\begin{prop}\label{1morita_equiv}
If the Hypothesis holds, then for any finite graph $E$ such that
$L(E)$ is a purely infinite simple Leavitt path algebra,  there is
a Morita equivalence $L(E)\sim_M L(E_{1-})$.
\end{prop}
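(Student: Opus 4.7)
The plan is to chain two Morita equivalences already at our disposal, with $L(E_{-})$ serving as the intermediate algebra.

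First, observe from the incidence matrix $A_{E_{1-}}$ that the last column is identically zero while the last row has a single nonzero entry (placing one edge from $v_{n+3}$ to $v_n$); that is, $v_{n+3}$ is a source in $E_{1-}$, and removing it yields precisely $E_{-}$, as already noted in the excerpt (\emph{i.e.}, $E_{-} = (E_{1-})_{\backslash v_{n+3}}$). Since $L(E_{1-})$ is purely infinite simple unital (this was observed in the paragraph preceding Proposition \ref{1differentsigns}), and since $E_{1-}$ clearly contains at least two vertices, Proposition \ref{sourceelimnationprop} applies with $v = v_{n+3}$, producing a Morita equivalence
\[
L(E_{1-}) \sim_M L((E_{1-})_{\backslash v_{n+3}}) = L(E_{-}).
\]

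Second, assuming the Hypothesis, Theorem \ref{Th:Cuntz reduction} provides the Morita equivalence
\[
L(E) \sim_M L(E_{-}).
\]

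Composing these two Morita equivalences (or, equivalently, invoking transitivity of $\sim_M$) yields $L(E) \sim_M L(E_{-}) \sim_M L(E_{1-})$, as desired. There is no serious obstacle here: the single observation required is that $v_{n+3}$ is a source in $E_{1-}$ whose elimination recovers $E_{-}$, after which the result is immediate from the previously established tools.
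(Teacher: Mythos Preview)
Your proof is correct and follows exactly the same approach as the paper: both apply Theorem \ref{Th:Cuntz reduction} to get $L(E)\sim_M L(E_{-})$, then use Proposition \ref{sourceelimnationprop} with the source $v_{n+3}$ (together with the identification $E_{-}=(E_{1-})_{\backslash v_{n+3}}$) to get $L(E_{-})\sim_M L(E_{1-})$, and finally compose. Your version is simply a bit more explicit about verifying the hypotheses of Proposition \ref{sourceelimnationprop}.
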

\begin{proof}
Applying Theorem \ref{Th:Cuntz reduction} and Proposition \ref{sourceelimnationprop},
we get $$L(E) \sim_M L(E_{-}) = L((E_{1-})_{\backslash v_{n+3}}) \sim_M L(E_{1-}).$$
\end{proof}

Combining the equivalence of the unitary Franks pair from Proposition
\ref{1differentsigns} with the Morita equivalence from Proposition \ref{1morita_equiv},
we are in position to apply Theorem \ref{Th:morita_iso} and obtain the following key connecting result, one which allows us to cross the
``determinant gap".

\begin{prop}\label{1iso}
If the Hypothesis holds, then for any finite graph $E$ such that
$L(E)$ is a purely infinite simple Leavitt path algebra,  there is
an isomorphism $L(E)\cong L(E_{1-})$.
\end{prop}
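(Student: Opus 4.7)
The plan is to observe that Proposition \ref{1iso} is essentially the combination of the two immediately preceding results, fed into the isomorphism criterion of Theorem \ref{Th:morita_iso}. Specifically, Proposition \ref{1morita_equiv} already provides, under the Hypothesis, a Morita equivalence $L(E) \sim_M L(E_{1-})$, while Proposition \ref{1differentsigns} provides the equivalence of unitary Franks pairs $\mathcal{F}_{[1]}(L(E_{1-})) \equiv \mathcal{F}_{[1]}(L(E))$.

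The first step is to verify that both $L(E)$ and $L(E_{1-})$ are purely infinite simple unital Leavitt path algebras over finite graphs. Purely infinite simplicity of $L(E)$ is a hypothesis; for $L(E_{1-})$ this was noted during the construction of $E_{1-}$, using the Purely Infinite Simplicity Theorem (the additional vertices $v_{n+1}, v_{n+2}, v_{n+3}$ together with the prescribed edges preserve the condition that every cycle has an exit and every vertex connects to a cycle). Unitality on both sides follows from finiteness of the vertex sets.

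Once the hypotheses of Theorem \ref{Th:morita_iso} are in place, the second step is a direct application: the Morita equivalence from Proposition \ref{1morita_equiv} and the equivalence of unitary Franks pairs from Proposition \ref{1differentsigns} together yield a $K$-algebra isomorphism
\[
L(E) \cong L(E_{1-}),
\]
as desired. There is no genuine obstacle here beyond correctly invoking the two preceding propositions; all the deep work (the Hypothesis-based reduction from Theorem \ref{Th:Cuntz reduction}, the source-elimination Morita equivalence from Proposition \ref{sourceelimnationprop}, and Huang's flow-equivalence theorem packaged into Theorem \ref{Th:morita_iso}) has already been carried out. In short, the proof will read: apply Proposition \ref{1morita_equiv} to obtain Morita equivalence, apply Proposition \ref{1differentsigns} to match the unitary Franks pairs, and conclude by Theorem \ref{Th:morita_iso}.
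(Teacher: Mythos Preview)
Your proposal is correct and matches the paper's approach exactly: the paper states Proposition \ref{1iso} as an immediate consequence of combining the Morita equivalence from Proposition \ref{1morita_equiv} with the equivalence of unitary Franks pairs from Proposition \ref{1differentsigns}, and then applying Theorem \ref{Th:morita_iso}. Your additional remark verifying that $L(E_{1-})$ is purely infinite simple unital is appropriate and was noted in the paper at the point where $E_{1-}$ is introduced.
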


As a consequence, we obtain

\begin{theor}\label{HypotheticalBigFish}
If the Hypothesis holds, then $\mathcal{F}_{[1]}$ precisely classifies purely infinite
simple unital Leavitt path algebras up to isomorphism.
\end{theor}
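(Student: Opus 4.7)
The plan is to combine the existing sufficient-condition machinery (Corollary \ref{Th:Almost BigFish}) with the ``Hypothesis-enabled'' graph construction $E_{1-}$ to cross the sign-of-determinant gap. Necessity of the unitary Franks pair invariant is already recorded just after Corollary \ref{Th:Almost BigFish}, so only sufficiency requires attention: given finite graphs $E,G$ with $L(E),L(G)$ purely infinite simple unital and $\mathcal{F}_{[1]}(L(E)) \equiv \mathcal{F}_{[1]}(L(G))$, we must produce an isomorphism $L(E) \cong L(G)$.

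The first step is to note, exactly as in the proof of Corollary \ref{determinantabsval}, that an abelian group isomorphism $K_0(L(E)) \cong K_0(L(G))$ coming from an equivalence of unitary Franks pairs already forces $|\det(I-A_E^t)| = |\det(I-A_G^t)|$ via the invariance of Smith normal form. Hence only two cases arise, according to whether the two integers agree in sign or differ in sign. In the agreeing-sign case there is nothing new to do: the equality $\det(I-A_E^t) = \det(I-A_G^t)$ together with $\mathcal{F}_{[1]}(L(E)) \equiv \mathcal{F}_{[1]}(L(G))$ yields $\mathcal{F}_3(L(E)) \equiv \mathcal{F}_3(L(G))$, and Corollary \ref{Th:Almost BigFish} produces the desired isomorphism.

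In the differing-sign case I would pass through the auxiliary graph $E_{1-}$ defined just before the theorem. Proposition \ref{1differentsigns} supplies both $\mathcal{F}_{[1]}(L(E_{1-})) \equiv \mathcal{F}_{[1]}(L(E))$ and $\det(I-A^t_{E_{1-}}) = -\det(I-A^t_E)$. Concatenating with the assumed $\mathcal{F}_{[1]}(L(E)) \equiv \mathcal{F}_{[1]}(L(G))$ gives $\mathcal{F}_{[1]}(L(E_{1-})) \equiv \mathcal{F}_{[1]}(L(G))$, while the sign flip realigns $\det(I-A^t_{E_{1-}})$ with $\det(I-A^t_G)$. Thus $\mathcal{F}_3(L(E_{1-})) \equiv \mathcal{F}_3(L(G))$, and Corollary \ref{Th:Almost BigFish} delivers $L(E_{1-}) \cong L(G)$. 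Simultaneously, Proposition \ref{1iso}, whose very statement is contingent on the Hypothesis, provides the bridge isomorphism $L(E) \cong L(E_{1-})$. Transitivity closes the argument: $L(E) \cong L(E_{1-}) \cong L(G)$.

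The only substantive step is the invocation of Proposition \ref{1iso}; every other part of the plan is bookkeeping with results already in place. Of course, the genuine obstacle is hidden inside the Hypothesis itself, which is precisely the open point flagged by the authors as the remaining obstruction to unconditional classification of purely infinite simple unital Leavitt path algebras by $\mathcal{F}_{[1]}$.
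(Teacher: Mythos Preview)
Your proposal is correct and follows essentially the same approach as the paper: split into the two determinant-sign cases via the Smith normal form observation, handle the equal-sign case directly by Corollary \ref{Th:Almost BigFish}, and in the opposite-sign case route through $E_{1-}$ using Proposition \ref{1differentsigns} and Proposition \ref{1iso}. The only cosmetic difference is that you unpack the Franks triple equivalence $\mathcal{F}_3(L(E_{1-})) \equiv \mathcal{F}_3(L(G))$ explicitly via Proposition \ref{1differentsigns}, whereas the paper simply asserts it, and you also remark on the (already-known) necessity direction.
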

\begin{proof}
Let $E,G$ be finite graphs for which $L(E)$ and $L(G)$ are purely infinite
simple and $\mathcal{F}_{[1]}(L(E)) \equiv \mathcal{F}_{[1]}(L(G))$.  By Corollary
\ref{determinantabsval}, either $\det(I-A_E^t) = \det(I-A_G^t)$, or
$\det(I-A_E^t) = -\det(I-A_G^t)$.  In the first case, we have
$\mathcal{F}_{3}(L(E)) \equiv \mathcal{F}_{3}(L(G))$,
so Corollary \ref{Th:Almost BigFish} gives the desired isomorphism.  Otherwise,
we have $\mathcal{F}_{3}(L(E_{1-})) \equiv \mathcal{F}_{3}(L(G))$, so by
Proposition \ref{1iso} and Corollary \ref{Th:Almost BigFish}, we get
$$L(E) \cong L(E_{1-}) \cong L(G),$$ and the theorem follows.
\end{proof}

\section{Some general isomorphism and Morita equivalence results for Leavitt
path algebras}\label{Extra}

In Section \ref{Morita Equiv} we presented four results regarding
Morita equivalences between Leavitt path algebras.  These four
specific results were precisely those which we needed to achieve the
first main result of this article, Theorem
\ref{Franksinvariantsufficient}.  In the final section of this
article, we present a number of similarly-flavored results which we
believe are of interest in their own right.    Along the way we will
give generalizations of Propositions  \ref{sourceelimnationprop},
\ref{expansionprop}, and \ref{insplittingprop} to wider classes of
graphs.

Information about various topics presented in this section pertaining to Leavitt path algebras (e.g. the $\Z$-grading on $L(E)$, and the natural action as automorphisms of $K^*$ on $L_K(E)$) can be found in \cite{AbAnhLouP}.  Information about Morita equivalence for not-necessarily-unital rings can be found in \cite{GS}.

Here is the indicated generalization of Proposition \ref{sourceelimnationprop}.

\begin{prop}\label{P:erase sources}
Let $E$ be a row-finite graph with no sinks, let $v\in E^0$ be a
source, and let $E_{\backslash v}$ be the source elimination graph.
Then $L(E)$ and $L(E_{\backslash v})$ are Morita equivalent.
\end{prop}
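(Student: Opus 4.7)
The obstacle here, in contrast with Proposition \ref{sourceelimnationprop}, is that without simplicity and unitality of $L(E)$ we cannot invoke Lemma \ref{morita_lem}. The plan is to establish the Morita equivalence directly, by realizing $L(E_{\backslash v})$ as a \emph{full corner} of $L(E)$ and then invoking the standard Morita theorem (in the unital case) or its generalization for rings with local units (in the general row-finite case).

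First I would note, exactly as in the proof of Proposition \ref{sourceelimnationprop}, that since $E_{\backslash v}$ is a complete subgraph of $E$, the Universal Homomorphism Property produces a ring homomorphism $\pi: L(E_{\backslash v}) \to L(E)$ sending generators to generators. The computation in that earlier proof showing $\pi(L(E_{\backslash v})) = p\, L(E)\, p$, where $p$ denotes the (possibly formal) sum $\sum_{w \neq v} w$, used only the fact that $v$ is a source: every monomial $\mu_1 \mu_2^*$ in $L(E)$ with $s(\mu_1), s(\mu_2) \neq v$ automatically avoids $v$ altogether, because no edge terminates at $v$. That argument therefore goes through verbatim in the present setting, giving an isomorphism of $L(E_{\backslash v})$ onto the corner $p\,L(E)\,p$.

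Next I would verify that this corner is \emph{full}, i.e.\ that the two-sided ideal of $L(E)$ generated by $p\,L(E)\,p$ contains every vertex of $E$. For $w \neq v$, the vertex $w$ itself lies in $p\,L(E)\,p$. For $v$, the hypothesis that $E$ has no sinks guarantees $s^{-1}(v) \neq \emptyset$, so the CK2 relation at $v$ gives
\[
v \;=\; \sum_{e \in s^{-1}(v)} e e^* \;=\; \sum_{e \in s^{-1}(v)} e\, r(e)\, e^*,
\]
and since $v$ is a source we have $r(e) \neq v$ for each such edge $e$, so every term $e\, r(e)\, e^*$ lies in $L(E) \cdot p L(E) p \cdot L(E)$. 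Hence $v$, and therefore all of $L(E)$, lies in the ideal generated by the corner.

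Finally, to conclude Morita equivalence: when $E$ is finite, $L(E)$ is unital, $p$ is a full idempotent, and the classical Morita theorem gives $L(E) \sim_M p L(E) p \cong L(E_{\backslash v})$. For general row-finite $E$ the ring $L(E)$ is non-unital but carries the canonical set of local units $\{\sum_{w \in F} w : F \subseteq E^0 \text{ finite}\}$, and the same conclusion follows from the Morita theory for rings with local units, since the ``full corner'' condition verified above is precisely what produces a Morita context between $L(E_{\backslash v})$ and $L(E)$. The main obstacle I anticipate is purely bookkeeping: carefully interpreting ``Morita equivalence'' in the non-unital setting and checking that the bimodules $L(E)\, p$ and $p\, L(E)$ serve to transport finitely generated projective modules back and forth as they do in the unital case.
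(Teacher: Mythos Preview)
Your approach is essentially the same as the paper's: embed $L(E_{\backslash v})$ as a corner of $L(E)$ via the canonical map $\pi$, verify that the corner is full (you do this by a direct CK2 computation at $v$, the paper phrases it as ``$E^0$ is the hereditary saturated closure of $F^0$''), and then invoke Morita theory for rings with local units.

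There is, however, one genuine gap. You write that the computation of the image ``goes through verbatim \dots\ giving an isomorphism of $L(E_{\backslash v})$ onto the corner $p\,L(E)\,p$''. The image computation only shows that $\pi$ is \emph{surjective} onto the corner; it says nothing about injectivity. In Proposition~\ref{sourceelimnationprop} injectivity came for free inside Lemma~\ref{morita_lem}, because a nonzero ring homomorphism out of a \emph{simple} ring is automatically injective. That argument is precisely what is no longer available here, and it does not ``go through verbatim''. The paper fills this gap by observing that $\pi$ is a $\Z$-graded ring homomorphism with $\pi(w)\neq 0$ for every vertex $w$ of $E_{\backslash v}$, and then appealing to the graded uniqueness theorem (\cite[Lemma~1.1]{AbAnhLouP}) to conclude injectivity. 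You should insert this step; once you do, the rest of your argument is fine and matches the paper's.
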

\begin{proof}
By definition of $F=E_{\backslash v}$, it is clear that $F$ is a
(complete) subgraph of $E$. Thus, the $K$-algebra map defined by the
rule
$$\begin{array}{cccc}
\phi & L(F) & \longrightarrow & L(E) \\
 & w & \mapsto & w\\
 & e & \mapsto & e\\
     & e^* & \mapsto & e^*
\end{array}
$$
for every $w\in F^0$ and every $e\in F^1$, is a $\Z$-graded ring homomorphism
such that $\phi(w)\ne 0$ for every $w\in F^0$. Hence $\phi$ is
injective  by \cite[Lemma 1.1]{AbAnhLouP}.

Set $F^0=\{ w_i\}_{i\geq 1}$. For each $n\geq 1$ define
$e =\sum\limits_{i=1}^nw_i$. Then $\{ e_n\}_{n\geq 1}$ is a set of local
units for $L(F)$, and since $v$ is a source, $\phi
(L(F))=\bigcup\limits_{n\geq 1}e_nL(E)e_n$. Moreover, as
$r(s^{-1}(v))\subset F^0$, $E^0$ turns out to be the hereditary
saturated closure of $F^0$. Hence, we get
$L(E)=\bigcup\limits_{n\geq 1}L(E)e_nL(E)$. Thus, it is not
difficult to see that
$$(\sum\limits_{n\geq 1}e_nL(E)e_n, \ \sum\limits_{n\geq 1}L(E)e_nL(E), \
\sum_{n\geq 1}L(E)e_n, \  \sum_{n\geq 1}e_nL(E))$$ is a (surjective)
Morita context for the rings $L(E)$ and $L(F)$, as desired.
\end{proof}

The following definition are borrowed from \cite[Section 4]{Flow}.

\begin{defis}\label{outdelaydefinition}
{\rm Let $E = ( E^0, E^1, r, s )$ be a row-finite graph. A map $d_s : E^0
\cup E^1 \rightarrow \N \cup \{ \infty \}$ such that
\begin{enumerate}
\item if $w \in E^0$ is not a sink then $d_s (w) = \sup \{ d_s (e)
\mid s(e)=w \}$, and \item if $d_s ( x ) = \infty$ for some $x$, then
$x$ is a sink
\end{enumerate}

\noindent is called a {\em Drinen source-vector}. Note that only
vertices are allowed to have an infinite $d_s$-value. From this
data we construct a new graph $d_s(E)$ as follows: Let
\begin{align*}
d_s (E)^0 &= \{ v^i \mid v \in E^0 , 0 \le i \le d_s (v) \}, \text{ and } \\
d_s (E)^1 &= E^1 \cup \{ f(v)^i \mid 1 \le i \le d_s (v)\} ,
\end{align*}

\noindent and for $e \in E^1$ define $r_{d_s (E)} (e) = r(e)^0$ and
$s_{d_s (E)} (e) = s(e)^{d_s (e)}$. For $f (v)^i$ define $s_{d_s (E)} (
f(v)^{i} ) = v^{i-1}$ and $r_{d_s (E)} ( f(v)^i ) = v^i$.  The
resulting directed graph $d_s (E)$ is called the {\em out-delayed
graph of $E$} for the Drinen source-vector $d_s$.
}
\end{defis}

In the out-delayed graph the original vertices correspond to those
vertices with superscript $0$.  Intuitively, the edge $e \in E^1$ is ``delayed"
from leaving $s(e)^0$ and arriving at $r(e)^0$ by a path of length
$d_s (e)$.

\begin{theor}\label{outdelaysme}
Let $K$ be an infinite field.  Let $E$ be a row-finite graph and let $d_s : E^0 \cup E^1
\rightarrow \N \cup \{ \infty \}$ be a Drinen source-vector. Then
$L ( d_s ( E ) )$ is Morita equivalent to $L(E)$.
\end{theor}
\begin{proof}
The argument is essentially the same as in the proof of
\cite[Theorem 4.2]{Flow}, except for the proof of the injectivity of the map
$\pi$, and the proof of the Morita equivalence of $L(E)$ and
$L(d_s(E))$. We include the whole argument for the sake of
completeness.

Given $e \in E^1$ and $v \in E^0$, define $Q_v = v^0$, and define $T_e$ by setting
\[
T_e = f(s(e))^1 \ldots f(s(e))^{d_s (e)} e \text{ if } d_s (e)
\neq 0,   \ \ \text{ and } \ \  T_e = e \text{ otherwise.}
\]
\noindent We claim that $\{ T_e , Q_v \mid e \in E^1 , v \in E^0 \}$
is an $E$-family in $L( d_s ( E ) )$. The $Q_v$'s are nonzero mutually
orthogonal idempotents since the $v^0$'s are. The elements $T_e$ for
$e \in E^1$ clearly satisfy $T_e^*T_f=0$ whenever $e\ne f$, because
they consist of sums of elements with the same property. For $e \in
E^1$ it is routine to check that $T_e^* T_e = Q_{r(e)}$.

If $v \in E^0$ is not a sink, then $d_s (v) < \infty$. If $d_s (v)
=0$, then we certainly have $Q_v = \sum\limits_{\{s(e)=v\}} T_e
T_e^*$. Otherwise, for $0 \le j \le d_s (v)-1$ we have
\begin{equation} \label{gantletrels}
v^j = \sum\limits_{\{s(e)=v , d_s (e)=j\}} e e^* + f(v)^{j+1}
v^{j+1} (f(v)^{j+1})^* ,
\end{equation}

\noindent and since we must have some edges with $s(e)=v$ and $d_s
(e) = d_s (v)$ we have
\begin{equation} \label{endcond}
v^{d_s (v)} = \sum\limits_{\{s(e)=v , d_s (e) = d_s (v)\}} e e^* .
\end{equation}

\noindent Using (\ref{gantletrels}) recursively and
(\ref{endcond}) when $j=d_s (v)-1$ we see that
\[
Q_v = v^0 = \sum_{\{s(e)=v,d_s(e)=0\}} T_e T_e^* + \cdots +
\sum_{\{s(e)=v,d_s(e)=d_s(v)\}} T_e T_e^* = \sum_{\{s(e)=v\}} T_e
T_e^* ,
\]
and this establishes our claim.

So by the Universal Homomorphism Property of $L_K(E)$ there is a $K$-algebra homomorphism
$$\pi : L_K(E) \rightarrow L_K( d_s ( E ) )$$
which takes $e$ to $T_e$
and $v$ to $Q_v$.

 Let $\alpha^{d_s(E)}$ denote the
$K$-action as automorphisms on $L_K( d_s ( E )$ satisfying, for each $t \in K^* = K \setminus \{0\}$,
$$
\alpha^{d_s(E)}_t (e) = t e , \  \  \alpha^{d_s(E)}_t({f(v)^i}) = {f(v)^i} \text{ for } 1
\le i \le d_s(v), \ \text{ and } \ \alpha^{d_s(E)}_t ({v^i}) = {v^i} \text{ for
} 0 \le i \le d_s(v).
$$

We now establish the injectivity of $\pi$ for all fields $K$. It is
straightforward to check that $\pi \circ \tau^E_t =
\alpha^{d_s(E)}_t \circ \pi$ for each $t \in K^*$, where $\tau^E$ is
the  standard action of $K$ on $L_K(E)$ given in \cite[Definition
1.5]{AbAnhLouP}.   Since $K$ is infinite,  it follows from
\cite[Theorem 1.8]{AbAnhLouP} that $\pi$ is injective.

Now enumerate the vertices $v^0\in d_s(E)^0$ by $\{ v_i^0\mid i\geq
1\}$, and define $\{ e_n\}_{n\geq 1}$ by
$e_n=\sum\limits_{i=1}^{n}v_i^0$. Then $\{ e_n\}_{n\geq 1}$ is an
ascending chain of idempotents in $L(d_s(E))$. Let $A=\bigcup\limits
_{n\geq1} e_n L(d_s(E))e_n$ be the subalgebra of $L(d_s(E))$ with set of  local units $\{ e_n\}_{n\geq 1}$. The same argument as in
\cite[Theorem 4.2]{Flow} shows that $A=\pi (L(E))$, which is
isomorphic to $L(E)$ by the previously established injectivity of
$\pi$. Also, the same argument as in \cite[Lemma 2.4]{APS} shows
that $A$ is Morita equivalent to the ideal
$$I=\bigcup\limits _{n\geq 1}L(d_s(E))e_nL(d_s(E))=
\sum\limits _{v\in E^0}L(d_s(E))v^0 L(d_s(E))=I(\{ v^0\mid v\in
E^0\}).$$ Since $d_s(E)^0$ is the hereditary saturated closure of
$\{ v^0\mid v\in E^0\}$, $I=L(d_s(E))$ by \cite[Lemma 2.1]{APS},
whence the result holds.
\end{proof}

Let $E$ be a row-finite graph, and let $v\in E^0$.  We define the
following  Drinen source-vector:
\begin{enumerate}
\item For every $w\in E^0\setminus \{ v\}$, $d_s(w)=0$, while
$d_s(v)=1$.\item For every $f\in E^1\setminus s^{-1}(v)$,
$d_s(f)=0$, while $d_s(e)=1$ for any $e\in s^{-1}(v)$.
\end{enumerate}
Then it is straightforward to see that
$$ E_{v} = d_s(E).$$
In other words, the out-delay graph $d_s(E)$ related to this particular  Drinen source-vector is precisely the expansion graph
$E_{v}$.   With this observation, Theorem \ref{outdelaysme} then
immediately yields this more general version of Proposition
\ref{expansionprop}

\begin{corol}\label{C:expand==MorEquiv}
Let $K$ be an infinite field.  Let $E$ be a row-finite graph, and $v\in E^0$.   Then $L(E_{v})$ is Morita equivalent to $L(E)$.
\end{corol}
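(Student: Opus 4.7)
The plan is to specialize Theorem \ref{outdelaysme} via a carefully chosen Drinen source-vector, as foreshadowed in the paragraph immediately preceding the corollary. Concretely, I would define $d_s : E^0 \cup E^1 \to \N \cup \{\infty\}$ by setting $d_s(v)=1$ and $d_s(w)=0$ for every $w \in E^0 \setminus \{v\}$, and setting $d_s(e)=1$ for every $e \in s^{-1}(v)$ while $d_s(f)=0$ for every $f \in E^1 \setminus s^{-1}(v)$. First I would confirm that this assignment satisfies the two axioms of a Drinen source-vector: no value is $\infty$, so condition (2) is vacuous, and condition (1) holds because for $v$ itself we have $\sup\{d_s(e) \mid s(e)=v\} = 1 = d_s(v)$ (assuming $s^{-1}(v) \neq \emptyset$; if $v$ is a sink, the condition is not required), while for any other non-sink vertex $w$, every edge leaving $w$ has $d_s$-value $0$, matching $d_s(w)=0$.

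Next I would identify $d_s(E)$ with $E_v$ by a direct check of the defining data. The vertex set $d_s(E)^0 = \{w^0 \mid w \in E^0\} \cup \{v^1\}$ matches $E_v^0 = E^0 \cup \{v^\ast\}$ under the identification $w^0 \leftrightarrow w$ and $v^1 \leftrightarrow v^\ast$. The edge set $d_s(E)^1 = E^1 \cup \{f(v)^1\}$ matches $E_v^1 = E^1 \cup \{f\}$ with $f(v)^1 \leftrightarrow f$. For the incidence maps: for $e \in E^1$ with $s(e) = v$ we have $s_{d_s(E)}(e) = v^{d_s(e)} = v^1$, which corresponds to $s_{E_v}(e) = v^\ast$; for $e \in E^1$ with $s(e) \neq v$ we have $s_{d_s(E)}(e) = s(e)^0 = s(e)$; for every $e \in E^1$, $r_{d_s(E)}(e) = r(e)^0 = r(e)$; and finally $s_{d_s(E)}(f(v)^1) = v^0 = v$ while $r_{d_s(E)}(f(v)^1) = v^1 = v^\ast$, matching the defining data of $E_v$ on the new edge $f$.

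Having established the graph identification $d_s(E) = E_v$, Theorem \ref{outdelaysme} immediately gives that $L(E_v) = L(d_s(E))$ is Morita equivalent to $L(E)$, which is the desired conclusion. There is no real obstacle here: all the hard work of constructing the $E$-family, verifying the universal homomorphism is injective (even over finite fields, via scalar extension), and building the Morita context through local units has already been carried out in the proof of Theorem \ref{outdelaysme}. The only care required is the routine bookkeeping of the identification in the previous paragraph, and verifying that the chosen $d_s$ is indeed a legitimate Drinen source-vector.
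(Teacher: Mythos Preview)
Your proposal is correct and follows exactly the approach taken in the paper: the paragraph preceding the corollary defines precisely the Drinen source-vector you wrote down, asserts (without the detailed bookkeeping you supply) that $E_v = d_s(E)$, and then invokes Theorem \ref{outdelaysme}. You have simply filled in the routine verifications that the paper leaves to the reader.
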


We again borrow  definitions from \cite[Section 4]{Flow}.

\begin{defis}\label{indelaydefinition}
{\rm Let $E = ( E^0, E^1, r, s )$ be a row-finite graph. A map $d_r : E^0 \cup E^1
\rightarrow \N \cup \{ \infty \}$ satisfying
\begin{enumerate}
\item if $w$ is not a source then $d_r (w) = \sup \, \{ d_r (e)
\mid r(e)=w \}$, and
 \item if $d_r ( x ) = \infty$  then $x$ is either
a source or receives infinitely many edges
\end{enumerate}

\noindent is called a {\em Drinen range-vector}. We construct a
new graph $d_r(E)$, called the {\em in-delayed graph of $E$} for
the Drinen range-vector $d_r$, as follows:
\begin{align*}
d_r (E)^0 &= \{ v_i \mid v \in E^0 , 0 \le i \le d_r (v) \},  \\
d_r (E)^1 &= E^1 \cup \{ f(v)_i \mid 1 \le i \le d_r (v) \}.
\end{align*}

\noindent For $e \in d_r (E)^1$ with $e \in E^1$ we define $r_{d_r (E)} (e) = r(e)_{d_r
(e)}$ and $s_{d_r (E)} (e) = s(e)_0$. For $f\in d_r (E)^1$ of the form $f = f (v)_i$ we define $s_{d_r (E)} (
f(v)_i ) = v_{i}$ and $r_{d_r (E)} ( f(v)_i ) = v_{i-1}$.
}
\end{defis}

\begin{theor} \label{indelayme}
Let $K$ be an infinite field.  Let $E$ be a row-finite graph and let $d_r : E^0 \cup E^1 \rightarrow \N
\cup \{ \infty \}$ be a Drinen range-vector. Then $L( d_r ( E )
)$ is Morita equivalent to $L(E)$.
\end{theor}
\begin{proof}
The proof  is essentially identical to the proof of \cite[Theorem 4.5]{Flow},
using  arguments analogous to those used in  the proof of Theorem \ref{outdelaysme}.
\end{proof}

We now give an additional condition on the previously defined notion
of an {\it in-split graph} (see the notation presented in Definitions
\ref{def_insplit}).
\begin{defi}
{\rm  Let $E$ be a graph, let $\mathcal{P}$ be a partition of $E^1$, and let
$m$ be as described in Definitions \ref{def_insplit}. $\mathcal{P}$ is
called {\em proper} if for every vertex $v$ which is a sink we have
$m(v)=0$ or $m(v)=1$. (That is, $\mathcal{P}$ if proper if $\mathcal{P}$ does not in-split at a sink.)}
\end{defi}

To relate the Leavitt path algebra of a graph to the Leavitt path
algebras of its in-splittings we use a variation of the method
introduced in \cite[Section 4.2]{d}: If $E_r ( \mathcal{P} )$ is the
in-split graph formed from $E$ using the partition $\mathcal{P}$
then we may define a Drinen range-vector $d_{r , \mathcal{P}} : E^0
\cup E^1 \rightarrow \N \cup \{ \infty \}$ by $d_{r , \mathcal{P}} (
v ) = m ( v )-1$ if $m(v) \ge 1$ and $d_{r, \mathcal{P}}(v)=0$
otherwise. For $e \in \mathcal{E}^{r(e)}_i$ we put $d_{r ,
\mathcal{P}} ( e ) = i-1$. Hence, if $v$ receives $n \ge 2$ edges
then we create an in-delayed graph in which $v$ is given delay of
size $m(v)-1$ and all edges with range $v$ are given a delay one
less than their label in the partition of $r^{-1} (v)$. If $v$ is a
source or receives only one edge then there is no delay attached to
$v$.

\begin{theor} \label{insplitindelay}
Let $K$ be an infinite field.  Let $E$ be a row-finite graph, $\mathcal{P}$ a partition of $E^1$,
$E_r ( \mathcal{P} )$ the in-split graph formed from $E$ using
$\mathcal{P}$ and $d_{r , \mathcal{P}} : E^0 \cup E^1 \rightarrow
\N \cup \{ \infty \}$  the Drinen range-vector defined as above.
Then $L( E_r ( \mathcal{P} ) ) \cong L ( d_{r ,\mathcal{P}} (E) )$
if and only if $\mathcal{P}$ is proper.
\end{theor}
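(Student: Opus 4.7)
The plan is to establish the two directions of the equivalence separately: for the ``if'' direction I would construct mutually inverse $K$-algebra homomorphisms via the Universal Homomorphism Property, while for the ``only if'' direction I would use the number of sinks of the underlying graph as a ring-theoretic invariant that distinguishes the two Leavitt path algebras when $\mathcal{P}$ is not proper.

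For the forward direction, assume $\mathcal{P}$ is proper. For each vertex $v \in E^0$ with $m(v) \ge 1$, set $\gamma_{v,j} = f(v)_1^* f(v)_2^* \cdots f(v)_j^*$ (a product of ghost chain edges, formally a ``ghost path'' in $d_{r,\mathcal{P}}(E)$ from $v_0$ to $v_j$, with $\gamma_{v,0} = v_0$). In the spirit of Theorem \ref{outdelaysme}, I would define a map $\phi : L(E_r(\mathcal{P})) \to L(d_{r,\mathcal{P}}(E))$ on generators by
$$
\phi(v_i) = \gamma_{v,i-1}^* \gamma_{v,i-1}, \qquad \phi(e_j) = f(s(e))_{j-1} \cdots f(s(e))_1 \cdot e,
$$
together with the corresponding definitions on $e_j^*$ and the obvious modifications for vertices $v$ with $m(v)=0$. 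A telescoping application of the CK2 relations at the chain vertices $v_1, \ldots, v_{i-1}$ in $d_{r,\mathcal{P}}(E)$---each of which satisfies $v_j = f(v)_j f(v)_j^*$ because its unique outgoing edge is the chain edge $f(v)_j$---collapses $\phi(v_i)$ down to $v_{i-1}$; this together with routine verification of the remaining CK1 and CK2 relations (the CK2 relation at $v_i$ reducing to CK2 at $v_0$ in $d_{r,\mathcal{P}}(E)$, which requires $v$ to be a non-sink of $E$) permits the Universal Homomorphism Property to deliver $\phi$.

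For the inverse, I would define $\psi : L(d_{r,\mathcal{P}}(E)) \to L(E_r(\mathcal{P}))$ on generators by $\psi(v_i) = v_{i+1}$, $\psi(e) = e_1$ for $e \in E^1$, and for each chain edge
$$
\psi(f(v)_j) = \sum_{e \in s_E^{-1}(v)} e_{j+1} e_j^*.
$$
This is precisely where properness of $\mathcal{P}$ is essential: a chain edge $f(v)_j$ exists in $d_{r,\mathcal{P}}(E)$ only when $m(v) \ge 2$, and properness ensures that any such $v$ is not a sink of $E$, so that $s_E^{-1}(v)$ is nonempty and $\psi(f(v)_j)$ is a well-defined nonzero element. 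The $E$-family relations are routine to verify---the key computation for CK1 at $f(v)_j$ is
$$
\psi(f(v)_j^*)\psi(f(v)_j) = \sum_e e_j e_{j+1}^* e_{j+1} e_j^* = \sum_e e_j e_j^* = v_j,
$$
via CK1 on $e_{j+1}^* e_{j+1}$ followed by CK2 at $v_j$ in $E_r(\mathcal{P})$---and the Universal Homomorphism Property then delivers $\psi$. A direct check on generators confirms that $\phi \circ \psi$ and $\psi \circ \phi$ are the respective identities.

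For the reverse direction, suppose $\mathcal{P}$ is not proper, so that some sink $v \in E^0$ has $m(v) = k \ge 2$. In $E_r(\mathcal{P})$ the vertices $v_1, \ldots, v_k$ are all sinks, whereas in $d_{r,\mathcal{P}}(E)$ the chain structure makes $v_1, \ldots, v_{k-1}$ non-sinks (they emit the chain edges $f(v)_i$), leaving $v_0$ as the only sink arising from $v$. Hence $E_r(\mathcal{P})$ has strictly more sinks than $d_{r,\mathcal{P}}(E)$. Since for any row-finite graph $F$ the socle of $L(F)$ decomposes as a direct sum of simple two-sided ideals, with one summand $L(F)wL(F)$ (isomorphic to a matrix ring over $K$) for each sink $w$ of $F$, the number of sinks is a ring isomorphism invariant, and the two Leavitt path algebras cannot be isomorphic. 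The principal technical obstacle lies in the telescoping verifications for $\phi$ and $\psi$, which hinge on the CK2 relations at the chain vertices together with CK2 at $v_0$ (for $\phi$) and CK2 at $v_j$ in $E_r(\mathcal{P})$ (for $\psi$); these collectively demand precisely that the sinks of $E$ are not in-split by $\mathcal{P}$, which is the condition of properness.
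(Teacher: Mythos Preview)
Your proposal is essentially correct, and both directions are handled by arguments that are valid for Leavitt path algebras. However, your route differs from the one the paper indicates.

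For the ``if'' direction, the paper defers to the proof of \cite[Theorem 5.3]{Flow} adapted using the techniques of Theorem~\ref{outdelaysme}. Concretely, that means: build a single $E_r(\mathcal{P})$-family inside $L(d_{r,\mathcal{P}}(E))$, invoke the Universal Homomorphism Property to obtain one map, prove injectivity via the $K^*$-gauge action and \cite[Theorem 1.8]{AbAnhLouP} (including the extension-of-scalars trick for finite fields), and then verify surjectivity by hand. You instead construct explicit mutually inverse homomorphisms $\phi$ and $\psi$, bypassing the gauge-uniqueness machinery entirely. Your approach is more elementary and self-contained, at the cost of a second round of CK-relation verifications; the paper's approach is quicker to write down but imports nontrivial uniqueness technology. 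Both are legitimate, and your telescoping computations (the CK2 collapses along the chain vertices) are exactly the right engine.

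For the ``only if'' direction, your socle argument is sound in spirit but slightly overstated as written. The decomposition of ${\rm soc}(L(F))$ into minimal two-sided ideals is indexed not just by sinks but by sinks \emph{together with} the infinite tails of line points; you should note that the in-split and in-delay constructions, being built from the same underlying $E$, contribute the same number of infinite-tail summands, so that the discrepancy in the count of minimal socle ideals comes precisely from the extra $m(v)-1$ sinks created at each improperly-split sink $v$. With that clarification, the number of minimal two-sided ideals in the socle is the isomorphism invariant you want, and the argument goes through. The paper, by contrast, simply transports the Bates--Pask argument for the $C^*$-case; your socle approach is a genuinely algebraic alternative that avoids any appeal to the $C^*$-literature.
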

\begin{proof}
The proof is analogous to the proof of \cite[Theorem 5.3]{Flow}, using the
arguments of the proof of Theorem \ref{outdelaysme}.
\end{proof}

Applying Theorem \ref{insplitindelay} and Theorem \ref{indelayme},
we get the following analog to \cite[Corollary 5.4]{Flow}, which in
turn gives a generalization of Proposition \ref{insplittingprop}.
(Note that the hypotheses of Proposition \ref{insplittingprop}
include that $E$ contains no sinks, so that every partition of $E^1$
is vacuously proper.)

\begin{corol} \label{insplitme}
Let $K$ be an infinite field.  Let $E$ be a row-finite graph, $\mathcal{P}$ a partition of $E^1$
and $E_r ( \mathcal{P} )$ the in-split graph formed from $E$ using
$\mathcal{P}$.  Then $L ( E_r ( \mathcal{P} ) )$ is Morita
equivalent to $L ( E )$ if and only if $\mathcal{P}$ is proper.
\end{corol}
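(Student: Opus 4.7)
The plan is to deduce the corollary by chaining Theorems \ref{insplitindelay} and \ref{indelayme} for the forward direction, and then rule out the non-proper case via a $\mathcal{V}^*$-monoid argument based on counting simple projective modules.

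For the forward direction, assume $\mathcal{P}$ is proper, and construct the Drinen range-vector $d_{r,\mathcal{P}}$ described in the paragraph preceding Theorem \ref{insplitindelay}. Theorem \ref{insplitindelay} then supplies a $K$-algebra isomorphism $L(E_r(\mathcal{P})) \cong L(d_{r,\mathcal{P}}(E))$, and Theorem \ref{indelayme} supplies a Morita equivalence $L(d_{r,\mathcal{P}}(E)) \sim_M L(E)$. Composing these yields $L(E_r(\mathcal{P})) \sim_M L(E)$.

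For the converse, suppose $\mathcal{P}$ is not proper. Then some sink $v$ of $E$ satisfies $m(v) \ge 2$, and by construction the vertices $v_1,\ldots,v_{m(v)}$ of $E_r(\mathcal{P})$ arising from $v$ are all sinks, while every other sink of $E$ produces exactly one sink of $E_r(\mathcal{P})$; hence $E_r(\mathcal{P})$ has strictly more sinks than $E$. For any row-finite graph $G$ and any sink $w$ of $G$, the left ideal $L(G)w$ is a simple projective $L(G)$-module whose endomorphism ring $wL(G)w = Kw$ is one-dimensional; moreover, for distinct sinks $w \ne w'$ one has $wL(G)w' = 0$ (any nonzero monomial $w\mu\nu^*w'$ would force $\mu = w$ and $\nu = w'$, yet then $s(\mu) \ne s(\nu)$, a contradiction), so $L(G)w \not\cong L(G)w'$. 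Since Morita equivalence preserves isomorphism classes of simple projective modules with prescribed endomorphism ring, and since $E_r(\mathcal{P})$ has strictly more such classes than $E$, we conclude $L(E) \not\sim_M L(E_r(\mathcal{P}))$.

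The main obstacle will be the module-theoretic step: establishing simplicity of $L(G)w$ for $w$ a sink. Given $0 \ne x \in L(G)w$, one expands $x$ in the standard basis of paths $\mu$ with $r(\mu) = w$, selects a summand $\mu$ of maximal length, and left-multiplies $x$ by $\mu^*$; the CK1 relations collapse all other terms, producing a nonzero scalar multiple of $w$ in $L(G)x$, whence $L(G)x = L(G)w$. Once this is in place and the counting of sinks is verified to be a genuine $\mathcal{V}^*$-monoid invariant, the conclusion follows.
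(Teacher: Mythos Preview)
Your forward direction is correct and exactly matches the paper's approach: chain the isomorphism of Theorem~\ref{insplitindelay} with the Morita equivalence of Theorem~\ref{indelayme}.

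For the converse, there is a genuine gap. You correctly show that each sink $w$ of a row-finite graph $G$ yields a simple projective module $L(G)w$ with endomorphism ring $K$, and that distinct sinks give non-isomorphic such modules. But your counting argument needs more: to conclude that $L(E_r(\mathcal P))$ has \emph{strictly more} isomorphism classes of simple projectives with endomorphism ring $K$ than $L(E)$, you must know that the sinks account for \emph{all} such classes (or at least that the non-sink contributions match up). You do not prove this, and it is actually false for row-finite graphs in general: if $G$ is the infinite path $v_0 \to v_1 \to v_2 \to \cdots$, then $G$ has no sinks, yet $L(G)\cong M_\infty(K)$ has (up to isomorphism) exactly one simple projective module with endomorphism ring $K$. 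More generally, infinite sinkless tails produce simple projectives not coming from sinks. Thus from ``$E_r(\mathcal P)$ has more sinks than $E$'' alone you cannot deduce the desired inequality of Morita invariants. Your closing sentence flags ``verifying the counting of sinks is a genuine $\mathcal V^*$-monoid invariant'' as a remaining step, but this is precisely where the argument breaks: the number of sinks is \emph{not} recoverable from $\mathcal V^*(L(G))$ alone (the single sink and the infinite path both have $\mathcal V(L(G))\cong\mathbb N$).

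To repair the converse you would need either a complete classification of simple projectives over both algebras (showing the non-sink contributions coincide under in-splitting), or a different invariant altogether. One cleaner route is to compare the graph monoids $M_E$ and $M_{E_r(\mathcal P)}$ directly using \cite{AMFP}, and to exhibit an explicit monoid-theoretic obstruction arising from the independent generators $[v_1],\ldots,[v_{m(v)}]$ that a non-properly-split sink $v$ creates. The paper itself does not spell out the converse; it simply cites Theorems~\ref{insplitindelay} and~\ref{indelayme} and defers to the analogous $C^*$-argument in \cite[Corollary 5.4]{Flow}, so some work is genuinely left to the reader either way.
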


 Having now finished the work of extending Propositions  \ref{sourceelimnationprop},
\ref{expansionprop}, and \ref{insplittingprop} to wider classes of
graphs, we note here for completeness that, by \cite[Theorem 2.8]{AbAnhLouP},  Proposition \ref{outsplittingprop} extends verbatim to row-finite graphs as well.

\medskip

We conclude this article by analyzing the relationship between the
Leavitt path algebra $L(E)$ of a graph $E$ and the Leavitt path
algebra  $L(E^t)$ of its transpose graph $E^t$.   An easy example shows that in general these
two algebras need not be Morita equivalent.  For instance, if $E$ is
the graph
$$ E \ \ = \ \ \xymatrix{ {\bullet} & {\bullet} \ar[r] \ar[l]  & {\bullet}}$$
then
$$E^t \ \ =  \ \ \xymatrix{ {\bullet} \ar[r]& {\bullet}    & {\bullet} \ar[l] }$$
By \cite[Proposition 3.5]{AAS1} we get that $L_K(E)\cong {\rm M}_2(K) \oplus {\rm
M}_2(K)$, while $L_K(E^t)\cong {\rm M}_3(K)$; these two algebras are
not Morita equivalent.

Indeed, we can find a finite graph $E$ having neither sinks nor sources for which $L(E)$ and $L(E^t)$ are not Morita equivalent.  Specifically, consider the graph $E$
$$
{
\def\labelstyle{\displaystyle}{E} \ = \ \quad
\xymatrix{ \bullet^{v_1}\dloopr{}\uloopr{} &  \bullet^{v_2}
\ar[l]
\uloopd{} \ar[l] }}
$$

whose transpose graph $E^t$ is

$$
{
\def\labelstyle{\displaystyle}{E^t} \ = \ \quad
\xymatrix{ \bullet^{v_2}\dloopd{} &  \bullet^{v_1} \ar[l]
\uloopr{}\dloopr{} \ar[l] }}
$$

Then $v_1\in E^0$ generates the unique proper graded
two-sided
ideal of $L_K(E)$, and the quotient ring $L_K(E)/\langle v_1\rangle$ is isomorphic to $K[x,x^{-1}]$. Thus, $L(E)$ has no purely infinite simple
unital quotients. Since $E$ contains loops, \cite[Theorem
2.8]{APsr}
implies that the stable rank $\mbox{sr}(L(E))$ equals $2$. On the other hand, $v_2\in
(E^t)^0$ generates a proper graded two-sided ideal in $L(E^t)$, whose
quotient ring $L(E^t)/\langle v_2\rangle$ is isomorphic to the Leavitt algebra $L_K(1,2)$. Thus
$\mbox{sr}(L(E^t))=\infty$ by \cite[Theorem 2.8]{APsr}. But the
stable rank is a Morita invariant for unital Leavitt path
algebras
of row-finite graphs \cite[Remark 3.4(1)]{APsr}, so that $L(E)$
and
$L(E^t)$ cannot be Morita equivalent.

However, in contrast to the previous two examples, we get the following consequence of Theorem \ref{Th:Almost BigFish}.

\begin{prop}\label{C:transpose graph}
If $E$ is a finite graph without sources such that $L(E)$ is a
purely infinite simple Leavitt path algebra, then $L(E)$ and
$L(E^t)$ are Morita equivalent.
\end{prop}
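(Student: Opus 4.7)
The plan is to deduce this directly from Theorem \ref{Franksinvariantsufficient} by verifying that the determinant Franks pairs of $E$ and $E^t$ coincide. The argument splits into three routine verifications, all of which are flagged as easy observations earlier in the paper; no genuinely new computation is required.

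First, I would check that $L(E^t)$ is also purely infinite simple unital and that $E^t$ has no sources. Since $L(E)$ is purely infinite simple unital and $E$ has no sources, Lemma \ref{graphconditions} tells us $E$ is finite, irreducible, essential, and nontrivial. Transposing edges preserves each of these properties: a path from $v$ to $w$ in $E$ becomes a path from $w$ to $v$ in $E^t$ (irreducibility), ``no sources and no sinks'' is self-dual (essentiality), and $E$ is a single cycle if and only if $E^t$ is (nontriviality). Applying Lemma \ref{graphconditions} in the other direction then yields that $L(E^t)$ is purely infinite simple unital and $E^t$ has no sources.

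Next I would match the two Franks invariants. Using $A_{E^t}=(A_E)^t$ and the observation recorded after Corollary \ref{flowsequence} that $\det(I_n-A)=\det(I_n-A^t)$ for any square matrix $A$, we get
\[
\det(I_n-A_{E^t}^t)=\det(I_n-A_E)=\det(I_n-A_E^t).
\]
Similarly, via the isomorphism $K_0(L(F))\cong \operatorname{coker}(I-A_F^t)$ for any such graph $F$, together with the Smith normal form observation $\operatorname{coker}(I_n-A)\cong \operatorname{coker}(I_n-A^t)$ also recorded there, we obtain
\[
K_0(L(E^t))\cong \operatorname{coker}(I-A_{E^t}^t)=\operatorname{coker}(I-A_E)\cong \operatorname{coker}(I-A_E^t)\cong K_0(L(E)).
\]
Thus $\mathcal{F}_{det}(E)\equiv \mathcal{F}_{det}(E^t)$.

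Finally, with both algebras purely infinite simple unital and with equivalent determinant Franks pairs, Theorem \ref{Franksinvariantsufficient} applies and gives $L(E)\sim_M L(E^t)$. There is no real obstacle here; the only point that requires a moment's care is the self-duality of the ``irreducible, essential, nontrivial'' conditions under transposition, and even this is immediate from the definitions.
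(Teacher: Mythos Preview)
Your proposal is correct and follows essentially the same route as the paper: verify via Lemma \ref{graphconditions} that the irreducible/essential/nontrivial conditions pass to $E^t$, match the determinant and $K_0$ invariants using the Smith normal form and determinant-transpose observations, and then invoke Theorem \ref{Franksinvariantsufficient}. The paper's proof is slightly terser but the logical structure is identical.
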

\begin{proof}
There is an isomorphism $coker(I-A_E^t) \cong coker(I-A_{E^t}^t) =
coker(I-A_E)$, since the Smith normal forms of $I-A_E^t$  and
$I-A_E$ are equal. Furthermore, cofactor expansions clearly yield
${\rm det}(I - A_E^t) = {\rm det}(I - A_{E^t}^t) = {\rm det}(I -
A_{E}).$ Thus we have ${\mathcal F}_{det}(E) \equiv {\mathcal
F}_{det}(E^t)$.   By Lemma \ref{graphconditions} we have that $E$ is
irreducible, essential, and nontrivial.  But these three conditions
on a graph are easily seen to pass to the transpose graph $E^t$, so
that (again by Lemma \ref{graphconditions}) we have that $L(E^t)$ is
purely infinite simple.   Thus Theorem
\ref{Franksinvariantsufficient} applies to yield the result.
\end{proof}

The result of Proposition \ref{C:transpose graph} does not extend to
isomorphisms, as the following example demonstrates.

\begin{exem}\label{Ex:transp not iso}
{\rm Consider the graph $E$ whose incidence matrix is
$$
A_E=\left(
\begin{array}{ccc}
1  & 1 & 1  \\
 0 &  0 &  1 \\
 1 & 0 & 0 \\
\end{array}
\right).
$$
Then $E$ is a graph with no sources, for which $L(E)$ is purely
infinite simple.   It is not hard to show (see e.g.
\cite[pp. 67-68]{T1}) that $K_0(L(E)) = \Z_2$, and $[1_E] = [1]$ in
$\Z_2$.   On the other hand, a similarly easy computation yields
that $K_0(L(E^t)) = \Z_2$ as well, but $[1_{E^t}] = [0]$ in $\Z_2$.
Since, as noted above, an isomorphism between Leavitt path algebras
yields an equivalence of the corresponding unitary Franks pairs, we
conclude that $L(E)\not\cong L(E^t)$.}
\end{exem}

\section*{Acknowledgments}

The authors thank the referee for a very thorough and careful reading of the original version of this article.   Part of this work was done during a visit of the third author to the
Department of Mathematics of the University of Colorado at Colorado
Springs. The third author thanks both the Department of
Mathematics of UCCS and the Department of Mathematics of The
Colorado College for their kind hospitality.


\begin{thebibliography}{00}

\bibitem{AA1} \textsc{G. Abrams, G. Aranda Pino}, The Leavitt path algebra of a graph,
\emph{J. Algebra} \textbf{293} (2005), 319--334.

\bibitem{AA2} \textsc{G. Abrams, G. Aranda Pino}, Purely infinite simple Leavitt path
algebras, \emph{J. Pure Appl. Algebra}, \textbf{207} (2006), 553--563.

\bibitem{AAS1} \textsc{G. Abrams, G. Aranda Pino, M. Siles Molina},  Finite dimensional
Leavitt path algebras, \emph{J. Pure Appl. Algebra}  \textbf{209}(3) (2007), 753--762.

\bibitem{AbAnhLouP} \textsc{G. Abrams, P. N. \'{A}nh, A. Louly, E. Pardo}, The
classification question for Leavitt path algebras, \emph{J. Algebra} \textbf{320}
(2008), 1983--2026.

\bibitem{AS}  \textsc{G. Abrams, C. Smith},  Explicit isomorphisms between purely infinite simple Leavitt path algebras, in preparation.

\bibitem{AGP} \textsc{P.Ara, K. Goodearl, E. Pardo}, $K_0$ of purely infinite
simple regular rings, \emph{K-Theory}  \textbf{26} (2002), 69--100.

\bibitem{AMFP} \textsc{P. Ara, M.A. Moreno, E. Pardo}, Nonstable K-Theory for graph algebras,
\emph{Algebr. Represent. Theory} \textbf{10} (2007), 157--178.

\bibitem{APsr} \textsc{P. Ara, E. Pardo}, Stable rank of Leavitt
path algebras, \emph{Proc. Amer. Math. Soc.} \textbf{136}(7)  (2008),
 2375--2386.

\bibitem{APS} \textsc{G. Aranda Pino, E. Pardo, M. Siles Molina}, Exchange Leavitt
path algebras and stable rank, \emph{J. Algebra} \textbf{305}
(2006), 912--936.

\bibitem{Flow} \textsc{T. Bates, D. Pask}, Flow equivalence of graph
algebras, \emph{Ergodic Theory Dynam. Systems} \textbf{24} (2004), 367--382.

\bibitem{B-F} \textsc{R. Bowen, J. Franks},  Homology for
zero-dimensional nonwandering sets, \emph{Ann. of Math.} \textbf{106}
(1977), 73--92.

\bibitem{CK}  \textsc{J. Cuntz, W. Krieger}, A class of C$^*$-algebras and topological Markov chains, \emph{Invent. Math.} \textbf{56} (1980), 251--268.

\bibitem{d} \textsc{D. Drinen}, Flow equivalence and graph groupoid
isomorphism, \emph{Dartmouth College} (2001), preprint.

\bibitem{Franks} \textsc{J. Franks}, Flow equivalence of subshifts
of finite type, \emph{Ergodic Theory Dynam. Systems} \textbf{4} (1984),
53--66.

\bibitem{GS}  \textsc{J.L. Garc\'{\i}a, J.J. Sim\'{o}n},  Morita equivalence for idempotent rings, \emph{J. Pure Appl. Algebra} \textbf{76}(1)  (1991), 39–-56.

\bibitem{Kirch}
\textsc{E. Kirchberg}, The classification of purely infinite C*-algebras using Kasparov theory,
preprint.

\bibitem{Huang2} \textsc{D. Huang}, Flow equivalence of reducible shifts of finite type,
\emph{Ergodic Theory Dynam. Systems} \textbf{14} (1994), 695--720.

\bibitem{Huang} \textsc{D. Huang}, Automorphisms of Bowen-Franks
groups of shifts of finite type, \emph{Ergodic Theory Dynam. Systems}
\textbf{21} (2001), 1113--1137.

\bibitem{L-M}\textsc{D. Lind, B. Marcus}, ``An Introduction to Symbolic Dynamics and
Coding''. Cambridge Univ. Press, Cambridge, 1995, Reprinted 1999
(with corrections). ISBN 0-521-55900-6.

\bibitem{P-S} \textsc{W. Parry, D. Sullivan}, A topological
invariant for flows on one-dimensional spaces, \emph{Topology}
\textbf{14} (1975), 297--299.

\bibitem{Phil}
\textsc{N.C. Phillips}, A classification theorem for nuclear purely infinite simple C*-algebras,
\emph{Doc. Math.} \textbf{5} (2000), 49--114.

\bibitem{R} \textsc{I. Raeburn}, ``Graph algebras". CBMS Regional Conference
Series in Mathematics \textbf{103},
American Mathematical Society, Providence, 2005. ISBN 0-8218-3660-9

\bibitem{Rordam} \textsc{M. R{\o}rdam}, Classication of
Cuntz-Krieger algebras, \emph{K-Theory} \textbf{9} (1995), 31--58.

\bibitem{T1} \textsc{M. Tomforde}, Structure of graph C$^*$-algebras and
generalizations, in ``Graph
algebras: bridging the gap between analysis and algebra", G. Aranda
Pino, F. Perera, M. Siles Molina (eds.), Universidad de M\'{a}laga
Press, 2007. ISBN 978-84-9747-177-0.

\bibitem{W} \textsc{R.F. Williams}, Classification of subshifts of
finite type, \emph{Ann. of Math.}  \textbf{98}(2) (1973), 120--153.

\end{thebibliography}
\end{document}